\newtheorem{theorem}{\bf Theorem}[section] 
\newtheorem{prop}[theorem]{\bf Proposition} 
\newtheorem{corol}[theorem]{\bf Corollary}
\newtheorem{lemma}[theorem]{\bf Lemma} 
\newtheorem{appl}[theorem]{\bf Application}
\def\cc#1{\hfill\ $#1$\ \hfill} 
\def\tvi{\vrule height 13pt depth 4 pt width 0.2pt} 
\def\tvj{\vrule height 15pt depth 7 pt width 0.2pt}
\newcommand{\abs}[1]{\lvert#1\rvert}
\newcommand{\nd}{{\text{ and }}}
\newcommand{\sm}{{\smallsetminus}}
\newcommand{\lb}{{\lambda}} 
\newcommand{\va}{{\varepsilon}}
\newcommand{\A}{{\mathbb A}}
\newcommand{\cA}{{\mathcal A}} 
\newcommand{\cB}{{\mathcal B}} 
\newcommand{\cF}{{\mathcal F}} 
\newcommand{\cP}{{\mathcal P}} 
\newcommand{\cW}{{\mathcal W}}
\begin{document}

\title{On lattices  of maximal index two} 
\author{Anne-Marie Berg\'e } 
\keywords{Euclidean lattices, perfection
\newline 
  Universit\'e de Bordeaux, UMR 5251, Bordeaux, F-33000, France }
\begin{abstract} 
The maximal index of a Euclidean lattice $L$ of dimension $n$ 
is the maximal index of the sublattices of $L$ spanned by $n$
independent minimal vectors of $L$. In this paper, we prove that a
perfect lattice of maximal index two which is not provided by a cross-section
has dimension at most $5$.
\end{abstract} 
\maketitle

\section{Introduction}\label{secintro}

Korkine and Zolotareff proved that an $n$-dimensional
lattice  containing at least 
$\frac{n(n+1)}2$ pairs $\pm x$ of minimal vectors, and spanned by
any subset of $n$ independent minimal vectors, is similar 
to the root lattice $\A_n$.

Here we consider in an  $n$-dimensional  Euclidean space $E$
well rounded lattices, i.e. lattices $L$ the minimal vectors
of   which span $E$. To such a lattice $L$, Martinet
attached some invariants related to the sublattices $M$ of $L$ generated 
by $n$ independant minimal vectors of $L$, in particular 
the set  of possible indices $[L:M]$, and for a given sublattice $M$, the
group structure of the quotient $L/M$.

The  \emph{maximal  index }  of $L$ is :
$$\max_{M}[L:M]\,,$$
where $M$ runs through  sublattices of $L$ spanned by $n$ 
 independent minimal vectors of $L$. 
(Korkine-Zolotareff's result deals with lattices of maximal index $1$.)

In this paper, we consider lattices with maximal index $2$. For such
lattices, the notion of length introduced in [M] can be defined
as follows:

The \emph {length  $\ell\le n$  of a lattice $L$ of maximal index $2$}
is the minimal cardinality
$|X|$  of a set $X$ of
independent  minimal vectors of $L$
 such that $ \sum_{x\in X}x\equiv 0 \mod 2L$.

Up to dimension $7$, there are six perfect lattices with maximal 
index $2$:  in Conway-Sloane's notation (see [C-S] p. 56),
$P_4^1$ and $P_5^1$ have length $\ell=4$, while
$P_5^2$, $P_6^5$, $P_6^6$ and $P_7^{32}$ have length $\ell=5$.
In dimension~$8$, a computation by Batut and Martinet based on the
classification result by Dutour-Sch\"urmann-Vallentin (see [D-S-V])
showed  that no $8$-dimensional perfect lattice has maximal index $2$.

In [M], Martinet conjectured that \emph{ a perfect lattice of maximal
  index~$2$, generated by its minimal vectors, has dimension at most
  $7$.}

In the present work, we prove this conjecture in the case $\ell=n$.
\vskip.1cm 
\begin{theorem}\label{noperf}
A lattice of dimension $n\ge 6$,
of maximal index $2$ and length $\ell=n$, has less than $\frac{n(n+1)}2$
  pairs $\pm x$ of minimal vectors, and in particular is not perfect.
\end{theorem}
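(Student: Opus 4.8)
The plan is to bound directly the number $s$ of pairs of minimal vectors of $L$ and to show $s<\tfrac{n(n+1)}2$; since an $n$-dimensional perfect lattice must have $s\ge\tfrac{n(n+1)}2$ (the $\tfrac{n(n+1)}2$ rank-one symmetric forms attached to its minimal vectors have to span the space of symmetric bilinear forms), this gives the theorem.

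\emph{Set-up.} Write $N=\min L$ and fix $n$ independent minimal vectors $x_1,\dots,x_n$ realizing the length, so $x_1+\cdots+x_n\in2L$; put $M=\Z x_1\oplus\cdots\oplus\Z x_n$ and $e=\tfrac12(x_1+\cdots+x_n)\in L$. The relation forces $[L:M]\ge2$ and the maximal index forces $[L:M]\le2$, so $[L:M]=2$, $L=M\sqcup(e+M)$, and since $2e\in M$ negation preserves each part; thus $s=s_M+s_e$ with $s_M$ the pairs inside $M$ and $s_e$ those inside $e+M$. For every $\va=(\va_i)\in\{\pm1\}^n$ the vector $\tfrac12\sum_i\va_ix_i$ lies in $L$ (it is congruent to $e$ modulo $2L$), so the Gram matrix $G=(x_i\cdot x_j)$ satisfies $\va^tG\va\ge4N$ for all such $\va$; together with $\abs{x_i\cdot x_j}\le N/2$ (equality forcing $x_i\mp x_j$ minimal) and $\|v\|^2\ge N$ for $v\in L\sm\{0\}$, this is the analytic input. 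The length hypothesis reads: the only $\mathbb F_2$-linear relation among $\ov{x_1},\dots,\ov{x_n}$ in $L/2L$ is $\sum_i\ov{x_i}=0$; maximal index $2$ will be used in the stronger form that no $n$ independent minimal vectors of $L$ span a sublattice of index $>2$.

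\emph{Counting inside $M$.} Two minimal vectors of $M$ that are congruent modulo $2M$ and not opposite would average to a nonzero vector of $L$ of norm $<N$, which is impossible; so a pair inside $M$ is determined by its residue in $\mathbb F_2^n$. Expanding $\|y\|^2=N$ for $y=\sum a_ix_i$ and feeding in the inequalities on $G$, I would show that the configuration of the $x_i$ is so rigid that $M$ carries far fewer than $\tfrac{n(n+1)}2$ minimal vectors: after suitable sign changes the $x_i$ break into mutually orthogonal blocks on which $G$ is a root-type form of small size; a single large ``$\A_k$-block'' with $k$ close to $n$ is excluded because $\ell=n$ and the overlattice structure of $\A_k$ forbid it (its only index-$2$ overlattice either lowers the minimum or already produces a relation shorter than $n$), while blocks of type $D$ or $E$ are excluded because they would yield $n$ independent minimal vectors spanning a sublattice of index $>2$; and the family of inequalities $\va^tG\va\ge4N$ keeps the blocks small and forces at least four of them, giving $s_M\le\binom{n-2}2+O(1)$, which is $<\tfrac{n(n+1)}2$. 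This step is the heart of the proof and the main obstacle: a priori the $x_i\cdot x_j$ are only bounded, not quantized, so eliminating the generic Gram matrices and the extremal rigid ones calls for a finite but delicate case analysis.

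\emph{Counting inside $e+M$, and conclusion.} A minimal vector of $e+M$ has the form $\tfrac12\sum_i\va_ix_i$ with all $\va_i$ odd and $\sum_{i,j}\va_i\va_j\,x_i\cdot x_j=4N$; with the Gram matrix now known, this forces $\va_i=\pm1$ and, block by block, only $O(n)$ admissible patterns survive (roughly: constant sign on each block, with the number of blocks pinned down), so $s_e=O(n)$, small against the slack left by the $s_M$ estimate once $n\ge6$. Hence $s=s_M+s_e<\tfrac{n(n+1)}2$, and $L$ is not perfect. The bounds are genuinely tight: in dimensions $4$ and $5$ the lattices $P_4^1$ and $P_5^1$ already reach or exceed $\tfrac{n(n+1)}2$, so the contribution of $e+M$ must be tracked precisely rather than discarded, and the argument only closes from $n=6$ onward.
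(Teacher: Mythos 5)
Your global strategy (split $S(L)$ into the part in $M$ and the part in the coset $e+M$, bound each, and compare with the perfection bound $\tfrac{n(n+1)}2$) matches the paper's, but both halves of your count rest on claims you do not establish. For the part inside $M$, you try to control $s_M$ by a rigidity analysis of the Gram matrix, hoping that the inequalities $\va^tG\va\ge 4N$ and $\abs{x_i\cdot x_j}\le N/2$ force a decomposition into orthogonal root-type blocks. As you yourself concede, the off-diagonal entries are only bounded, not quantized, and the hypotheses (maximal index $2$, length $n$) are arithmetic conditions on $L$ that impose no such metric rigidity; there is no finite list of Gram matrices to eliminate, so this ``heart of the proof'' is a genuine gap, not a deferred computation. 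The correct route, which the paper takes (citing Proposition 2.1 of [M]), is that the hypotheses already force $S(M)=\{\pm x_1,\dots,\pm x_n\}$: a minimal vector $\sum_{i\in J}\pm x_i$ with $2\le\abs{J}\le n-1$ would combine with the $x_i$, $i\notin J$, into fewer than $n$ independent minimal vectors summing to $0$ modulo $2L$, contradicting $\ell=n$, while a coefficient of absolute value $\ge 2$ would yield $n$ independent minimal vectors spanning a sublattice of index $>2$. So $s_M=n$ exactly, and the entire difficulty lives in the coset $e+M$.

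For that coset, your claim that only $O(n)$ sign patterns $\va\in\{\pm1\}^n$ survive is deduced from the block structure you did not obtain, and it is far stronger than what can be proved: the whole body of the paper is a combinatorial analysis of the index sets $I$ of the vectors $e-\sum_{i\in I}e_i$, grading them by the type $p=\abs{I}$ and bounding the number $t_p$ of each type by roughly $p+6$ (families without $3$-cycles for the weight-two relation) or $n$ (families with $3$-cycles), via repeated use of the principle that any independent family of minimal vectors summing to $0$ mod $2L$ must have exactly $n$ elements. The resulting bound on $s$ is asymptotically $2n^2/9$ --- quadratic, not linear --- and showing that this still falls below $\tfrac{n(n+1)}2$ requires summing the $t_p$ carefully (the case $n=6$ even needs an extra argument showing $t_3=4$ is incompatible with $t_1+t_2=9$). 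As written, your proposal identifies the right target and the right coset decomposition, but both estimates remain unproved assertions, and the one for $e+M$ is quantitatively false as stated.
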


Actually, we shall obtain in \ref{theokissing} an
 asymptotic bound 
$$s\le \frac{2n^2}9$$
for the number $s$ of
pairs of minimal vectors much smaller than the (lower) perfection bound
$\frac{ n^2} 2$. 

\section{Notation}

Let $L$ be a lattice of dimension $n\ge 6$, maximal index $2$ 
and  length~$n$. Let $S=S(L)$ and $s(L)=\frac{\abs{S(L)}}2$ denote
the set and number of pairs $\pm x$ of minimal vectors of $L$.

Let $L_0\subset L$ be a sublattice of index $2$ generated
by $n$ independent minimal vectors $e_1,\dots e_n$ of $L$.
We  have $L=\langle L_0,e\rangle$, where, by possibly reducing $e$
modulo $L_0$, and using the definition of the length, we may prescribe
$$e=\frac{e_1+\cdots +e_n}2\,.$$

The hypotheses on the maximal index and the length of $L$ imply
that the minimal vectors of $L_0$  are 
just the $\pm e_i$, and that 
the other possible minimal vectors of $L$ 
are of the form
$$\frac{\pm e_1\pm e_2 \pm\cdots\pm e_n}2\,.$$
 (See [M], Proposition 2.1.) 
In order to prove 
Theorem \ref{noperf}, we may and shall assume that $s(L)\ge n+1$,
and in particular,
by negating some $e_i$ if necessary, \emph {we shall
suppose $e$ itself minimal} (unless otherwise specified in Section
$3$). The next sections are devoted to the other minimal vectors
$x\in S(L)\sm S(L_0)$,
that we represent 
by their  set $I$ of minus signs:
$$x=x_I=e-\sum_{i\in   I}e_i\,.$$
\medskip

We call \emph{ type of $x$} the number $|I|$   of minus signs 
in the expression of $x$ ($e$ is of type $0$).  
Of course the types of $x$ and $-x$ add to $n$, therefore
by possibly negating $x$ we shall suppose 
$|I|\le \frac n 2$, and if $|I|=\frac n2$ we
shall prescribe $1\in I$.
[{\small The index set $I$
associated to the minimal vector $x$, and {\sl a fortiori}  its type,  
depend on the 
choice of $e\in L\sm L_0$.}]
\medskip

 The following notation is relative to a given set
of  $r\ge 3$ minimal vectors $x_1, x_2,\dots, x_r$ in
$L\setminus L_0$ identified to their index sets $I_1,I_2,\dots I_r$
$$x_k=x_{I_k}=e-\sum_{i\in I_k}e_i, \quad I_k\subsetneqq \{1,\dots ,n\},
\quad |I_k|\le \frac n2\,.$$
We denote by  
$$m=|\cup_{k}I_k|\quad (m\le n)$$ the number of indices
involved in the expression of the $x_k$. Actually, we may and shall suppose
that $$\bigcup_{k}I_k=\{1,2,\dots,m\}\,.$$
For 
$i=1,\dots,n$ we call \emph {weight} of $i$ the number
$w(i)=0,\dots,r$ of 
subsets $I_k$ it belongs to; we thus have
$$\sum_{k=1}^rx_k=r\,e-\sum_{i=1}^n w(i)e_i\,.\eqno(1)$$
We also introduce the partition of  $\cup I_k=\{1,\dots,m\}$ 
into sets of indices of   given weights
$$W_k=\{ i\in\cup_{k} I_k \mid w(i)=k \}\quad (1\le k\le r)\,,$$ 
that we regroup  into  the sets of indices of even and 
odd weights
$$\mathcal W_0=W_2\cup W_4\cup\dots \ \nd \ \mathcal W_1=W_1\cup W_3\dots \,.$$

\medskip

Section $3$ gives properties  about the weights in families of $3$, 
$4$ or $5$ minimal vectors; these results are  used in Sections $4$ to $7$ 
to give an upper
bound for the \emph{ number $t_p$ of   minimal vectors of a given
type $p$}.

[{\small The bounds for $t_1$, $t_2$ and $t_1+t_2$ given in Sections~$3$ and
$4$ were obtained by Martinet and the author while giving a
classification of the  six-dimensional perfect lattices based on their
maximal index, work previously done by Baranovskii and Ryshkov in [B-R].}]

Section $8$ concludes by an estimation of the
``kissing number'' $s(L)=n+t_0+t_1+\cdots +t_{\lfloor \frac n
  2\rfloor}$ of $L$ ($t_0=1$) strictly smaller than the dimension
$\frac{n(n+1)}2$ of the space of lattices.

\section{Properties of a set of minimal vectors}
\subsection{Minimal vectors of type 1}
The following property derives from the hypothesis
``\emph {no $n$ independent vectors of $L$ span a sublattice of index $3$ of
$L$}'' and does not suppose $e$ minimal.
\begin{prop}\label{proptype1} 
Suppose $n\ge 5$. Then
there exist at most four minimal vectors 
of the form $e-e_i$ (i.e. $t_1\le 4$).
\end{prop}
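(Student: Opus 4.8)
The plan is to argue by contradiction: suppose five (or more) vectors of the form $e-e_i$ are minimal, say $x_i=e-e_i$ for $i\in\{1,2,3,4,5\}$. The key quantitative tool is that $\|e-e_i\|^2 = \|e\|^2 - 2\la e,e_i\ra + \|e_i\|^2$, and since all of $e,e_i,e-e_i$ have the common minimal norm $m_0 := \min(L)$, we get $\la e,e_i\ra = \tfrac12 m_0$ for each such $i$. Likewise $\|e_i\|^2=\|e_j\|^2=m_0$. The plan is then to exploit the fact that $e = \tfrac12\sum_{j=1}^n e_j$, so that
$$ m_0 = \la e,e\ra = \tfrac12\sum_{j=1}^n \la e, e_j\ra, $$
and more usefully to look at a suitable sum of the candidate minimal vectors and bound its norm from below by $m_0$ times the number of independent vectors it splits into — i.e. to find $n$ independent minimal vectors among $\{e_i\}\cup\{e-e_j\}$ spanning a sublattice whose index in $L$ would be forced to be $\ge 3$ unless the count of type-$1$ vectors is small.

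Concretely, I would first show that if $x_i=e-e_i$ and $x_j=e-e_j$ are both minimal ($i\ne j$), then $x_i - x_j = e_j - e_i$ has norm $2m_0 - 2\la e_i,e_j\ra \ge m_0$ is automatic, but the sharper relation comes from $x_i+x_j = 2e - e_i - e_j = \sum_{k\ne i,j} e_k$; the vectors $\{e_i - e_j : \text{pairs}\}$ together with enough $e_k$ give an explicit sublattice. The cleaner route: consider the family $\{e-e_1,\, e_2, e_3,\dots, e_n\}$ when $e-e_1$ is minimal. These $n$ vectors are independent (their determinant relative to $e_1,\dots,e_n$ is $\pm\tfrac12$ up to sign, since $e-e_1 = \tfrac12(-e_1+e_2+\cdots+e_n)$), so they span a sublattice $M_1$ of index $2$ in $L_0$, hence of index $1$ in $L$ — wait, index $\tfrac{1}{2}\cdot 2=1$, so in fact $M_1 = L$. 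That is fine. Now if additionally $e-e_2$ is minimal, replace $e_2$ by $e-e_2$ in this list: the list $\{e-e_1,\, e-e_2,\, e_3,\dots,e_n\}$ has determinant computed from the matrix with rows $\tfrac12(-1,1,1,\dots,1)$, $\tfrac12(1,-1,1,\dots,1)$, then $e_3,\dots,e_n$; its determinant is the $2\times 2$ minor $\det\tfrac12\begin{pmatrix}-1&1\\1&-1\end{pmatrix}=0$ — so these are dependent. The right substitution is different: I would look for a set of $n$ independent minimal vectors, most of them among the $e-e_i$ and the rest among the $e_j$, and compute the index of the sublattice they generate; too many type-$1$ vectors forces that index to be $3$, contradicting the maximal-index-$2$ hypothesis (equivalently, the stated hypothesis that no $n$ independent vectors span an index-$3$ sublattice).

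The main obstacle, and the real content, is organizing this index computation: given a subset $J\subseteq\{1,\dots,n\}$ with $|J|=j$ and the corresponding minimal vectors $\{e-e_i : i\in J\}$, one completes to a basis-sized family $\{e-e_i : i\in J\}\cup\{e_k : k\notin J\}$ and must evaluate the absolute value of the determinant of the $n\times n$ matrix whose first $j$ rows are $\tfrac12(\mathbf 1 - 2\mathbf{1}_{e_i})$. That determinant is $|{-\tfrac12}|\cdot$(something like $|j-2|\cdot 2^{-(j-1)}$ up to sign, obtained by row-reducing the all-ones-ish block); when this equals $\tfrac13\cdot(\text{index } 1)$-type value one reads off a contradiction, and one finds the family is independent with index $[L:M] = 2^{\,?}/|j-2|$ or similar, which is an integer $\le 2$ only for $j\le 4$. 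So the plan is: (1) reduce to $n$ candidate type-$1$ vectors $e-e_i$, $i=1,\dots,r$ with $r=t_1$; (2) combine $e-e_1,\dots,e-e_r$ with $e_{r+1},\dots,e_n$ (valid since $r<n$ as $r\le n/2$... actually only $r\ge 5$ matters, and $n\ge5$); (3) compute the determinant of this family via elementary row operations — subtract row $1$ from rows $2,\dots,r$ to kill the messy entries — obtaining index $=\bigl|\tfrac{r-2}{2}\bigr|\cdot$ a power of $2$, hence the sublattice has index in $L$ equal to $r-2$ (after accounting for the factor from $e\notin L_0$); (4) conclude $r-2\le 2$, i.e. $t_1\le 4$. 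The delicate points are getting the exact power of $2$ and the exact interaction with $[L:L_0]=2$ right, and checking the independence/nonvanishing ($r\ne 2$ is automatic here since $r\ge 5$); everything else is bookkeeping. Note this argument never uses that $e$ is minimal, consistent with the proposition's remark.
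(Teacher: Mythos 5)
Your final plan is correct and is essentially the paper's own argument: both exhibit the $n$ independent minimal vectors $e-e_1,\dots,e-e_5,e_{6},\dots,e_n$ and derive a contradiction from the fact that they span a sublattice of index $3$ in $L$, which is forbidden by the maximal-index-$2$ hypothesis. The paper merely shortcuts your determinant bookkeeping via the identity $\sum_{i=1}^{5}(e-e_i)-\sum_{i=6}^{n}e_i=3e$; for the record, the determinant of your family relative to the basis $e_1,\dots,e_n$ of $L_0$ is $(r-2)/2$ (not $(r-2)2^{-(r-1)}$), giving index $r-2$ in $L$ exactly as you concluded.
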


\begin{proof}
Let $x_i=e-e_i$, $i=1,\dots,5$ 
be five minimal vectors of type $1$ of $L$; using (1) we obtain
$$\sum_{i=1}^5 x_i-\sum_{i=6}^ne_i=5e-\sum_{i=1}^n e_i=3e\,;$$
Clearly the $n$ vectors $x_1,\dots,x_5,e_6,\dots,e_n$ are linearly
independent, and generate
a sublattice $L'$ of index $3$ in $L$, a contradiction.
\end{proof}

\medskip

\subsection{Weights in a set of minimal vectors}

These  properties of a set of $r=3,4$ or $5$ minimal vectors
of the form $x_k=e-\sum_{i\in I_k} e_i$ make essential use of
the assumption that $\ell=n$,
i.e. that \emph{any set $X\subset S(L)$ of independent minimal 
vectors satisfying a congruence $\sum_{x\in X}x\equiv 0\ \mod 2L$
has cardinality $|X|=n$}.
We first focus on the case $r=4$, and here again $e$ is not supposed
to be minimal.
\begin{lemma}\label{lemr=4} If every set   
$I_1,I_2,I_3$ and $I_4$ contains at least 
one index of weight $1$, then this index is unique, and there is no
index of weight~$3$.
\end{lemma}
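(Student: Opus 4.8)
The statement to prove is Lemma \ref{lemr=4}: if each of the four index sets $I_1,\dots,I_4$ contains at least one index of weight $1$, then (a) each contains a \emph{unique} such index, and (b) no index has weight $3$. The strategy is to exploit the length hypothesis $\ell=n$ through a congruence mod $2L$. The basic observation is that $2e=e_1+\cdots+e_n \equiv 0 \bmod 2L$ only as a sum of all $n$ generators, so any shorter $\Z/2$-relation among minimal vectors is forbidden. From equation (1) with $r=4$ we get $\sum_{k=1}^4 x_k = 4e - \sum_i w(i)e_i \equiv \sum_i w(i)e_i \bmod 2L$ (since $4e = 2(e_1+\cdots+e_n) \in 2L$), so $\sum_{i} w(i)e_i \equiv 0 \bmod 2L$. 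Reducing each coefficient mod $2$, this says $\sum_{i\in \cW_1} e_i \equiv 0 \bmod 2L$, i.e. the odd-weight indices furnish a $\Z/2$-relation. By the length hypothesis, such a relation forces $|\cW_1| \in \{0, n\}$; but $|\cW_1|\le m\le n$ and, crucially, if some $I_k$ has an index of weight $1$ then $\cW_1\neq\emptyset$, so we must have $|\cW_1|=n$, meaning \emph{every} index has odd weight. In particular there are no indices of weight $2$ or weight $4$, and $m=n$.

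**Deriving the two conclusions.** Once we know every weight is odd (so $w(i)\in\{1,3\}$ for all $i$, using $r=4$), suppose for contradiction that some index has weight $3$, or that some $I_k$ contains two indices of weight $1$. I would count: $\sum_i w(i) = \sum_k |I_k| \le 4\cdot\frac n2 = 2n$. Writing $a=|W_1|$ and $b=|W_3|$, we have $a+b=n$ and $a + 3b \le 2n$, hence $b\le \frac n2$ and $a\ge \frac n2$. This already constrains things but isn't the contradiction by itself. The sharper input is the hypothesis that \emph{each} $I_k$ meets $W_1$. Consider an index $j\in W_3$, lying in say $I_1,I_2,I_3$ but not $I_4$. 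Form the vector combination replacing $x_k$'s to isolate a shorter relation: e.g. look at $x_1 - x_2 = e_{?}-e_{?}$-type differences, or better, use $x_1+x_2+x_3 = 3e - \sum w'(i)e_i$ over the sub-family $\{I_1,I_2,I_3\}$ and compare. The cleanest route: among $I_1,I_2,I_3$ the index $j$ has weight $3$ (odd), so $\sum_{i:\,w'(i)\text{ odd}} e_i \equiv 3e \equiv e \bmod 2L$ for the $r=3$ subfamily — but $e\notin 2L$, and the number of odd-weight indices here is at most $3\cdot\frac n2 < n$ worth of room... I need to be careful, but the mechanism is that a weight-$3$ index creates an odd-weight relation on a proper subfamily that is too short to be an $\ell=n$ relation, unless it degenerates, which the uniqueness claim then rules out. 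Symmetrically, if some $I_k$ had two weight-$1$ indices $i,j$, I would pair the relation $\sum_{\cW_1}e_i\equiv 0$ against removing $x_k$: the sets $I_1,\dots,\widehat{I_k},\dots,I_4$ have odd-weight set $\cW_1 \,\triangle\, I_k$ (symmetric difference shifts parities exactly on $I_k$), which would be a relation of length $|\cW_1 \triangle I_k| < n$, again contradicting $\ell=n$ unless it is empty — and emptiness forces $I_k=\cW_1=\{1,\dots,n\}$, i.e. $|I_k|=n$, impossible since $|I_k|\le n/2$ and $n\ge 6$.

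**Expected main obstacle.** The delicate point is bookkeeping the parity/symmetric-difference argument when passing between the full family $\{I_1,\dots,I_4\}$ and its three-element subfamilies: one must verify that the vector produced is genuinely a sum of \emph{minimal} vectors (the $x_k$ and some $\pm e_i$) satisfying a congruence $\equiv 0\bmod 2L$, so that the length hypothesis $\ell=n$ genuinely applies, and that the resulting index set has cardinality strictly between $0$ and $n$. The congruence $\sum_{i\in\cW_1}e_i\equiv 0$ is immediate; the subtlety is that to invoke $\ell=n$ one needs an \emph{independent} set of minimal vectors, so one completes $\{e_i : i\in\cW_1\}$ (or the relevant mix) to a full relation by the standard device $\sum_{i\in\cW_1}e_i + \sum_{i\notin\cW_1}e_i = 2e\in 2L$, which shows $\{e_i: i\in\cW_1\}$ alone satisfies $\equiv 0\bmod 2L$ and, being part of a basis, is independent — so $\ell=n$ forces $|\cW_1|=0$ or $n$. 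I expect the write-up to spend most of its effort making the "no weight $3$" and "uniqueness" steps share this single lemma-like computation, handling both by the same symmetric-difference trick applied to different sub-configurations.
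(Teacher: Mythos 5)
There is a fatal error at the very first step, and the rest of the argument inherits it. From (1) with $r=4$ the correct congruence is
$$\sum_{k=1}^4x_k+\sum_{i\in\cW_1}e_i\equiv 0 \bmod 2L$$
(since $\sum_kx_k+\sum_iw(i)e_i=4e\in 2L$ and the even coefficients can be dropped); the four vectors $x_k$ cannot be removed from this relation. Your deduction ``$\sum_kx_k\equiv\sum_iw(i)e_i$, so $\sum_iw(i)e_i\equiv 0$'' is a non sequitur, and the conclusion $\sum_{i\in\cW_1}e_i\equiv 0\bmod 2L$ is simply false: take $n=6$ and $I_1=\{1\},I_2=\{2\},I_3=\{3\},I_4=\{4\}$ (four disjoint singletons, a configuration satisfying the hypothesis and appearing in Application \ref{excompo}); then $\cW_1=\{1,2,3,4\}$ and $\tfrac12(e_1+e_2+e_3+e_4)=e-\tfrac12(e_5+e_6)\notin L$. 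Everything you derive from that congruence --- $\abs{\cW_1}\in\{0,n\}$, hence ``every index has odd weight'', $m=n$, ``no indices of weight $2$ or $4$'' --- is therefore wrong. Note in particular that weight-$2$ indices are entirely compatible with both the hypothesis and the conclusion of the lemma (e.g. $I_1=\{1,5\}$, $I_2=\{2,5\}$, $I_3=\{3\}$, $I_4=\{4\}$), and the whole relation $\sim$ studied later in the paper lives on $W_2$; a correct proof cannot possibly exclude them.

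Even the repaired congruence $\sum_kx_k+\sum_{i\in\cW_1}e_i\equiv 0$ is not usable as you intend, because the length hypothesis $\ell=n$ applies only to \emph{independent} sets of minimal vectors, and this set need not be free: in the example above $(e-e_1)+e_1-(e-e_2)-e_2=0$. The paper instead works with the complementary set $X=\{x_1,\dots,x_4\}\cup\{e_i\mid i\in\cW_0\text{ or }i\ge m+1\}$, which also satisfies a congruence mod $2L$, has cardinality $n-(\abs{W_1}-4)-\abs{W_3}\le n$, and --- this is the crux --- is shown to be free precisely by using the hypothesis that each $I_k$ contains a weight-$1$ index (the coefficient on the corresponding $e_{i_k}$ forces all $\lambda_k$ to vanish). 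Then $\ell=n$ forces $\abs{W_1}=4$ and $\abs{W_3}=0$ in one stroke. Your second section, which is supposed to extract the two conclusions, is in any case only a sketch (``I need to be careful, but the mechanism is\dots'') built on the false premise, and it nowhere verifies the independence that the length hypothesis requires.
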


\noindent
{\sl Proof }. 
From (1)  follows
$$\begin{aligned}
\sum_{k=1}^rx_k+\sum_{i\in \mathcal
  W_0}e_i&=re-\sum_{i\in\cW_1}w(i)e_i-\sum_{i\in\cW_0}(w(i)-1)e_i\\
        &=4e-\sum_{i\in W_1\cup W_2}e_i-3\sum_{i\in W_3\cup W_4}e_i
\\
        &=4e-\sum_{i=1}^me_i-2\sum_{i\in W_3\cup W_4}e_i
\,,
\end{aligned}$$
where $\sum_{i=1}^me_i=2e-\sum_{i=m+1}^n e_i$, and thus we obtain
$$
\sum_{k=1}^4x_k+\sum_{i\in \mathcal W_0}e_i-\sum_{i=m+1}^n e_i=
2e-2x\,,$$
with $x=\sum_{i\in W_3\cup W_4}e_i\in  L$. Thus  the set
$$X=\{x_1,x_2,x_3,x_4\} \cup \{e_i, i\in\mathcal W_0\text{ or } i\ge
m+1\}$$
of minimal vectors of $L$ (which does not include the vector $e$)
satisfies the  congruence
$$\sum_{k=1}^4x_k+\sum_{i\in \mathcal W_0}e_i+\sum_{i=m+1}^n
e_i \equiv 0\mod 2L\,.$$ 
Its cardinality is 
$$|X|=4+|\mathcal W_0|+(n-m)=n-(|W_1|-4)-|W_3|\,$$
where $|W_1|\ge 4$ since for $k=1,\dots,4$, $W_1\cap I_k\ne\emptyset$.
To complete the proof of the lemma, it remains to prove that 
$X$  is free. Suppose 
$$\sum_{k=1}^4\lb_k x_k+\sum_{i\in \mathcal W_0\cup
 \{m+1,\dots,n\}}\mu_i e_i=0\eqno(2)$$ 
where the $\lb_k,\mu_i$ are real numbers.
Fix $k\in\{1,\dots,4\}$; by assumption, there exists $i_k\in I_k$ of weight $1$,
hence belonging to no other $I_{h}$. With respect to 
the basis $e_1,\dots,e_n$ for
$E$ the coefficient $a_{i_k}$ of the left hand side of (2) 
on the corresponding $e_{i_k}$ reads 
$a_{i_k}=\frac {\sum_h \lb_h}2-\lb_k$. Its vanishing implies that the
$\lb_k$ have a common value $\lb$ 
satisfying $2\lb=\lb$, hence $\lb=0$. Now (2) reduces to
$\sum_{i\in \mathcal W_0\cup  \{m+1,\dots,n\}}\mu_i e_i=0$, and
 all $\mu_i$ are zero.
The set $X$  is free, which completes the proof.
\qed

\medskip

 \emph{From now on, we suppose that $e$  is a minimal vector  of $L$.}

\begin{prop}\label{lemW1} 
If  every $I_k$, $1\le k\le r$,  contains at least one index
of weight $1$, and if moreover when $r=3$ there is an index of
weight~$3$, then $r$ is equal to $3$ or $4$, the index of weight~$1$ 
in every $I_k$ is uniquely determined, and 
  for  $r=3 $ (resp. $4$) we have  $|W_3|=1$ (resp. $|W_3|=|W_4|=0$).
\end{prop}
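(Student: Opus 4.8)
The plan is to reduce the statement to Lemma \ref{lemr=4} together with a direct length argument, now exploiting that $e$ is minimal. First I would treat the case $r=3$. By hypothesis every $I_k$ meets $W_1$ and there is an index $j\in W_3$, i.e. $j\in I_1\cap I_2\cap I_3$. Using (1) with $r=3$ one gets $\sum_{k=1}^3 x_k = 3e - \sum_i w(i)e_i$; adding $\sum_{i\in\cW_0}e_i = \sum_{i\in W_2}e_i$ and recalling that on $W_1$ the weight is $1$, on $W_3$ it is $3$, I would arrange the right-hand side into $3e - \sum_{i=1}^m e_i - 2\sum_{i\in W_3}e_i$, then substitute $\sum_{i=1}^m e_i = 2e - \sum_{i>m}e_i$ to obtain a congruence $\sum_{k=1}^3 x_k + \sum_{i\in\cW_0}e_i + \sum_{i>m}e_i + 2\sum_{i\in W_3}e_i = e \pmod{2L}$. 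Since $e$ is itself minimal, moving the even term $2\sum_{i\in W_3}e_i$ to the other side produces a set $X$ of minimal vectors, \emph{including} $e$, satisfying $\sum_{x\in X}x\equiv 0 \pmod{2L}$, of cardinality $|X| = 1 + 3 + |\cW_0| + (n-m) = n - (|W_1|-3) - 2(|W_3|-1) - \dots$ Here I would have to be careful bookkeeping the weights $4,5$ if $r$ were larger, but for $r=3$ the only weights are $1,2,3$, so $|X| = n+1 - (|W_1|-3) - 2(|W_3|-1) \le n+1 - 0 - 0$; since $\ell = n$ forces $|X|\le n$ once $X$ is free, and $|W_1|\ge 3$, $|W_3|\ge 1$, we get $|W_1| = 3$ and $|W_3| = 1$, i.e. the weight-$1$ index in each $I_k$ is unique and $|W_3|=1$. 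The freeness of $X$ is checked exactly as in Lemma \ref{lemr=4}: picking for each $k$ the unique $i_k\in I_k$ of weight $1$ isolates $\lambda_k$, and the coefficient of $e$ (which is now in $X$) together with minimality of $e$ kills the remaining dependence — this is the one place the argument differs from Lemma \ref{lemr=4} and where the hypothesis "$e$ minimal" is used.

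Next I would rule out $r\ge 5$. If every $I_k$ met $W_1$ for $r\ge 5$, then in particular any four of the $x_k$ would satisfy the hypothesis of Lemma \ref{lemr=4} relative to their \emph{own} union of index sets; but one must be careful, since weights computed inside a sub-family of four differ from the global weights. The cleaner route is to run the same congruence computation directly for general $r$: from (1), $\sum_{k=1}^r x_k + \sum_{i\in\cW_0}e_i = re - \sum_{i\in W_1}e_i - \sum_{i\in W_2}(w(i)-1)e_i - \cdots$, and after collapsing the $\cW_0$ contribution and substituting $\sum_{i=1}^m e_i = 2e - \sum_{i>m}e_i$ one is left with $\sum_{k=1}^r x_k + \sum_{i\in\cW_0}e_i + \sum_{i>m}e_i \equiv (r-2)e + (\text{even combination}) \pmod{2L}$. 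For $r$ even this congruence (after adding/removing copies of $e$, using $e$ minimal) gives a free set $X$ of minimal vectors with $\sum_{x\in X}x\equiv0$ of size $|X| = r + |\cW_0| + (n-m) + \epsilon$ where $\epsilon\in\{0,1,2\}$ depending on parities; plugging $|W_1|\ge r$ and counting, $|X| = n - (|W_1|-r) - (\text{nonneg. odd-weight corrections}) + \epsilon$, and for $r\ge 6$ one finds $|X| < n$ unless all corrections vanish, but even then the congruence forces, via $\ell=n$, a contradiction because $|X|$ cannot drop below $n$ while remaining a nonempty free relation. The odd-$r$ case $r=5$ needs the extra copy of $e$: there $(r-2)e = 3e = e + 2e$, so one adds a single $e$, and the resulting set has size $5 + 1 + |\cW_0| + (n-m) + \epsilon$, which I expect to force $|X|>n$ or $|X|<n$ — either way contradicting $\ell=n$ unless the configuration is empty.

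The main obstacle, and the step I would spend the most care on, is the parity bookkeeping in the general-$r$ congruence: correctly tracking which of the terms $e$, $\sum_{i>m}e_i$, $\sum_{i\in W_3\cup W_4\cup\cdots}e_i$ land on which side modulo $2L$ as functions of $r\bmod 2$ and of the weights present, and then verifying in each case that the resulting set $X$ is simultaneously (i) nonempty, (ii) free — again by the isolation trick using the weight-$1$ indices $i_k$ plus minimality of $e$ to handle the coefficient of $e$ — and (iii) of cardinality incompatible with $\ell=n$ unless it collapses to the claimed equalities $|W_1|=r$, $|W_3|=1$ for $r=3$, and $|W_3|=|W_4|=0$ for $r=4$. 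Once the $r=3$ and $r=4$ equalities are in hand and $r\ge 5$ is excluded, the proposition follows. I would organize the write-up as: (a) the common congruence identity from (1); (b) the case $r=4$ (essentially Lemma \ref{lemr=4} refined, now with $e\in X$ giving the sharper count $|W_3|=|W_4|=0$); (c) the case $r=3$; (d) the exclusion of $r\ge5$; each of (b)–(d) ending with the freeness verification of the relevant $X$.
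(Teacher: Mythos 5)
Your treatment of $r=3$ is essentially sound and is in fact the paper's own argument unrolled: the congruence $\sum_{k=1}^3 x_k+\sum_{i\in\cW_0}e_i-\sum_{i>m}e_i-e=-2\sum_{i\in W_3}e_i$ together with freeness of $X$ gives $|X|=n+4-|W_1|-|W_3|$, hence $|W_1|=3$ and $|W_3|=1$. (Your stated count $n+1-(|W_1|-3)-2(|W_3|-1)$ is wrong but harmlessly so; also, freeness is not obtained from ``the coefficient of $e$'' --- $e$ is not a basis vector --- but from the weight-$1$ indices giving $\lb_k=a$ for all $k$ and a weight-$3$ index then giving $-2a=0$.) The paper gets the same conclusion by negating $e_1$ for $1\in W_3$ and feeding the four vectors $e,x_1,x_2,x_3$ to Lemma \ref{lemr=4}.

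The genuine gaps are in the cases $r=4$ and $r\ge 5$, which are the hard part of the proposition. For $r=4$, your claim that adjoining $e$ to $X$ ``gives the sharper count $|W_3|=|W_4|=0$'' fails: the right-hand side of the $r=4$ congruence is $2e-2\sum_{i\in W_3\cup W_4}e_i$, already even, so $e$ cannot be added to $X$ without destroying the congruence; and in the count $|X|=4+|\cW_0|+(n-m)=n+4-|W_1|-|W_3|$ the quantity $|W_4|$ cancels (it occurs in both $|\cW_0|$ and $m$). No congruence-plus-length argument of this shape can see $W_4$. The paper's proof that $W_4=\emptyset$ is a separate combinatorial step: assuming $W_4\ne\emptyset$ and applying the $r=3$ case to triples forces $|W_4|=1$, $|W_2|=|W_3|=0$, $|W_1|=4$, hence $x_k=e-e_1-e_{i_k}$, and the substitution $e\mapsto e-e_1$ then produces five minimal vectors of type $1$, contradicting Proposition \ref{proptype1}. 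Likewise, for $r\ge5$ your direct congruence yields $|X|=n+r+\rho-\sum_{j\ \mathrm{odd}}|W_j|$ with $\rho=r\bmod 2$, so $|X|=n$ merely forces $\sum_{j\ \mathrm{odd}}|W_j|=r+\rho$ (e.g.\ $|W_1|=6$ when $r=5$), which is perfectly consistent --- there is no contradiction, and freeness of $X$ is itself doubtful when $e\in X$ and no odd weight $\ge3$ occurs. The exclusion of $r\ge5$ in the paper again passes through the $r=4$ case and Proposition \ref{proptype1}, which your plan never substantively invokes; as written, the two key claims $r\le4$ and $|W_4|=0$ are not established.
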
 

\begin{proof} 
The case $r\ge 5$ follows from the case $r=4$ and Proposition
\ref{proptype1}.

(a) {\sl Case $r=3$}. By assumption, there exists 
an index of weight $3$, say $1\in I_1\cap I_2\cap I_3$.
We change $e_1$ into  $e'_1=-e_1$ and $e$ into 
$e'=e-e_1=\frac{e'_1+e_2+\cdots+e_n}2$ (not necessarily minimal), 
and we consider the four
minimal vectors $x_0=e$, $x_1$, $x_2$ and $x_3$
which, relatively to $e'$, read
$x_k=e'-\sum_{i\in I'_k}e_i$ with $I'_0=\{1\}$, and 
$I'_k=I_k\setminus \{1\}$ for $k=1,2,3$. 
The weights $w(i)$ and 
$w'(i)$ of an index $i$ relative to the sets $(I_1,I_2,I_3)$ 
and $(I'_0,I'_1,I'_3,I'_4)$
coincide, except for $i=1$: $w(1)=3$ and $w'(1)=1$. Thus the four
minimal vectors $x_i,\, 0\le i\le 3$ satisfy the hypotheses
of Lemma \ref{lemr=4}: there is no 
index $i\ge 2$ of weight~$3$, and the indices of weight~$1$ in
$I_1,I_2,I_3$ are uniquely determined, as announced.

(b) {\sl Case $r=4$}. It remains to prove that $W_4=\emptyset$. 
Otherwise, any subset of three $I_k$ should satisfy the hypotheses
of (a), hence $|W_4|=1$, 
and by considering convenient ones we should obtain
$|W_2|=0$ (if $W_2\cap I_1\cap I_2\ne \emptyset$,
we consider the subset $\{I_2,I_3,I_4\}$, where $I_2$ has too many
indices of weight one). Since  by the lemma  we already know that
$|W_1|=4$ and $|W_3|=0$,  every $I_k$ should
contain just one index of weight $1$, say $i_k$, and one index of
weight $4$, say $1$: the $x_k$ are of the form $x_k=e-e_1-e_{i_k}$,
where the $i_k\ge 2$ are pairwise distinct.
 By the same substitution $e_1\mapsto e'_1=-e_1$,  
$e\mapsto e'=e-e_1$, ($e'$ is not necessarily minimal), we
obtain five vectors of type $1$, namely  $x_0=e'-e'_1$ and the four
$x_k=e'-e_{i_k}$,  a contradiction with Proposition \ref{proptype1}.

\end{proof}
\begin{appl}\label{excompo}

$\bullet$
Four pairwise disjoint sets $I_k$   are singletons.

$\bullet$
Let $x_0=e-\sum_{i\in I_0}e_i$ be a minimal vector of type 
$p=\abs{I_0}\ge 3$, and 
let $A\subset I_0$, with $1\le \abs{A}\le p-1$. There exists at most
one vector $x_I$ of type $p$ such that $I\cap I_0=A$.
\end{appl} 

\medskip

We now  interchange 
the parts of  even and odd weights, and focus on weight $2$.
\begin{prop}\label{lemW2} 
Let $x_1,\dots, x_r$ be $r\ge 3$ minimal vectors, of the form
$x_k=e-\sum_{i\in I_k}e_i$.
For $1\le k<k'\le r$ we define the relation 
$$I_k\sim I_{k'} \iff  I_k\cap I_{k'}\cap W_2\ne \emptyset\,.$$
We suppose that the graph of the relation $\sim$ is  a cycle of
length $r=3$ or $5$, or a star of valency $3$ (with $r=4$).

 Then 
 the dimension  $n$ is equal to $m$ or $m+1$, where 
$m=|\bigcup_{k=1}^r I_k|$;
moreover,  if $n=m+1$, then $\abs{W_2}=r$ (resp. $r-1=3$) in the case
of a cycle (resp. star).
\end{prop}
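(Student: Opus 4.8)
The plan is to adapt the proof of Lemma~\ref{lemr=4}: there one made even the coefficients of the \emph{even}-weight indices, whereas here I would add the $e_i$ with $i$ of \emph{odd} weight, and crucially exploit that $e$ is now a minimal vector. First I would dispose of the trivial case $n=m$ (nothing to prove) and assume $m\le n-1$, so that there is an index $j_0$ with $m<j_0\le n$ occurring in no $I_k$.

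A preliminary combinatorial remark: write $N$ for the number of edges of the graph ($N=r$ for a cycle of length $r$, $N=3$ for the star with $r=4$). Then $|W_2|\ge N$, since to each edge $\{I_k,I_{k'}\}$ one can attach an index of $I_k\cap I_{k'}\cap W_2$; such an index has weight exactly $2$, hence lies only in $I_k$ and $I_{k'}$, so it determines its edge and distinct edges receive distinct indices. In particular $|\mathcal W_0|\ge|W_2|\ge N$.

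The core step is to produce a congruence. From (1), adding $\sum_{i\in\mathcal W_1}e_i$ turns every coefficient of the $e_i$ even, giving $\sum_{k=1}^r x_k+\sum_{i\in\mathcal W_1}e_i\equiv re\pmod{2L}$. When $r$ is even ($r=4$) one has $re\in 2L$ and I set $X=\{x_1,\dots,x_r\}\cup\{e_i:i\in\mathcal W_1\}$; when $r$ is odd one has $(r-1)e\in 2L$, hence $re\equiv e\pmod{2L}$, and --- this is where minimality of $e$ enters --- I set $X=\{e,x_1,\dots,x_r\}\cup\{e_i:i\in\mathcal W_1\}$. In both cases $X$ is a set of minimal vectors with $\sum_{x\in X}x\equiv 0\pmod{2L}$, so once $X$ is shown free the definition of the length gives $|X|\ge\ell=n$. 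Proving freeness is the one delicate point, but it reduces to a short computation: in a relation $\sum_k\lambda_k x_k+\mu_0 e+\sum_{i\in\mathcal W_1}\mu_i e_i=0$ (with $\mu_0=0$ when $r$ is even), the coefficient of $e_{j_0}$ forces $\mu_0+\sum_k\lambda_k=0$, and then the coefficient of $e_j$, for $j$ the weight-$2$ index attached to an edge $\{I_k,I_{k'}\}$ --- note $j\notin\mathcal W_1$ --- forces $\lambda_k+\lambda_{k'}=0$ along every edge. For an odd cycle, walking around it and using its odd length yields $\lambda_1=\dots=\lambda_r=0$; for the star, $\lambda_2=\lambda_3=\lambda_4=-\lambda_1$ together with $\sum_k\lambda_k=0$ gives the same. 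Then $\mu_0=0$, and linear independence of the $e_i$ kills the remaining $\mu_i$.

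Finally I would count. One has $|X|=\varepsilon+r+|\mathcal W_1|$ with $\varepsilon=1$ for a cycle and $\varepsilon=0$ for the star, so $\varepsilon+r=N+1$ in all cases. Since $|\mathcal W_1|=m-|\mathcal W_0|\le m-N$, the length hypothesis gives $n\le|X|\le(N+1)+(m-N)=m+1$, so $n\in\{m,m+1\}$. If $n=m+1$ every inequality above is an equality, hence $|\mathcal W_0|=N$, and combined with $N\le|W_2|\le|\mathcal W_0|$ this yields $|W_2|=N$, i.e.\ $|W_2|=r$ in the cycle case and $|W_2|=3$ in the star case. The main obstacle is the freeness verification; everything else is bookkeeping with (1) and the definition of the length.
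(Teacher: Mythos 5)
Your proposal is correct and follows essentially the same route as the paper: the same set $X=\{x_1,\dots,x_r\}\cup\{e_i: i\in\mathcal W_1\}\cup\{\rho e\}$ (with $e$ adjoined exactly when $r$ is odd), the same freeness argument via the coefficient of an $e_{j_0}$ with $j_0>m$ followed by the alternation of the $\lambda_k$ along edges through weight-$2$ indices, and the same count $|X|=n$ forced by the length hypothesis. Your bookkeeping via $N\le|W_2|\le|\mathcal W_0|$ is a slightly cleaner packaging of the paper's final display, but the argument is the same.
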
 
\begin{proof}

Note that the cycle (resp. star) contains $r$ (resp. $r-1$) edges,
and thus that the number $|W_2|$  of indices  of weight $2$ is $\ge r$
(resp. $r-1$).  Since there is nothing specific to prove in the case
$n=m$, we shall suppose $n\ge m+1$ and show that then all equalities 
about $n$ and $|W_2|$ hold.

We consider the set 
$$X=\{x_1,\dots,x_r, e_i\ (i\in \cW_1), \rho e\}$$
of minimal vectors, where  $\rho\in\{0,1\}$ is the remainder of $r$
modulo $2$, i.e. $\rho=1$ in the case of the cycle, and $0$ in the
case of the star.  
Using (1) we obtain that the vectors of $X$  add to a congruence
 modulo $2L$:
$$\sum_{k=1}^rx_k+\sum_{i\in \mathcal W_1}(w(i)-2)e_i+(4-r)e=
\sum_{i\in\cW_0}(2-w(i))e_i+2\sum_{i=m+1}^n e_i\,.
\eqno(4)$$

We now prove that the assumption $n\ge m+1$ implies that $X$ is free. 
 Let
$\lb_k\  (k=1,\dots,r),\ \mu_i\  (i\in \cW_1), \mu $ (equal to zero in
the case of the star) be real numbers such that
$$\sum_{k=1}^r\lb_k x_k+\mu e+\sum_{i\in \mathcal W_1}\mu_ie_i=0\,.\eqno (5)$$ 
Put  $a=\frac{\sum_{k=1}^r\lb_k+\mu}2$. 
With respect to the basis 
$(e_i)$ Condition (5) reads:
$$\left\{
\begin{aligned}
a-\sum_{k\mid i\in I_k}\lb_k&=0 \quad  \forall i\in\cW_0\\
a-\sum_{k\mid i\in I_k}\lb_k +\mu_i&=0\quad   \forall i\in\cW_1\\
a&=0\quad  \forall i\ge m+1. \end{aligned}\right.\eqno(5')
$$
Since $n\ge m+1$, we can write  Condition (5') for $i=n$, and we
obtain  $a=0$, i.e. $\sum \lb_k=-\mu$. 

Now, if $I_k\sim I_{k'}$ are adjacent,
we obtain $\lb_k=-\lb_{k'}$ by writing Condition (5') 
 for some $i \in  W_2\cap I_k\cap I_{k'}$. 
In the case of the $3$-star with node $I_1$, it 
follows $\lb_2=\lb_3=\lb_4=-\lb_1$, 
with $\sum \lb_k=0$ since $\mu=0$, and thus $\lb_k=0$ for all $k$.
In the case of the odd cycle say $(I_1,I_2,\dots,I_r)$, 
the $\lb_k$ takes the values $\lb_1$ and $\lb_2=-\lb_1$
alternatively; since $r$ is odd, all $\lb_k$ vanish again, and so do
$\sum\lb_k$ and $\mu$.

Eventually, in both cases (star or cycle), the conditions (5') give
$\mu_i=0$ for all $i\in \mathcal W_1$.
Thus, when $n\ge m+1$, the set $X$ is free. Since its
vectors add to a congruence modulo $2L$, we must have $|X|=n$, where
$$\begin{aligned}
n-|X|&=(|W_2|-r)+(n-m-1) \text{ in the case of the odd cycle}\\
     &(|W_2|-(r-1))+ |W_4|+(n-m-1) \text{ in the case of the star}\,.
\end{aligned}
$$
The  terms between brackets in the right-hand sides are non-negative, 
and since $n-|X|=0$ they vanish, as stated. 
\end{proof}


\section{ Sets of minimal vectors of  type  at most two}

The type $1$  was dealt with in Proposition \ref{proptype1}.
We now focus on the type $2$, i.e. on minimal vectors of the 
form $x=e-e_i-e_j$, $1\le i<j\le n$. 

\begin{theorem} \label{theotype2} 
We define on the set $\{1,2,\dots,n\}$ the relation
$$i\equiv j \quad \text{ if and only if } 
\quad e-e_i-e_j \quad \text{ is a minimal vector}\,.$$
Then, if  $n\ge 6$, the graph of the relation $\equiv$
is a subgraph of the complete bipartite graph with $9$ edges,
except for $n=6$ where it can also be a cycle of length $5$.
\end{theorem}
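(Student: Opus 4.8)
The plan is to recast the statement in graph theory, squeeze the weight lemmas of Section~3 for strong structural restrictions on the graph, and then eliminate the remaining configurations by a change‑of‑basepoint reduction feeding into Application~\ref{excompo}. Write $G$ for the graph of $\equiv$ on $\{1,\dots,n\}$, put $x_{ij}=e-e_i-e_j\in S(L)$ for an edge $\{i,j\}$, and recall that the complete bipartite graph with nine edges is $K_{3,3}$, into which a graph embeds iff it is bipartite with a bipartition having at most three vertices on each side.

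\emph{Phase one: structure from Section~3.} Applying Proposition~\ref{lemW2} to the edge‑vectors of a putative subgraph: the $\sim$‑graph of a triangle is a $3$‑cycle, forcing $n\le m+1=4$, so $G$ is triangle‑free; likewise the $\sim$‑graph of a $5$‑cycle is a $5$‑cycle, forcing $n\le m+1=6$, so $G$ has no $5$‑cycle once $n\ge 7$. Applying Proposition~\ref{lemW1} with $r=4$: four edge‑vectors each carrying a private index of weight $1$ leave no index of weight $3$ or $4$; taking three of them to be a claw with centre $v$ and the fourth disjoint gives $w(v)=3$, a contradiction, so $G$ has no claw plus a disjoint edge, whence $\deg_G(v)=3$ forces $N(v)$ to be a vertex cover and $\Delta(G)=3\Rightarrow\tau(G)\le 3$; a degree‑$4$ vertex is excluded the same way, so $\Delta(G)\le 3$. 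Finally the case $r\ge 5$ of Proposition~\ref{lemW1} forbids five edge‑vectors each with a private weight‑$1$ index, i.e. $G$ has no five‑edge subgraph that is a disjoint union of stars; in particular $\nu(G)\le 3$ (four independent edges are four pairwise‑disjoint $2$‑sets, against the first item of Application~\ref{excompo}), and $G$ has no two disjoint claws, etc.

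\emph{Phase two: the reduction.} For an edge $\{1,2\}$ of $G$, replace $e$ by $e'=x_{12}$: this is a minimal vector of the prescribed shape $\tfrac12(-e_1-e_2+e_3+\cdots+e_n)$, so after the relabelling $e_1\mapsto-e_1,\ e_2\mapsto-e_2$ the full Section‑3 machinery --- Application~\ref{excompo} included --- still applies. Relative to $e'$, every edge $\{k,l\}$ of $G$ disjoint from $\{1,2\}$ becomes a minimal vector of type $4$ (type $3$, via negation, when $n=7$) with index set $\{1,2,k,l\}$. As soon as $G$ contains a path on $7$ vertices, a $3$‑legged spider, or a cycle on $7$ vertices (longer cycles being already excluded), one can choose $\{1,2\}$ so that there are three far edges $\{k_0,l_0\},\{k_1,l_1\},\{k_2,l_2\}$ with $\{k_1,l_1\},\{k_2,l_2\}$ both disjoint from $\{k_0,l_0\}$; then $x_0$ with $I_0=\{1,2,k_0,l_0\}$ together with the two type‑$4$ vectors of index sets $\{1,2,k_1,l_1\},\{1,2,k_2,l_2\}$ all satisfy $I\cap I_0=\{1,2\}=A$ with $1\le|A|=2\le p-1$, contradicting the second item of Application~\ref{excompo}. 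This kills the long paths, spiders and long cycles. It is also exactly where the exceptional $n=6$, $C_5$ slips through: re‑basing a $5$‑cycle leaves only three type‑$3$ vectors whose index sets are overlapping $3$‑element ``windows'', so no two meet a third in a single common index and the argument does not fire. For the shapes that survive --- now confined to few vertices, bipartite, with $\Delta\le 3$, $\tau=\nu\le 3$, no five‑edge star‑forest --- one concludes with a short finite case analysis: fix a vertex cover of size $\le 3$ via König, eliminate the last disconnected possibilities ($P_3+K_2+K_2$, $C_4+P_3$, $\dots$) by an explicit congruence modulo $2L$ in the style of the proof of Lemma~\ref{lemr=4} (yielding a free set of cardinality $n-2<n$), and read off that what is left is a bipartite graph on $\le 6$ vertices with both sides $\le 3$, i.e. a subgraph of $K_{3,3}$.

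\emph{Main obstacle.} The real work is Phase two: assembling the complete (finite) list of shapes allowed by the Section‑3 restrictions yet not embeddable in $K_{3,3}$, and checking case by case that the right edge to re‑base at (it must leave a ``long enough'' family of pairwise‑separated far edges) makes Application~\ref{excompo} fire. The genuinely delicate points are the low dimensions: at $n=7$ the type‑$4$ vectors collapse to type $3$ and the windows overlap too much, so one needs a second re‑basing, a sharper congruence, or a direct positive‑definiteness argument on the Gram matrix of $e_1,\dots,e_n$; and one must verify that the $5$‑cycle at $n=6$ is precisely the configuration that escapes every one of these arguments --- which is the content of the stated exception.
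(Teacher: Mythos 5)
Your Phase one reproduces the paper's opening moves exactly: Proposition \ref{lemW2} kills triangles (and pentagons once $n\ge 7$) because $n\le m+1$, and Proposition \ref{lemW1} bounds the valencies by $3$ and forbids a vertex of valency $3$ in a disconnected graph. Where you diverge is in how you exclude long paths, spiders and long cycles, and this is where the proposal has a genuine gap. The paper does this in one line: from any such configuration one extracts \emph{four} type-$2$ index sets whose graph has \emph{three} connected components; then at least two components are single edges, each of whose two indices has weight $1$, while the two adjacent edges of the remaining component each have exactly one weight-$1$ index --- so Lemma/Proposition \ref{lemW1} applies and its uniqueness conclusion is violated by the single-edge components. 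No change of basepoint is needed. Your re-basing at $e'=x_{12}$ and conversion of far edges into type-$4$ vectors feeding Application \ref{excompo} is a legitimately different mechanism for $n\ge 8$, but it is strictly weaker: as you yourself note, at $n=7$ the type-$4$ index sets must be negated to type $3$, the intersections $I\cap I_0$ are no longer equal to a common set $A$, and \ref{excompo} does not fire. You offer three possible repairs ("a second re-basing, a sharper congruence, or a direct positive-definiteness argument") without carrying out any of them. Since the theorem is claimed for all $n\ge 6$ and $n=7$ is precisely one of the dimensions where perfect lattices of maximal index $2$ exist, this is not a removable technicality.

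The second gap is the endgame. Having restricted to triangle-free graphs with $\Delta\le 3$, $\nu\le 3$ and no long path/cycle/spider, you assert that "a short finite case analysis" plus K\H{o}nig shows the survivors embed in $K_{3,3}$, and that the disconnected leftovers die by "an explicit congruence modulo $2L$ in the style of Lemma \ref{lemr=4}". Neither is exhibited. The paper's proof spends most of its length exactly here: showing that a connected graph with a valency-$3$ vertex $1$ and neighbourhood $V_0$ has every further edge attached to $V_0$, and that the set $V_1$ of vertices adjacent to $V_0$ has at most $3$ elements (the case distinction on whether some vertex of $V_0$ also has valency $3$, again settled by the four-edges-three-components trick). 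Without this, the claim that every surviving graph admits a bipartition into two parts of size $\le 3$ --- as opposed to merely being bipartite with $\le 6$ vertices --- is not established. In short: Phase one is fine, but Phase two replaces the paper's decisive use of Proposition \ref{lemW1} by a heavier tool that fails at $n=7$, and the concluding case analysis, which is the actual content of the theorem, is left as a promise.
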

\begin{proof}
We discard isolated vertices. By \ref{lemW1} we
know that the valencies of the vertices are at most equal to $3$, 
and that a disconnected graph  contains no vertex of valency~$3$.
By Proposition \ref{lemW2}, the graph of
the relation $\equiv$ contains no triangle (since $n>4$) and no
pentagon except for $n=6$. If the graph is connected
(resp. disconnected), it contains no path of length $\ge 6$
(resp. $\ge 4$) and no cycle of length $\ge 7$ (resp. $5$): otherwise, 
we could extract four minimal vectors whose graph has $3$ connected
components, a contradiction with \ref{lemW1}. 
Now we conclude that every  possible linear graph has at
most $6$ (non-isolated vertices) say $V=\{1,2,3,4,5,6\}$, and that
except the pentagon, they are bipartite: we can define a partition
$V=V_0\cup V_1$, $|V_0|=|V_1|=3$ such that no two vertices in
the same $V_k$ are adjacent.
It remains to consider a connected graph with at least one vertex of
valency $3$, say $1$. Denote by $V_0=\{2,4, 6\}$ the set  of its adjacent
vertices. By Proposition \ref{lemW1}, any other edge  must be
connected to this star, 
i.e. one of its end-points  belongs to $V_0$, but not the other one
(triangles are forbidden). Let $V_1$  denote the set 
vertices adjacent to vertices in $V_0$. It contains at most $3$
 vertices, as we shall now prove. If a vertex in $V_0$, say $2$, 
has valency  $3$, exchanging the roles of the vertices $1$ and $2$, 
we see that  $V_1$ is  the set of the  vertices adjacent to
$2$, and thus $|V_1|=3$. If no vertex in $V_0$
 has valency $3$, distinct vertices in $V_1\setminus \{1\}$ are
 adjacent to distinct vertices in $V_0$. Suppose that there are three
 of them, say $3$, $5$, $7$, respectively adjacent to $2$, $4$, $6$.
We have then four edges, namely $1\ 2$, $3\ 2$, $5\ 4$ and $7\ 6$  in
three connected components, a contradiction. Thus $|V_1|=3$.
\end{proof}

\begin{corol}  \label{cort12} 
Let $t_1$ and $t_2$ be the number of minimal vectors of types $1$ and
$2$ respectively. Then $t_1+t_2\le 9$, where  equality holds
only if the  graph of the relation $\equiv$ is the complete bipartite graph
($(t_1,t_2)=(0,9)$) or if it consists of two non-adjacent nodes of
valency $3$ and their common adjacent vertices ($(t_1,t_2)=(3,6)$).
\end{corol}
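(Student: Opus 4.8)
The plan is to bound $t_1+t_2$ by a direct case analysis on the shape of the graph $G$ of the relation $\equiv$ given by Theorem~\ref{theotype2}, since $t_2$ is exactly the number of edges of $G$ and $t_1$ is the number of minimal vectors of the form $e-e_i$ (which Proposition~\ref{proptype1} bounds by $4$, and which moreover interact with $G$ through Proposition~\ref{lemW1}). First I would record that $G$ is, up to the pentagon exception in dimension $6$, a subgraph of $K_{3,3}$, so $t_2\le 9$ always, with $t_2=9$ forcing $G=K_{3,3}$ and in particular forcing every vertex of $G$ to have valency $3$. The pentagon case gives only $t_2=5$ edges, which is far from critical, so it can be set aside quickly.

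Next I would bring in $t_1$. The key point is that a minimal vector $x_0=e-e_i$ of type $1$ and two minimal vectors of type $2$ sharing no index with $x_0$ beyond (possibly) $\{i\}$ form, together with one more type-$2$ vector, a family to which Proposition~\ref{lemW1} applies; more concretely, a type-$1$ vector $e-e_i$ behaves like an edge ``to infinity'' at vertex $i$, so that having a type-$1$ vector at $i$ raises the effective valency of $i$. Since Proposition~\ref{lemW1} caps valencies (counted this way) at $3$ and forbids three pairwise-disconnected such vectors, a vertex carrying a type-$1$ vector can be incident to at most two edges of $G$, and if some vertex of $G$ has valency $3$ then $t_1$ is constrained: one cannot simultaneously have $G$ rich in degree-$3$ vertices and have many type-$1$ vectors. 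I would make this precise by observing that $t_1$ counts indices $i$ with a ``virtual'' pendant edge, and that the union of the genuine edges of $G$ with these $t_1$ pendant edges must itself obey the valency-$\le 3$ and no-three-components conclusions of Proposition~\ref{lemW1} applied to any four of the underlying minimal vectors.

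From here the bookkeeping is short. If $t_1=0$ we are reduced to $t_2\le 9$ with equality iff $G=K_{3,3}$, giving the first extremal case $(0,9)$. If $t_1\ge 1$, each type-$1$ vector sits at a vertex of $G$-valency $\le 2$, and combining the pendant edges with the edges of $G$ one checks that the total $t_1+t_2$ is maximized when $G$ has exactly two vertices of valency $3$ sharing all their neighbours (the graph on $\{1,\dots,6\}$ with edges $1$-$2$, $1$-$4$, $1$-$6$, $3$-$2$, $3$-$4$, $3$-$6$, which has $t_2=6$), and then the two ``free'' vertices $1,3$ of $G$-valency $3$ cannot carry type-$1$ vectors while the three valency-$2$ vertices $2,4,6$ can carry at most one each, but carrying even one would create a vertex of effective valency $3$ adjacent, via the pendant edge plus the two $G$-edges, to three components after removing $1$ or $3$ — here one must check Proposition~\ref{lemW1} once more to see the ceiling is $t_1\le 3$ and that at $t_1=3$ we are forced into exactly $(t_1,t_2)=(3,6)$. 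Every other graph shape (paths, shorter stars, disconnected pieces, the pentagon) gives strictly smaller $t_1+t_2$ by the same counting, and I would dispatch these by noting $t_2\le 9-k$ where $k$ is the number of ``missing'' $K_{3,3}$-edges, against which the gain in $t_1$ is at most $k$ minus the loss forced by the valency constraint.

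The main obstacle I anticipate is the interaction step: cleanly phrasing ``a type-$1$ vector acts as a virtual pendant edge'' so that Proposition~\ref{lemW1} (which is stated for genuine families $x_1,\dots,x_r$ of type-$\ge 1$ vectors with index sets $I_k$) applies verbatim. Concretely one must, given $e-e_i$ and chosen type-$2$ vectors, verify that the index set $\{i\}$ of the type-$1$ vector has a weight-$1$ index in the combined family exactly when the $G$-neighbours of $i$ are distinct from the other chosen edges — i.e.\ the combinatorics of ``weight $1$'' must be translated into the graph language of ``the pendant edge lands on a vertex not otherwise used.'' Once that translation is fixed, the rest is the finite check above, and the two extremal configurations $(0,9)$ and $(3,6)$ fall out as the only cases where no slack is lost.
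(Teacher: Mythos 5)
Your overall strategy is the right one --- restrict to the graphs allowed by Theorem \ref{theotype2} with $t_2\ge 5$, and then use Proposition \ref{lemW1} to constrain where type-$1$ vectors can sit --- and you even flag the weak point yourself. But that weak point is a genuine gap, not just a phrasing issue. Your central mechanism is that a type-$1$ vector $e-e_i$ acts as a ``pendant edge'' at $i$, so that Proposition \ref{lemW1} caps the effective valency at $3$ and hence forbids a type-$1$ vector at a vertex of $G$-valency $3$. This does not follow: Proposition \ref{lemW1} only applies when \emph{every} index set in the family contains an index of weight $1$, and in the family $\{i\},\{i,a\},\{i,b\},\{i,c\}$ the singleton $\{i\}$ contains only the index $i$, which has weight $4$ --- so the hypothesis fails and the proposition says nothing. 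The paper's own analysis confirms that your claim is not what actually holds: for a valency-$3$ vertex $1$ with neighbours $2,4,6$, the constraint it extracts is that $e-e_i$ minimal forces $i\in\{1,2,4,6\}$ (so $i=1$ itself is \emph{not} excluded), plus the separate fact that not all four values occur simultaneously; and in the case of two \emph{adjacent} valency-$3$ nodes the only admissible type-$1$ vectors sit precisely at those valency-$3$ nodes. Since your entire bookkeeping (``each type-$1$ vector sits at a vertex of valency $\le 2$, so the gain in $t_1$ is offset by missing edges'') rests on the false valency cap, the count does not go through as written, and in particular it cannot isolate the two equality configurations.

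Two further points you would need in any repaired version. First, you must rule out type-$1$ vectors at \emph{isolated} vertices of the graph (the paper does this first: with $t_2\ge5$ one can always extract three pairwise disjoint edges, or two secant ones and a disjoint one, which together with $\{i\}$ violate Proposition \ref{lemW1}); otherwise such vectors escape your edge-counting entirely. Second, the cases that actually need work are not uniform: the pentagon needs the observation that three type-$1$ vectors at consecutive vertices plus the opposite edge violate \ref{lemW1} (giving $t_1\le3$, $t_1+t_2\le8$); a $6$-cycle forces $t_1=0$ by a path-of-length-$5$ argument; a valency-$3$ star with a fifth edge forces $i\in\{1,2,4,6\}$ with at most three of these realizable; two adjacent valency-$3$ nodes force $t_1\le2$. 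Each of these uses a hand-picked family of $3$ or $4$ vectors fed into Proposition \ref{lemW1} or \ref{lemW2}, and there is no single ``missing-edge versus gained-pendant'' exchange argument that subsumes them. Your final answer and the two extremal shapes are correct, but the proof as proposed does not establish them.
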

\begin{proof}
Since by Proposition \ref{proptype1} we know that $t_1\le 4$,
we only need to consider the graphs (in the sense
of the theorem) with $t_2\ge 5$ edges. We first note that if $i$ is an
isolated point of the graph, $e-e_i$ cannot be minimal: we
could extract from the $t_2\ge 5$ edges of the graph three disjoint
ones, or two secant and a third one disjoint, which, together with
{i}, would contradict Proposition \ref{lemW1}. We now consider the
case of a pentagon say $(1,2,3,4,5,1)$. If there are $4$ vectors of
type $1$, three of them correspond to consecutive vertices, say
$e-e_1$, $e-e_2$ and $e-e_3$, which together with $e-e_4-e_5$,
contradicts again \ref{lemW1}. Thus $t_1\le 3$ and $t_1+t_2\le 8$.
The other  graphs to consider are
included in the complete bipartite graph associated with, say,  the
partition $\{2,4,6\}\cup \{1,3,5\}$.
We first consider a path of length $5$, say $1-2-3-4-5-6$, and suppose
$e-e_i$ minimal ($i=1,\dots,6$). Then $i=1$ is not possible, because
the four sets $I_1=\{1\}$,
$I_2=\{2,3\}$,  $I_3=\{4,5\}$, $I_4=\{5,6\}$  
 contradict  Proposition \ref{lemW1}. The same argument, with
 $I_2=\{1,2\}$ instead of $\{2,3\}$, forbids $i=3$. So the only
possible values of $i$ are $i=2$ and $i=5$, and $t_1\le 2$,
$t_1+t_2\le 7$.
Now consider the cycle $(1,2,3,4,5,6,1)$. By considering the 
path $1-2-3-4-5-6$, we see that $e-e_1$ is not minimal, and
since all vertices play the same role, we conclude that $t_1=0$.
This conclusion extends to any subgraph of the complete bipartite 
graph containing such a cycle, i.e. the complete graph itself, and the
ones obtained by suppressing one edge,  two disjoint edges or three
pairwise disjoint edges.

One more graph with $5$ edges contains no node of valency $3$: the
disjoint union of a cycle of length $4$, say  $(1,2,3,4)$,
and a path of length $1$. Suppose $e-e_1$ minimal; 
the four sets $I_1=\{1\}$, $I_2=\{2,3\}$,  $I_3=\{3,4\}$, $I_4=\{5,6\}$ 
contradict \ref{lemW1}. Thus there are at most two minimal vectors of
type $1$, namely $e-e_5$ and $e-e_6$, and $t_1+t_2\le 7$.

We are left with graphs  which contain at least 
one node of valency $3$, say $1$, with adjacent 
vertices $2,4,6$. If $e-e_i$ is minimal, we must have
$i\in\{1,2,4,6\}$ (otherwise, the four sets of indices
$\{1,2\},\ \{1,4\},\ \{1,6\}$ and $\{i\}$ should 
contains indices of weight one--respectively $2$, $4$, $6$ and $i$--
but also an index of weight $3$, which contradicts 
Proposition \ref{lemW1}). Note that the four values $i=1,2,4,6$
are never simultaneously possible: since there are more than four
edges, one of the vertices $2,4,6$, say $2$, has 
another adjacent vertix, say $3$. Then if $e-e_1$, $e-e_4$, $e-e_6$
were minimal vectors, the sets $\{1\}$,  $\{4\}$,  $\{6\}$  and
$\{2,3\}$ would contradict Proposition \ref{lemW1}. We then have 
$t_1\le 3$, which completes the case $t_2=5$. 

If there are, in the graph we consider,
two adjacent nodes of  valency $3$, say $1$ and $2$, the only possible 
minimal vectors of type $1$ are thus $e-e_1$ and $e-e_2$. 
In particular, the graph of $t_2=7$ edges obtained by suppression from
the complete bipartite graph two secant edges, say $3-2$ and $3-4$
contains three nodes of valency $3$, namely $1$, $5$ and $6$, where 
$6$ is adjacent to $1$ and $5$. The unique minimal vector of type $1$
is thus $e-e_6$, and $t_1+t_2\le 8$. This completes the case $t_2=7$.

We are left with $3$ non-isomorphic graphs with $6$ edges.
If it is obtained by suppressing (from the complete graph)
 the three edges of a path of length $3$, it contains two adjacent
 nodes of valency $3$, and $t_1\le 2$, as announced.
The same conclusion is valid for the graph obtained by suppression of 
three edges, two of them secant, for instance $4-5$, $5-6$ and $2-3$.
The resulting graph contains the disjoint union of the cycle $(14361)$
with the edge $2-5$, and we have seen that the only possible minimal
vectors are $e-e_2$ and $e-e_5$, and again $t_1\le 2$.
But for the graph obtained by suppressing  three secant edges, say
$5-2$, $5-4$ and $5-6$, it contains two non-adjacent nodes, and it is
consistent with $t_1=3$ minimal vectors of type $1$, namely $e-e_2$,
$e-e_4$, $e-e_6$. The proof of the corollary is now complete.
\end{proof}

\section{ Configurations of three vectors of type $p\ge 3$}

The graph we consider is that of the relation
$\sim$ introduced in Proposition \ref{lemW2}:
$I_k\sim I_{k'}$ if $I_k\cap I_{k'}$ contains an index of weight $2$.

\begin{prop}\label{proptrio} Let
$x_1=e-\sum_{i\in I_1}e_i,\ x_2=e-\sum_{i\in I_2}e_i,\
x_3=e-\sum_{i\in I_3}e_i$ be three vectors
of the same type $p\ge 3$. We suppose that $W_3=\cap I_k$ is not empty.
Then, 
if $(p,n)\ne (4,8)$, one at least of the sets $I_1$, $I_2$ and $I_3$
has no index of weight one, and the 
 $\sim $-graph is a path.
\end{prop}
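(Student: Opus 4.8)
The plan is to mimic the substitution trick already used in Proposition \ref{lemW1}(a): since $W_3=I_1\cap I_2\cap I_3$ is nonempty, pick $j\in W_3$, change $e_j$ into $e'_j=-e_j$ and replace $e$ by $e'=e-e_j=\frac12(e'_j+\sum_{i\ne j}e_i)$. Relative to $e'$ the three vectors $x_k$ become $x_k=e'-\sum_{i\in I'_k}e_i$ with $I'_k=I_k\sm\{j\}$, so their common type drops to $p-1$, and the weight of $j$ drops from $3$ to $0$ while all other weights are unchanged; in particular the $\sim$-graph (weight $2$) and the pattern of weight-one indices among the $I'_k$ is exactly that of the $I_k$ minus the contribution of $j$ (which, having weight $3$, contributed nothing to either). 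Note however that $e'$ need not be minimal, so I must be careful to invoke only those earlier results — Lemma \ref{lemr=4} and Proposition \ref{proptype1} — that do not require $e$ minimal, and to avoid Proposition \ref{lemW1}(b)/\ref{lemW2} for the primed configuration.

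First I would handle the weight-one claim. Suppose, for contradiction, that all of $I_1$, $I_2$, $I_3$ contain an index of weight one. After the substitution, all of $I'_1,I'_2,I'_3$ still contain an index of weight one (the same indices; their weights are unchanged). Now adjoin the vector $e'$ itself viewed as $x_0=e'-\sum_{i\in\{j\}}e_i$, i.e. with $I'_0=\{j\}$, a singleton of weight one in the four-set family $(I'_0,I'_1,I'_2,I'_3)$; the index $j$ then has weight $1$, and every $I'_k$ ($0\le k\le 3$) meets $W_1$. So Lemma \ref{lemr=4} applies to this four-set family: each $I'_k$ has a \emph{unique} index of weight one, and there is no index of weight $3$ in the family. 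Translating back: each $I_k$ has a unique weight-one index and $W_3=\{j\}$. But then, arguing exactly as in the proof of \ref{lemW1}(b), I want to force a fifth type-one vector or otherwise contradict something. Concretely, write $I_k=\{j\}\cup A_k$ where the $A_k$ are the remaining indices; the uniqueness of the weight-one index and absence of weight $3$ (other than $j$) constrains the overlap pattern of the $A_k$ severely. This is where I expect the dimension/type hypothesis $(p,n)\ne(4,8)$ to enter: pushing the combinatorics should produce, after a further substitution on a second common index or on a weight-two index, either five independent type-one vectors (contradicting \ref{proptype1}) or an index-$3$ sublattice — unless $p-1$ and $n$ sit in the exceptional small range, i.e. $p=4$, $n=8$. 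The main obstacle is precisely this step: carefully enumerating the overlap patterns of three $(p-1)$-sets each containing a unique weight-one index with no weight-$3$ index, and checking that each surviving pattern either violates an earlier bound or forces $(p,n)=(4,8)$.

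For the second claim (the $\sim$-graph is a path), once we know that some $I_k$, say $I_3$, has no weight-one index, every index of $I_3$ has weight $2$ or $3$; since $W_3\ni j$, the indices of $I_3$ of weight $2$ lie in $I_1\cap I_3$ or $I_2\cap I_3$ (weight exactly $2$ means in one other set). If both $I_1\cap I_3$ and $I_2\cap I_3$ contained weight-$2$ indices, then $I_3\sim I_1$ and $I_3\sim I_2$, so $I_3$ is adjacent to both; to rule out the triangle I invoke Proposition \ref{lemW2}: a $3$-cycle forces $n\le m+1\le p\cdot\text{(something)}$ — more precisely for a triangle $r=3$, $|W_2|\ge 3$, and combined with $W_3\ne\emptyset$ one gets $m\le |I_1\cup I_2\cup I_3|$ bounded in terms of $p$, hence $n\le m+1$ small, contradicting $n\ge 6$ once $p$ is controlled (and the exceptional case is excluded). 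So at most one of the two adjacencies of $I_3$ survives, and a connected graph on $3$ vertices with no triangle and $I_3$ of degree $\le 1$ is a path. I would also need to check the graph is connected, or rather that a disconnected $\sim$-graph on these three vectors is impossible or is itself trivially a "path" (a single edge plus an isolated vertex, or three isolated vertices, which we may still describe as a path/subpath); here I would lean again on \ref{lemW1} applied after the substitution to forbid too many components. The routine verification that $n=m$ or $m+1$ forces the relevant terms to vanish is already packaged in \ref{lemW2}, so I would cite it rather than redo it.
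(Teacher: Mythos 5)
Your first step (negate $e_j$ for $j\in W_3$, pass to $e'=e-e_j$, adjoin $x_0=e$ with $I'_0=\{j\}$, and apply Lemma \ref{lemr=4}) is exactly the paper's proof of Proposition \ref{lemW1} in the case $r=3$; you could simply cite \ref{lemW1} to conclude that if all three $I_k$ meet $W_1$, then each contains a \emph{unique} weight-one index and $|W_3|=1$. The genuine gap is what comes next, and you flag it yourself as ``the main obstacle'': you hope that further substitutions will produce five type-one vectors or an index-$3$ sublattice, but that is not the mechanism, and I do not see how to make it work. The paper's argument is a direct count: with $a_k=|W_1\cap I_k|=1$ and $c=|W_3|=1$, each $I_k$ consists of one weight-one index, one weight-three index and $p-2$ weight-two indices, so $2|W_2|=3(p-2)$, forcing $p$ even and giving each pairwise intersection exactly $\frac{p-2}{2}\ge 1$ weight-two indices. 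Hence the $\sim$-graph is a triangle, Proposition \ref{lemW2} yields $n\le m+1=\frac{3p}{2}+2$, and since types satisfy $p\le \frac n2$, i.e.\ $n\ge 2p$, the only solution is $(p,n)=(4,8)$. Without this counting step your proof establishes nothing beyond what \ref{lemW1} already says, and in particular never locates where $(p,n)\ne(4,8)$ is used.

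The second half also has a structural error: the set with no weight-one index is the \emph{centre} of the path, adjacent to both others, not a vertex of degree $\le 1$ as you assert. Concretely, if $a_1=0$ then every index of $I_1$ has weight $2$ or $3$, and writing $p=b_2+b_3+c$ (with $b_2=|W_2\cap I_1\cap I_3|$, $b_3=|W_2\cap I_1\cap I_2|$, $c=|W_3|$) one gets $b_2,b_3\ne 0$ from $|I_1\cap I_2|=b_3+c<p$, so $I_1\sim I_2$ and $I_1\sim I_3$ automatically --- this also settles the connectivity question you left open. What must be excluded is the third edge $I_2\sim I_3$: if $b_1=|W_2\cap I_2\cap I_3|\ge 1$ then $m=2p-b_1-c\le 2p-2$, so $n\le m+1\le 2p-1<2p\le n$ by Proposition \ref{lemW2}, a contradiction; your version of this estimate (``contradicting $n\ge 6$'') is not the right inequality and does not close the argument.
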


\begin{proof}
For all $k=1,2,3$ we put $a_k=|W_1\cap I_k|$, $b_k=W_2\cap_{h\ne k}
I_h$, and $c=|W_3|$. We have $p=|I_k|=a_k+(|W_2|-b_k)+c$,
and thus the $a_k-b_k$ have a common value $p-|W_2|-c$.

\medskip
$\bullet$ Suppose first  $a_k\ge 1$ for all $k$.
Then by Proposition \ref{lemW1}, we have $a_1=a_2=a_3=c=1$.
Thus, the $b_k$ have a common value $|W_2|/3$, where 
$2|W_2|=3p-|W_1|-3|W_3|=3p-6$; $p$ is even, $|W_2|=\frac{3(p-2)}2$,
and  the $b_k=\frac{p-2}2$ are  non-zero. We conclude that
the $\sim$-graph is a cycle, and by Proposition \ref{lemW2} 
we must have $n\le m+1$, where $m=\sum|W_i|=\frac32 p+1$.  
The unique solution for the inequalities $2p\le n\le \frac32 p+2$
is $p=4$, $n=8$.

$\bullet$ Now we suppose $a_1=0$, and thus $p=b_2+b_3+c$. Since
$|I_1\cap I_2|=b_3+c$ is $<p$, $b_2$ is non-zero, and so is $b_3$.
From $|W_2|=b_1+p-c$ follow $3p-m=|W_2|+2|W_3|=b_1+p+c$ 
and  $m=2p-c-b_1$. If the graph were a cycle, i.e. if $b_1\ge 1$, we
should obtain $m\le 2p-2$ (since $c=|W_3|\ge 1$), and thus $m+1<2p$, a
contradiction with Proposition \ref{lemW2}.
We conclude that the graph is the path $I_2\sim I_1\sim I_3$.
\end{proof}
\begin{corol}\label{nocycle}
We suppose $(p,n)\ne (4,8)$, and we consider
three distinct minimal vectors $x_0=x_{I_0}, x=x_I$
and $x'=x_{I'}$  of the same type $p\ge 3$ such that $I$ 
and $I'$ both intersect $I_0$.  
We put 
$$I=A\cup X \text{  with }\quad A=I\cap I_0,\quad X=I\smallsetminus A \,,$$
$$I'=A'\cup X' \text{  with }\quad A'=I'\cap I_0,\quad X'=I\smallsetminus A \,.$$
Then:

(i)  The $\sim$-graph 
$(I,I',I_0)$ is a cycle if and only if $A\cap A'=\emptyset$ and 
 $X\cap X'\ne\emptyset$.

(ii) If $A$ and $A'$ satisfy an  inclusion,  so do $X$ and $X'$.

(iii) If $A$ and $A'$ satisfy no  inclusion and if $(I,I',I_0)$ is not
a cycle, then $X$ and $X'$ are disjoint; if moreover
 $A\cap A'\ne \emptyset$, then   $I_0=A\cup A'$. 
\end{corol}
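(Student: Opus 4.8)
The plan is to prove Corollary \ref{nocycle} by translating everything into the weight language of Proposition \ref{lemW2} and Proposition \ref{proptrio}. Throughout, write $x_0=x_{I_0}$, $x=x_I=A\cup X$, $x'=x_{I'}=A'\cup X'$ with $A=I\cap I_0$, $A'=I'\cap I_0$. The key observation is a dictionary between set-theoretic incidences and weights: an index $i$ has weight $2$ in the triple $(I,I',I_0)$ precisely when it lies in exactly two of the three sets, and weight $3$ when it lies in all three. Thus the edge $I\sim I_0$ is present iff $(I\cap I_0)\smallsetminus I'=A\smallsetminus A'\ne\emptyset$ (together with the contribution of $A\cap A'$, which is weight $3$, not $2$), and similarly $I'\sim I_0$ iff $A'\smallsetminus A\ne\emptyset$, while $I\sim I'$ iff $(I\cap I')\smallsetminus I_0\ne\emptyset$, i.e. iff $X\cap X'\ne\emptyset$ (an index in $I\cap I'$ either lies in $I_0$, contributing to a weight-$3$ index, or lies in $X\cap X'$).

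First I would establish (i). The $\sim$-graph on three vertices is a cycle iff all three edges are present. The edge $I\sim I'$ requires $X\cap X'\ne\emptyset$. For the edges to $I_0$: if $A\cap A'\ne\emptyset$, pick $i\in A\cap A'$; then $i$ has weight $3$. But $|I\cap I_0|=|A|$ and since $x\ne x_0$ we have $A\subsetneqq I_0$ (they have the same type $p$), so there could still be weight-$2$ indices; the point is more delicate. Instead I would argue directly: the edge $I\sim I_0$ is present iff $A\not\subseteq A'$ — because an index of $A$ is weight $3$ if it is also in $A'$ and weight $2$ otherwise. So all three edges present $\iff$ $X\cap X'\ne\emptyset$, $A\not\subseteq A'$ and $A'\not\subseteq A$. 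This is not yet ``$A\cap A'=\emptyset$''; to upgrade it I would invoke Proposition \ref{proptrio}: a cycle forces $W_3=I\cap I'\cap I_0=A\cap A'$ to be empty (a cycle is not a path, and when $W_3\ne\emptyset$ the graph must be a path unless $(p,n)=(4,8)$, which is excluded). Conversely, if $A\cap A'=\emptyset$ and $X\cap X'\ne\emptyset$, then $A\not\subseteq A'$ and $A'\not\subseteq A$ (both are non-empty since $I,I'$ meet $I_0$), so all three edges are present and the graph is a cycle. That proves (i).

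Next, (ii): suppose $A$ and $A'$ satisfy an inclusion, say $A\subseteq A'$. Then the edge $I\sim I_0$ is absent, so the graph is not a cycle; by (i) this is consistent with either $X\cap X'=\emptyset$ or $X\cap X'\ne\emptyset$. To get the inclusion of $X$ and $X'$ I would use Proposition \ref{proptrio} or \ref{lemW1} according to whether $W_3=A\cap A'=A$ is empty or not. If $A=\emptyset$ then $I$ is disjoint from $I_0$ while $I'$ meets it, and one shows directly via \ref{excompo} (the ``at most one $x_I$ with $I\cap I_0=A$'' clause) combined with counting that $X=I$, $X'\subseteq I$ forces $X'\subseteq X$ — here I expect to compare $|I\cup I_0\cup I'|=m$ against the bound $n\le m+1$ and the type equations $|I|=|I'|=|I_0|=p$. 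If $A\ne\emptyset$, then $W_3\ne\emptyset$, so by Proposition \ref{proptrio} the graph is a path; since the edge $I\sim I_0$ is missing, the path is $I\sim I'\sim I_0$, forcing $X\cap X'\ne\emptyset$ and $A'\smallsetminus A\ne\emptyset$. Now $|X|=|I|-|A|=p-|A|$ and $|X'|=p-|A'|\le p-|A|=|X|$, and I claim $X'\subseteq X$: writing the cardinality count $m=|W_1|+|W_2|+|W_3|$ from the proof of \ref{proptrio} with $a_1=0$ (the vertex $I_0$ or one of the others has no weight-one index) and comparing with $2p\le n\le m+1$ pins down the overlaps, forcing $X\smallsetminus X'=\emptyset$. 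This cardinality-chasing step is where the real work lies.

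Finally, (iii): assume $A,A'$ satisfy no inclusion and $(I,I',I_0)$ is not a cycle. By (i), ``not a cycle'' together with $A\not\subseteq A'$, $A'\not\subseteq A$ (both hold by hypothesis, so both edges to $I_0$ are present) forces the missing edge to be $I\sim I'$, i.e. $X\cap X'=\emptyset$, which is the first assertion. For the second, suppose in addition $A\cap A'\ne\emptyset$; then $W_3=A\cap A'\ne\emptyset$, so Proposition \ref{proptrio} applies and the $\sim$-graph is a path, necessarily $I\sim I_0\sim I'$ (the node is $I_0$, as its two incident edges are the ones present). Reading off the proof of \ref{proptrio} in the case where some $a_k=0$: with the node being $I_0$ we get $p=|I_0|=b+b'+c$ where $b=|W_2\cap(I\text{ side})|$, $b'=|W_2\cap(I'\text{ side})|$, $c=|W_3|=|A\cap A'|$, and these three pieces partition $I_0$; but $W_2$ on the $I$-side of $I_0$ is exactly $A\smallsetminus A'$ and on the $I'$-side is $A'\smallsetminus A$, while $W_3\cap I_0=A\cap A'$. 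Hence $I_0=(A\smallsetminus A')\cup(A'\smallsetminus A)\cup(A\cap A')=A\cup A'$, as claimed. The main obstacle, as flagged, will be the cardinality bookkeeping in (ii) — making the inequalities $2p\le n\le m+1$ and the type equations force the set inclusion rather than merely the cardinality inequality — but Proposition \ref{proptrio}'s proof already carries out essentially this computation, so it should transfer with only minor additional care about which index set plays the role of the path's centre.
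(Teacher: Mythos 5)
Your overall strategy is exactly the paper's: set up the dictionary between incidences and weights (weight-one sets $I_0\smallsetminus(A\cup A')$, $X\smallsetminus X'$, $X'\smallsetminus X$; weight-two sets $A\smallsetminus A'$, $A'\smallsetminus A$, $X\cap X'$; $W_3=A\cap A'$) and then feed the triple into Proposition \ref{proptrio}. Your parts (i) and (iii) are correct and essentially identical to the paper's argument.

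Part (ii), however, is left with a genuine hole, and it is the one place where you announce that ``the real work lies'' without doing it. Two concrete problems. First, the case $A=\emptyset$ that you try to treat separately is vacuous: the corollary assumes $I$ and $I'$ both intersect $I_0$, so $A$ and $A'$ are non-empty and $W_3=A\cap A'$ equals the smaller of the two, hence is automatically non-empty; the elaborate appeal to \ref{excompo} and to $n\le m+1$ there is unnecessary and, as sketched, would not go through. Second, in the main case your stated target is wrong: assuming $A\subseteq A'$ you claim $X'\subseteq X$ but then say the computation forces $X\smallsetminus X'=\emptyset$, which is the statement $X\subseteq X'$ --- combined with $\abs{X'}\le\abs{X}$ this would give $X=X'$, hence $A=A'$ and $I=I'$, a contradiction rather than the desired conclusion. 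No cardinality chase comparing $m$ with $2p\le n\le m+1$ is needed at all. The short correct route (the paper's) is: Proposition \ref{proptrio} says one of the three sets has an empty weight-one part; the weight-one part of $I_0$ is $I_0\smallsetminus A'$, which is non-empty since $A'\subsetneqq I_0$ (as $I'\ne I_0$ have the same cardinality); so the empty one is $X\smallsetminus X'$ or $X'\smallsetminus X$, i.e.\ $X$ and $X'$ satisfy an inclusion, and $\abs{X}=p-\abs{A}\ge p-\abs{A'}=\abs{X'}$ forces it to be $X'\subseteq X$. With that replacement your proof is complete and coincides with the paper's.
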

{\sl Proof.}
The sets of indices of weight one in $I_0$, $I$ and $I'$ are
respectively $I_0\setminus (A\cup A')$, $X\setminus X'$ and
$X'\setminus X$;
the sets of indices of weight two in $I_0\cap I$, $I_0\cap I'$ and
$I\cap I'$ are respectively   $A\setminus A'$, $A'\setminus A$ and 
$X\cap X'$; eventually the set $W_3$  of indices of weight three is $A\cap
A'$.

(i) We see directly that $(I,I',I_0)$ is a cycle if and only if $A$
and $A'$ satisfy no inclusion and $X$ and $X'$ intersect. From
Hypothesis $(p,n)\ne (4,8)$, Proposition \ref{proptrio} shows that 
it can only happen when $A\cap A'=\emptyset$.

(ii) Assume $A'\subset A$. Then $W_3=A'$ is not empty. From
Proposition \ref{proptrio},  one at least of the sets $I$, $I'$ and
$I_0$ contains no index of weight one. Since $I_0\setminus (A\cup
A')=I_0\setminus A$ is not empty (since $I\ne I_0$ have the same
cardinality), $X$ and $X'$ satisfy an inclusion, namely 
 $X\subset X'$ (since $|A|+|X|=|A'|+|X'|=p$). 
In particular, if $A=A'$, we have  $X\subset X'$ and $|X|=|X'|$,
thus  $X=X'$ and $I=I'$.

(iii)  Now we assume that $A$ and $A'$ satisfy no inclusion, i.e.
 $I\sim I_0$ and $I'\sim I_0$. As already noted in (i), we must have
$I\nsim I'$, i.e. $X\cap X'=\emptyset$.
If moreover $A\cap A'\ne \emptyset$, by Proposition \ref{proptrio} one 
at least of the sets $W_1\cap I=X$, $W_1\cap I'=X'$ and 
$W_1\cap I_0=I_0\setminus (A\cup A')$ is empty, thus $I_0=A\cup A'$.
\qed

\medskip

The end of the section is devoted to the special 
case $(p,n)=(4,8)$.
\begin{prop} \label{4,8} If $n=8$, then $t_4\le 6$.
\end{prop}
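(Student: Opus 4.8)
The plan is to study the family of type-$4$ minimal vectors $x_I=e-\sum_{i\in I}e_i$ with $|I|=4$ in $\{1,\dots,8\}$, viewed as $4$-subsets of an $8$-element set, and to exploit the intersection constraints coming from Applications~\ref{excompo} and Corollary~\ref{nocycle}, together with the exceptional cycle case allowed by Proposition~\ref{proptrio} when $(p,n)=(4,8)$. Note that for $|I|=|I'|=4$ in an $8$-set, either $I\cap I'=\emptyset$ (i.e. $I'$ is the complement of $I$), or $2\le|I\cap I'|$ is impossible to force directly, so the real content is: by Application~\ref{excompo}, for any fixed type-$4$ vector $x_{I_0}$ and any $A\subsetneq I_0$ with $1\le|A|\le 3$, there is \emph{at most one} type-$4$ vector $x_I$ with $I\cap I_0=A$. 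Summing over the $2^4-2=14$ proper nonempty subsets $A$ of $I_0$ would give a crude bound, so the first step is to sharpen this using the cycle analysis.

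First I would fix one type-$4$ vector $x_{I_0}$, say with $I_0=\{1,2,3,4\}$, and sort the remaining type-$4$ vectors $x_I$ by $a=|I\cap I_0|\in\{0,1,2,3\}$ (the value $a=4$ gives $I=I_0$). The case $a=0$ contributes only the complementary set $I=\{5,6,7,8\}$, at most one vector. For $a=3$, Application~\ref{excompo} gives at most one vector per $3$-subset $A\subset I_0$, hence at most $4$; but I expect these to interact badly — if $x_I$ and $x_{I'}$ both have $|I\cap I_0|=3$ with $A\neq A'$, then $A\cap A'$ has size $2$, so $A\cap A'$ consists of weight-$3$ indices, $W_3\neq\emptyset$, and Proposition~\ref{proptrio}/Corollary~\ref{nocycle}(ii)–(iii) force strong structure that I would use to cap the total. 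The case $a=2$ is handled the same way: at most one vector per $2$-subset $A\subset I_0$ (six subsets), but whenever two such sets meet $I_0$ in overlapping pairs we are again in the $W_3\neq\emptyset$ regime of Proposition~\ref{proptrio}, which (outside the cycle case) forces the $\sim$-graph to be a path and hence restricts how many can coexist; when $A\cap A'=\emptyset$ we may land in the exceptional cycle, which Proposition~\ref{lemW2} then pins down completely ($|W_2|=r=3$, $n=m+1=8$, so $m=7$), again a rigid configuration.

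The main obstacle will be the bookkeeping in the $a=2$ and $a=3$ cases, where the exceptional cycle $(p,n)=(4,8)$ is genuinely available and Corollary~\ref{nocycle}(i) does not kill it. My plan there is to argue by a weight/counting identity: for any three type-$4$ vectors forming a cycle we have (from the proof of Proposition~\ref{proptrio}) $|W_2|=\tfrac{3(p-2)}2=3$, $|W_1|=6$, $|W_3|=c$, and $m=\tfrac32 p+1=7$; so a cyclic triple uses up exactly $7$ of the $8$ coordinates with a very specific weight pattern (three indices of weight $2$, six of weight $1$, which is impossible since $3\cdot2+6\cdot1=12=3\cdot4$ checks out, with one coordinate unused). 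Two distinct cyclic triples would then have to share this $7$-coordinate structure in a way I would show forces them to essentially coincide, bounding the number of type-$4$ vectors meeting $I_0$ in a $2$-set by $3$ rather than $6$. Combined with $\le 1$ for $a=0$, $\le 1$ for $a=4$ (namely $x_{I_0}$ itself), and a similar reduction for $a=3$ and $a=1$ via Application~\ref{excompo} and the path structure, the counts should add to at most $6$. I would finish by checking that the bound $6$ is attained (e.g. by exhibiting the relevant configuration, or noting it occurs for $P_7^{32}$ extended or an analogous lattice), though attainment is not needed for the statement.
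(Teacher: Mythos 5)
There is a genuine gap here: your write-up is a plan whose decisive steps are asserted rather than carried out, and two of its ingredients are wrong as stated. First, you never invoke the normalization that makes this case tractable: since $p=\tfrac n2$, the convention of Section 2 lets one assume \emph{every} type-$4$ index set contains the index $1$ (otherwise $x_I$ and $x_{\{1,\dots,8\}\smallsetminus I}$ are the same pair $\pm x$, so your ``$a=0$ contributes one vector'' is double-counting $-x_{I_0}$). With that normalization the admissible intersections $A=I\cap I_0$ are only the subsets of $I_0$ containing $1$, i.e.\ three of size $3$, three of size $2$ and one of size $1$, and the target $1+3+2=6$ becomes reachable; without it you have $4+6+4$ candidate intersections and your claimed reductions would have to be much stronger than anything you argue. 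Second, your weight bookkeeping for a cyclic triple is internally inconsistent: by the first bullet of the proof of Proposition \ref{proptrio}, a cycle of three type-$4$ sets with all $a_k\ge 1$ has $a_1=a_2=a_3=c=1$, hence $\abs{W_1}=3$, $\abs{W_2}=3$, $\abs{W_3}=1$ and $m=7$; your ``six indices of weight $1$'' gives $m=9>8$ and contradicts your own $m=\tfrac32p+1=7$.

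Most importantly, the heart of the proposition --- the bounds $\le 3$ for the sets with $\abs{I\cap I_0}\in\{1,3\}$ and $\le 2$ for those with $\abs{I\cap I_0}=2$ --- is exactly what you leave as ``the counts should add to at most $6$.'' The paper's proof does concrete work at each of these points: it uses Proposition \ref{lemW2} to force the three sets meeting $I_0$ in a triple to be (up to permutation) $\{1,2,3,5\}$, $\{1,2,4,6\}$, $\{1,3,4,7\}$ (ruling out $a=5$ because a cycle with $m=5$ would force $n\le 6$); it uses Proposition \ref{lemW1} to pin a set with $\abs{I\cap I_0}=1$ down to $\{1,5,6,7\}$ and then kills it with Proposition \ref{lemW2} applied to the resulting star ($m=7$, $\abs{W_2}=6>3$); and it shows that three sets meeting $I_0$ in distinct pairs through $1$ must be $\{1,2,5,6\}$, $\{1,3,6,7\}$, $\{1,4,5,7\}$ up to permutation, again contradicting Proposition \ref{lemW2}. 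None of these arguments, or substitutes for them, appear in your proposal, so the proof is not there yet.
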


\begin{proof}
Indeed, since $n=2p$ we may and shall assume that all index sets
contain $1$. Let $I_0=\{1,2,3,4\}$ be one of them;
from \ref{excompo} we know that
there is at most one $I\ne I_0$ for a given $I\cap I_0$.
Let $I$ be such that $|I\cap I_0|=3$.  For instance, 
put $I_1=\{1,2,3,5\}$,
and let $I_2=\{1,2,4,a\}$, with $a\in \{5,6,7,8\}$. Actually, if
$a=5$, the configuration $\{I_0, I_1, I_2\}$ is a cycle  in the sense of
\ref{lemW2}, which is absurd since $n=8>5+1$. Thus for instance
$I_2=\{1,2,4,6\}$ and similarly $I_3=\{1,3,4,7\}$. Now, let  $x_I$ be a
minimal vector such that $|I\cap I_0|=1$, i.e. 
$I=\{1,a,b,c\}$, with $\{a,b,c\}\subset \{5,6,7,8\}$. Actually,
by Proposition \ref{lemW1} we must have $\{a,b,c\} =\{5,6,7\}$.
Otherwise, if for instance $5\notin \{a,b,c\}$, 
the configuration $\{I_0,I_1,I\}$  would contain 
too many  indices of weight 1. Now we have  $I\sim I_k$ for 
$k=1,2,3$ and the configuration $\{I_1, I_2,I_3, I\}$ 
contradicts Proposition \ref{lemW2} ($n=8,m=7,|W_2|=6>3$).
We conclude that there are at most $3$ minimal vectors $x_I$ such that 
$|I\cap I_0|=1$ or $3$. Now, we shall prove that  there are at most $2$
minimal vectors $x_I$ such that $\abs{I\cap I_0}=2$.
Otherwise,  let $I=\{1,2\}\cup X$, $I'=\{1,3\}\cup X'$ and 
$I"=\{1,4\}\cup X"$ be three solutions ($X$, $X'$ and $X"$ subsets of
$\{5,6,7,8\}$). In their configuration $(I,I',I")$, $2$, $3$ and $4$
have weight $1$, and by \ref{proptrio}  the elements of $X$,
$X'$ and $X"$ must have weight $\ne 1,3$, i.e.
they have weight $2$, wich leads (up to permutation) to
$I=\{1,2,5, 6\}$, $I'=\{1,3,6,7\}$ and $I"=\{1,4,5,7\}$. Now  the
configuration $I_0, I,I',I"$ satisfies the hypotheses of Proposition
\ref{lemW2}, with $m=7$ and $n=8$, but $\abs{W_2}=6$, a contradiction.
\end{proof}

\medskip

Taking into account Proposition \ref{4,8}, we discard in the next
sections the case $(p,n)=(4,8)$.

\section{Families  without cycles of length $3$}
 
\begin{theorem} \label{theoI1I2} Let $\{x_I, I\in \cF\}$ be a set 
of minimal vectors of the same type $p\ge 3$,
such that $\cF$ contains no cycles of length $3$.

If  $p\ge 4$ and $n\ge 2p+2$, then  $|\cF|\le p+6$.

If $p\ge 4$ and $n= 2p+1$, or $p=3$ and $n\ge 8$, then   $|\cF|\le p+5$.

If $(p,n)=(3,7)$, then $\abs{\cF}\le 7$.

If $p\ge 3$, $p\ne 4$ and  $n=2p$, then $\abs{\cF}\le p+1$.
 \end{theorem}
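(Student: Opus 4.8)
The plan is to fix one index set $I_0\in\cF$ and to control every other $I\in\cF$ through its trace $A_I=I\cap I_0$ and its cotrace $X_I=I\setminus I_0\subseteq\{1,\dots,n\}\setminus I_0$, splitting $\cF\setminus\{I_0\}$ into $\cF'=\{I:A_I\neq\emptyset\}$ and $\cF''=\{I:A_I=\emptyset\}$. The core observation is that $\{X_I:I\in\cF'\}$ is a laminar family of nonempty subsets of the $(n-p)$-element set $\{1,\dots,n\}\setminus I_0$. Indeed, for $I,I'\in\cF'$ the triple $(I,I',I_0)$ is not a cycle, by the hypothesis on $\cF$; so parts (ii) and (iii) of Corollary \ref{nocycle} show that $X_I$ and $X_{I'}$ are nested when $A_I$ and $A_{I'}$ are, and disjoint otherwise. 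Since $A_I\mapsto I$ is injective (Application \ref{excompo}), distinct members of $\cF'$ have distinct cotraces, and the containment order on the $X_I$ is the reverse of the containment order on the traces $A_I\subsetneq I_0$. Hence every chain of $X_I$'s has at most $p-1$ members (a chain of nonempty proper subsets of a $p$-set) and the minimal $X_I$'s are pairwise disjoint.

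I would then determine the shape of this forest. An antichain of $X_I$'s is a set of pairwise disjoint cotraces, so the corresponding traces are pairwise non-nested, hence (being subsets of the $p$-set $I_0$) either pairwise disjoint or pairwise complementary in $I_0$: in the first case the index sets $I$ themselves are pairwise disjoint, so there are at most three of them since $p\ge3$ (Application \ref{excompo}), and in the second case any three would form a $3$-cycle of the relation $\sim$, excluded by the hypothesis on $\cF$ (or contradicting Proposition \ref{proptrio} when their common intersection is nonempty); so again at most three, and at most two when $n<3p$. The same $3$-cycle exclusion, applied to a branching node and its two children, shows that the forest is binary and that at every branching the two children partition the parent. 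A binary forest with at most three leaves in which branchings partition, each node $X_I$ carrying the label $|A_I|\in\{1,\dots,p-1\}$ which strictly increases from root to leaf and whose children's labels add up to $p+|A_{\text{parent}}|$ at a branching, has at most $p+O(1)$ nodes; an elementary count yields the constant, and when $n<3p$ it gives $|\cF'|\le p$.

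Next I would bound $|\cF''|$. When $n=2p$ every index set contains the prescribed index $1\in I_0$, so $\cF''=\emptyset$ and, combined with the previous paragraph, $|\cF|\le p+1$. When $n=2p+1$ each $I\in\cF''$ is $\{1,\dots,n\}\setminus I_0$ with one index removed, and any three of them form a $3$-cycle of $\sim$ (their common intersection has $p-2\ge1$ indices and each pair still sees an index of weight $2$), which is excluded; hence $|\cF''|\le2$. When $n\ge2p+2$ a large $\cF''$ has to be reconciled with $\cF'$: re-rooting at a member of $\cF''$ places $I_0$ in its cotrace family, and one invokes Application \ref{excompo} (no four pairwise disjoint non-singletons) together with Proposition \ref{lemW2} to forbid the pentagons and $3$-stars that a big configuration would contain, so as to bound $1+|\cF'|+|\cF''|$ by $p+6$. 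Assembling the estimates and checking the arithmetic case by case gives the four stated bounds, including the sharper $|\cF|\le7$ for $(p,n)=(3,7)$.

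The main obstacle is the case $n\ge2p+2$: there the laminar forest has to be analysed simultaneously for $\cF'$ and for the members of $\cF''$ (which live in the smaller set $\{1,\dots,n\}\setminus I_0$), and one must show these two parts cannot both be large, using the interaction between a vector of $\cF''$ and a chain of $\cF'$ via Propositions \ref{proptrio} and \ref{lemW2}; keeping the additive constant down to $+5$ and $+6$ is where most of the work lies. By contrast, the binary/partition structure of the forest and the $3$-cycle exclusions for antichains, which already settle the case $n=2p$, are the conceptual heart of the argument and are routine once the set-up is in place.
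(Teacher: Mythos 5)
Your set-up --- fixing $I_0$, writing each $I$ as $A\cup X$ with $A=I\cap I_0$, and using Corollary \ref{nocycle} to turn the cotraces $X_I$ into a laminar family --- is essentially the paper's own starting point in Proposition \ref{theoI0}, and your observations about branchings (the children of a node partition it, and the labels satisfy $|A_I|+|A_{I'}|=p+|A_J|$) are sound. But there are two genuine gaps. First, the ``elementary count'' of the laminar forest does not give $|\cF'|\le p+O(1)$: when the forest has two roots whose traces $A$ and $B$ are \emph{disjoint} (the case where no covering relation $A\cup B=I_0$ is forced), the two chains of traces above $A$ and above $B$ can a priori have up to $p-|A|$ and $p-|B|$ members, i.e.\ up to $2p-2$ in total. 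The paper closes exactly this hole with a separate argument at the end of the proof of Proposition \ref{theoI0}: assuming both chains have length $\ge 2$ resp.\ $\ge 3$, it applies Proposition \ref{lemW1} to the four vectors attached to $A_1,B_1,A_3,B_2$ to force $A_3\cap B_2\ne\emptyset$, hence $|A_3|+|B_2|\ge p+1$ by Corollary \ref{nocycle}, and only then does the count drop to $p+3$. Your branching constraint applies only when the two traces intersect, so nothing in the forest picture supplies this step.

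Second, the case $n\ge 2p+2$ with $\cF''\ne\emptyset$ --- which you correctly identify as the main obstacle --- is where the paper spends most of its effort (Propositions \ref{theo123} and \ref{propF1F2}, with Lemmas \ref{prop123} and \ref{lemF1F2}), and your proposal offers only the intention to ``forbid pentagons and $3$-stars''. The actual argument splits $\cF$ along two disjoint members $I_1,I_2$ meeting everything, proves $\min(|\cF_1|,|\cF_2|)\le 3$ by classifying the elements $J=B_2\cup Y$ of $\cF_2$ according to whether $Y$ is nested with both, one, or neither of $X,X'$ (Types I and II, with Lemma \ref{lemF1F2} giving uniqueness of the non-nested solution), and disposes separately of the three-pairwise-disjoint configuration via Lemma \ref{prop123}. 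Without an argument of this kind the additive constants $+5$ and $+6$, and the special bound $7$ for $(p,n)=(3,7)$, are unsupported. (A smaller point: two non-nested traces with nonempty intersection satisfy $A\cup A'=I_0$ by Corollary \ref{nocycle}(iii), not $A'=I_0\smallsetminus A$; the $3$-cycle conclusion you draw still holds, but ``pairwise complementary'' as written is false.)
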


[{\small Note that the bound $p+6$  is indeed reached, as checked
for $(p,n)=(4,10), (5,12),\dots $}.]

The whole section is devoted to the proof of this theorem. We first
consider the case when
all elements of $\cF$ intersect a given one, which includes the case 
$p=\frac n2$, since then we may prescribe that all $I$ contain a given
index.  

\begin{prop}\label{theoI0}
We suppose that 
there exists $I_0\in\cF$ such that for all $I\in \cF$, $I\cap I_0\ne
\emptyset$. 

If $ p\ge 4$ (resp. $p=3$) and $n\ge 2p+1$, then $\abs{\cF}\le p+3$
(resp. $5$);

if $ p\ge 3$ and $n= 2p$, then $\abs{\cF}\le p+1$.
\end{prop}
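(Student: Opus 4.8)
The plan is to fix the distinguished element $I_0\in\cF$ and to replace each $I\in\cF$ by its trace $A_I:=I\cap I_0$, a nonempty proper subset of $I_0$ (proper since $|I|=|I_0|=p$). Writing $I=A_I\sqcup X_I$ with $X_I:=I\sm I_0\subseteq J:=\{1,\dots,n\}\sm I_0$, one has $|X_I|=p-|A_I|\ge1$ and $|J|=n-p$. By Application \ref{excompo} the correspondence $I\mapsto A_I$ is injective on $\cF\sm\{I_0\}$, so it suffices to bound $|\cA|$, where $\cA:=\{A_I : I\in\cF,\ I\ne I_0\}$; note that when $n=2p$ all index sets may be assumed to contain a common index, so no two members of $\cA$ are disjoint.

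The main tool is Corollary \ref{nocycle}, applied to $(I_0,I,I')$ for each pair $I\ne I'$ in $\cF\sm\{I_0\}$: since $\cF$ has no $3$-cycle, part (i) forbids ``$A_I\cap A_{I'}=\emptyset$ while $X_I\cap X_{I'}\ne\emptyset$''. Hence if $A_I\subsetneq A_{I'}$ then $X_{I'}\subsetneq X_I$; if $A_I,A_{I'}$ are incomparable then $X_I\cap X_{I'}=\emptyset$, and if moreover $A_I\cap A_{I'}\ne\emptyset$ then $A_I\cup A_{I'}=I_0$. From this I would first deduce that $\cA$, ordered by inclusion, is a \emph{forest}: if $A_1,A_2\in\cA$ were incomparable but both contained in some $B\in\cA$, then $X_B\subseteq X_{A_1}\cap X_{A_2}=\emptyset$, forcing $B=I_0\notin\cA$. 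So every principal down-set of $\cA$ is a chain, and $\cA$ has finitely many maximal elements (``leaves''), with no node above two incomparable ones.

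Next I would bound the number of leaves and dispose of the easy shapes. If $\cA$ has three pairwise incomparable leaves $M_1,M_2,M_3$, their $X$-parts are pairwise disjoint, so Proposition \ref{proptrio} (applied to $(I_{M_1},I_{M_2},I_{M_3})$) forces $\bigcap_k A_{M_k}=\emptyset$ — otherwise some $M_k$ would carry no index of weight $1$, impossible since $X_{M_k}\ne\emptyset$ — and then, using $A_{M_i}\cup A_{M_j}=I_0$ for traces that meet together with the $3$-cycle prohibition, one checks that the three traces are actually pairwise disjoint, so $\sum_k|A_{M_k}|\le p$ and the chain from each root has length $\le|A_{M_k}|$, giving $|\cF|\le p+1$; a fourth leaf would make four traces pairwise disjoint, hence singletons by Application \ref{excompo}, contradicting $|I|=p$. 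Likewise, if $\cA$ has a single root $R$ — so $\cA$ is a chain, or a ``$Y$'' with stem top $B$ and two leaves $M_1,M_2$ — then for the triple $(I_B,I_{M_1},I_{M_2})$ the intersection $A_{M_1}\cap A_{M_2}$ must equal $B$, a larger intersection making that triple a $3$-cycle; so $M_1\sm B$ and $M_2\sm B$ are disjoint, $|M_1|+|M_2|\le p+|B|$, and the chain bookkeeping $|\cA|\le(|B|-|R|+1)+(|M_1|-|B|)+(|M_2|-|B|)$ yields $|\cA|\le p-|R|+1\le p$, i.e. $|\cF|\le p+1$ again.

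The remaining case is $\cA$ a disjoint union of two chains $C_1,C_2$ with roots $R_1,R_2$ and leaves $M_1,M_2$; here $R_1,R_2$ are incomparable, so are $M_1,M_2$, and $R_i$ is incomparable to $M_j$ for $i\ne j$, so from $|\cA|\le(|M_1|+|M_2|)-(|R_1|+|R_2|)+2$ the goal is to force $|R_1|+|R_2|\ge p-2$ and, for large $n$, to push further. When the roots meet, $R_1\cup R_2=I_0$ gives $|R_1|+|R_2|\ge p+1$; when they are disjoint, $X_{R_1}\cap X_{R_2}=\emptyset$ in $J$ gives $|R_1|+|R_2|\ge 3p-n$, which already settles $n\le 2p+2$, and when $n=2p$ the ``no disjoint traces'' condition makes the roots and leaves large enough that $|\cA|\le p$. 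The hard part will be precisely this last situation — two disjoint chains with disjoint roots and $n$ large — where the crude estimates no longer close the gap and one must exploit further the disjointness of the various $X$-sets inside $J$, the relations ``incomparable traces have union $I_0$'', and Propositions \ref{proptrio} and \ref{lemW2} applied to the triples and quadruples one can extract, through a somewhat intricate case analysis (with the small type $p=3$, where $I_0$ has only three proper subsets of each size, handled directly). Assembling the cases and adding $1$ for $I_0$ then yields $|\cF|\le p+3$, resp. $5$ for $p=3$, resp. $p+1$ for $n=2p$.
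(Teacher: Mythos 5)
Your setup coincides with the paper's: pass to the traces $A_I=I\cap I_0$, use Application \ref{excompo} for injectivity, and use Corollary \ref{nocycle} to show that, apart from degenerate configurations, the traces organize into (at most) two chains ordered by inclusion; your treatment of three pairwise incomparable traces, of the single-root case, and of $n=2p$ is consistent with the paper's Lemma \ref{lem3I0} and its ``case $A\cap B\ne\emptyset$''. The problem is that you stop exactly at the case that carries the content of the proposition. For two disjoint chains with disjoint roots $R_1,R_2$ and tops $M_1,M_2$, your estimate $\abs{\cF}\le 1+(\abs{M_1}-\abs{R_1}+1)+(\abs{M_2}-\abs{R_2}+1)\le 2p+1-(\abs{R_1}+\abs{R_2})$ combined with $\abs{R_1}+\abs{R_2}\ge 3p-n$ only gives $\abs{\cF}\le n-p+1$, which exceeds $p+3$ as soon as $n\ge 2p+3$; you acknowledge this and then assert the conclusion after ``a somewhat intricate case analysis'' that is never carried out. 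That missing analysis is the proof.

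For the record, the paper closes this case not by estimating the roots but by looking two or three steps up each chain. Writing $\cA$ and $\cB$ for the two chains, with $\abs{\cA}=h\ge\abs{\cB}=k$ (the cases $k=1$ and $h\le 2$ being immediate and giving $p+1$ and $5$, the latter covering $p=3$), one takes the four minimal vectors corresponding to the bottom elements $A=A_1$, $B=B_1$ and to $A_3$, $B_2$. If $A_3\cap B_2$ were empty, each of these four index sets would contain an index of weight one, but $I_{A_3}$ would contain at least two of them ($\abs{A_3\sm A}\ge 2$), contradicting Proposition \ref{lemW1} --- note that the decisive tool here is \ref{lemW1}, not \ref{proptrio} or \ref{lemW2} as you suggest. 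Hence $A_3\cap B_2\ne\emptyset$, so by Corollary \ref{nocycle}(iii) $I_0=A_3\cup B_2$ and $\abs{A_3}+\abs{B_2}\ge p+1$; since $h\le 2+(p-\abs{A_3})$ and $k\le 1+(p-\abs{B_2})$, one gets $\abs{\cF}\le 1+h+k\le p+3$. Without this step (or an equivalent one), your argument establishes the proposition only for $n\le 2p+2$, so as it stands the proof is incomplete.
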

\begin{proof}

For $I\in\cF$, we write  
$$I=A\cup X, \text{\ with \ } A=I\cap I_0\ne\emptyset 
\text{\ and \ } X=I\smallsetminus A\,.$$
 From \ref{excompo} it follows that  $I$ 
is uniquely specified by $A$ (or equivalently by $X$).
We shall now describe the set
$$\cF_0=\{I\cap I_0,\, I\in\cF \}$$
in one-to-one correspondence with $\cF$, and prove that
it consists of one or two totally ordered sequences, except in the
following case.
\begin{lemma}\label{lem3I0} 
Let $I=A\cup X$, $I'=A'\cup X'$ and $I"=A"\cup X"$
be three elements of $\cF\smallsetminus\{I_0\}$ such that $A$, $A'$ and
$A"$ satisfy no pairwise inclusions.
Then $|\cF|=4$.
\end{lemma}
{\sl Proof of the lemma.} From Corollary \ref{nocycle} we see
that the sets $X$, $X'$ and $X''$ are pairwise disjoint. 
First, we prove that $A$, $ A'$ and $A''$  are pairwise disjoint.
Otherwise suppose for instance $A\cap A'\ne \emptyset$, and thus,  by 
  Corollary \ref{nocycle}, $I_0=A\cup A'$, i.e. $I_0\smallsetminus A\subset
  A'$;  since $A"$ is not included in  $A'$, it is no more included in
$I_0\smallsetminus A$ i.e. $A"\cap A\ne \emptyset$; and of course we also
have $A"\cap A'\ne \emptyset$. In the configuration $\{I,I',I"\}$,
the indices in  $X$, $X'$ and $X'$  have weight one, thus by
Proposition \ref{proptrio} $I\cap I'\cap I"=\emptyset$, so that
the indices in 
$A\cap A'$, $A'\cap A"$ and $A\cap A"$ have weight $2$, and 
 $\{I,I',I"\}$ is a cycle, a contradiction: $A$, $A'$ and $A"$ are
 pairwise disjoint as announced. Now in the configuration 
 $\{I,I',I", I_0\}$, the sets of weight one in $I$, $I'$, $I"$ and
 $I_0$  are respectively equal to $X\ne\emptyset$, $X'\ne\emptyset$,
 $X"\ne\emptyset $  and 
$I_0\smallsetminus(A\cup A'\cup A")$. If this last set were not empty, we
should obtain from Proposition \ref{lemW1}
$|X|=|X'|=|X"|=|I_0\smallsetminus(A\cup A'\cup A")|=1$, thus 
$|A\cup A'\cup A"|=3(p-1)=p-1$, i.e. $p=1$, absurd.
So $I_0=A\cup A'\cup A"$ is a partition of $I_0$.
Let $J=B\cup Y$ be another element of $\cF$. Then $B=J\cap I_0$
satisfies an inclusion with at most one of the sets $A$, $A'$, $A"$
(since these sets are mutually disjoint, and so are their complements
$X$, $X'$ and $X"$). Thus $B$ satisfies no inclusion with at least 
two among $A$, $A'$, $A"$, say $A$ and $A'$. Thus $I_0=A\cup A'\cup B$ 
is a partition of $I_0$, and $B=A"$, a contradiction.
Thus  $\cF=\{I_0, I,I',I"\}$.
\qed

\medskip

If $\cF_0=\{I\cap I_0,\, i\in\cF\}$   is a totally ordered family, we have
$\abs{\cF_0}=\abs{\cF}\le \abs{I_0}=p$, and Proposition \ref{theoI0}
is proved. The same conclusion holds in 
the situation of Lemma \ref{lem3I0}. 
 We therefore consider in $\cF$
$$I=A\cup X \text{ \,and\, }  J=B\cup Y \text{ \,  such that \,}
 A\not\subset B \text{ \, and \,} A\not\supset B\,,$$
and may suppose that 
for any  $K=C\cup Z\in \cF$,  $C\in\cF_0$ satisfies an inclusion
with $A$ or $B$, or equivalently (by \ref{nocycle}),
 $Z$ satisfies an inclusion with
$X$ or $Y$, which are disjoint. If $Z$ satisfies an inclusion with
both $X$ and $Y$, i.e. if $Z\supset X\cup Y$, then
by Proposition \ref{proptrio} applied to $\{I,J,K\}$ (since the
indices of $C\ne \emptyset$ have  weight~$3$, and those of
 $A\sm B\ne \emptyset$ and $B\sm A\ne \emptyset$
have weight~$1$), $Z$ coincides with $X\cup Y$, and $C$ with $A\cap B$.

Now  choose a  pair $(A,B)$ with  $\abs{A}$ and $\abs{B}$ 
minimal: if $C\ne A\cap B$ satisfies an inclusion with $A$
(resp. $B$), it contains $A$ (resp. $B$). 
[{\small If $C\varsubsetneq A$, from the 
minimality of the pair $(A,B)$, $C$ satisfies an inclusion with $B$
too, and we just saw that $C=A\cap B$.}]
 Thus, apart from $I_0$ and (possibly) $A\cap B$, $\cF_0$ is union
of two disjoint sets
$$\cA=\{ C\in\cF_0\,\mid\, A\subset C\varsubsetneq I_0\} \text{ \, and \,} 
\cB=\{ C\in\cF_0\,\mid\, B\subset C\varsubsetneq I_0\}\,.$$
{\sl These sets are totally ordered by inclusion}, as we now prove.
Consider for instance in $\cF$ two distinct
elements $K=C\cup Z\in \cF$ and $K'=C'\cup Z'\in \cF$
with $C\supset A$ and $C'\supset A$, i.e. by \ref{nocycle}, $Z$ and
$Z'$ included in $X$. Thus $Z$ and $Z'$ are disjoint from $Y$, and 
by \ref{nocycle} again, $B$ satisfies no inclusion with $C$ or $C'$.
Since $\abs{\cF}\ge 5$, Lemma \ref{lem3I0} implies that $C$ and $C'$
satisfy an inclusion: the set $\cA$ is totally ordered by inclusion,
and so is $\cB$.
We then have 
$$\abs{\cA}\le p-\abs{A}\text{ \, and \,} \abs{\cB}\le p-\abs{B}\,.$$
We have to  consider two cases:

{\sl case $A\cap B\ne\emptyset$, in particular $n=2p$}.
It follows from Corollary \ref{nocycle} that
$I_0=A\cup B$ and then $|A|+|B|=p+|A\cap B|\ge p+1$.
It implies $\abs{\cA}+\abs {\cB}\le 2p-\abs{A}-abs{B}\le p-1$,
and taking into account $I_0$ and $A\cap B$, $\cF=\cF_0\le p+1$ as
required.

{\sl case $A\cap B=\emptyset$}. 
We then have $\cF_0=\{I_0\} \cup \cA\cup \cB$,
where $\cA$ and $\cB$ are totally ordered sequences
$A= A_1\subsetneq A_2\subsetneq \dots \subsetneq A_k\subsetneq I_0 $
and $B= B_1\subsetneq B_2\subsetneq \dots \subsetneq B_h\subsetneq I_0 $ 
(every pair $(A_i,B_j)$ without inclusion),with for instance 
$1\le k\le h\le p-1$.
We then have $\abs{\cF}\le 1+h+k$. If $k=1$, we obtain $\abs{\cF}\le p+1$,
and Proposition \ref{theoI0} is proved in this case.
The same conclusion holds if $h\le 2$, for instance if $p=3$, since
then  we obtain $\abs{ \cF }\le 5$. We thus suppose $h\ge 3$ and
$k\ge 2$. and consider  the four elements
of $\cF$ corresponding to $A$, $B$, $A_3$ and $B_2$, say
 $I=A\cup X$, $J=B\cup Y$, $I_3=A_3\cup X_3$ and $J_2=B_2\cup Y_2$,
with  $X$ and $Y$ disjoint and $X_3\subsetneq X$ and $Y_2\subsetneq Y$.
Their respective
subsets of weight~$1$ are  $X\sm X_3\ne\emptyset$, 
$Y\sm Y_2\ne\emptyset$, $A_3\sm (A\cup B_2)$ and $B_2\sm
(A_3\cup B$). If $A_3\cap B_2$ were empty,
we should have $\abs{ A_3\sm (A\cup B_2)}=\abs {A_3\sm A}\ge 2$ and 
$\abs{ B_2\sm (A_3\cup B}=\abs{ B_2\sm B}\ge 1$, a contradiction with
Proposition \ref{lemW1}. Thus $A_3\cap B_2$ is not  empty, and from
\ref{nocycle}, it follows that $\abs{A_3}+\abs{B_2}\ge p+1$.
Now from  $h=\abs{\cA}\le 2+(p-A_3)$ and  $k\le 1+(p-B_2)$ we obtain
$\cF\le 4+2p-(\abs{A_3}+\abs{B_2})\le p+3$ as required.
\end{proof}

 \medskip

We know that (for $p>1$), the family $\cF$ contains at most
three pairwise disjoint elements. We examine now this case.
\begin{prop}\label{theo123}
If $\cF$ contains  three pairwise disjoint elements, then $\cF\le p+3$.
\end{prop}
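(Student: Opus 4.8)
The plan is to start from three pairwise disjoint $I_1,I_2,I_3\in\cF$ and analyse how any further $J\in\cF$ meets them. Since $|I_1|+|I_2|+|I_3|=3p$ and these sets are disjoint, we have $m\ge 3p$, hence by hypothesis $n\ge 3p$; in particular $n\ge 2p+2$ (recall $p\ge 3$, and the case $(p,n)=(4,8)$ has been discarded, while $p=3$ would force $n\ge 9$). Fix a fourth element $J\in\cF$. It cannot be disjoint from all three $I_k$ (no four pairwise disjoint $I$'s exist for $p>1$, by \ref{excompo}), so $J$ meets at least one, say $J\cap I_1\ne\emptyset$. First I would show $J$ meets \emph{at most two} of the $I_k$: if $J$ met all three, then in the configuration $\{I_1,I_2,I_3,J\}$ every index of $J$ would have weight $\ge 2$ while the $I_k$ still carry indices of weight one (each $I_k\sm J\ne\emptyset$ since $|I_k|=|J|=p$ and $J$ is not contained in any single $I_k$), and one extracts three of these sets forming a configuration contradicting \ref{lemW1} or \ref{proptrio}. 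So each $J\in\cF\sm\{I_1,I_2,I_3\}$ meets exactly one or exactly two of the $I_k$.

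Next I would partition $\cF\sm\{I_1,I_2,I_3\}$ according to which $I_k$ are met, and bound each block. For $J$ meeting only $I_1$: write $J=A\cup X$ with $A=J\cap I_1$, $X=J\sm I_1$ disjoint from $I_1\cup I_2\cup I_3$ except possibly... no, $X$ is disjoint from $I_1$ but may meet $I_2,I_3$ — so actually "meeting only $I_1$" already forces $X\cap(I_2\cup I_3)=\emptyset$. Apply \ref{nocycle}/\ref{proptrio} to triples $\{I_1,J,J'\}$ of such $J$'s: the sets $A=J\cap I_1$ then form a totally ordered chain inside $I_1$ (this is the same argument as in the proof of \ref{theoI0}, using \ref{lem3I0} to exclude three pairwise non-comparable $A$'s, since $|\cF|\ge 5$ when there is anything to prove), giving at most $p-1$ such $J$'s but in fact I expect the correct local bound to be $2$ per $I_k$, coming from combining the chain condition with the constraint that $J$ avoids the other two $I_k$'s entirely. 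For $J$ meeting exactly $I_1$ and $I_2$ (and disjoint from $I_3$): here $A=J\cap I_1\ne\emptyset$, $A'=J\cap I_2\ne\emptyset$, and $X=J\sm(I_1\cup I_2)$ is disjoint from $I_3$; two such $J,J'$ with $\{I_1,I_2,J\}$ and a third set give cycles unless tight inclusions hold, and I expect at most one such $J$ for each of the three pairs $\{I_1,I_2\},\{I_1,I_3\},\{I_2,I_3\}$.

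Summing: $|\cF|\le 3 + (\text{at most }2\text{ per }I_k) + (\text{at most }1\text{ per pair}) = 3+?$, which overshoots $p+3$ for small $p$, so the bookkeeping must be sharper — the real mechanism is that when $p$ is small the chains inside $I_k$ are short. I would instead argue directly: let $a_k$ be the number of $J\in\cF$ with $J\cap I_k\ne\emptyset$ and $J$ disjoint from the other two; let $b$ be the number meeting two of the $I_k$. From the chain structure, $a_k\le p-1$, but more usefully $\sum a_k + b$ is controlled because each $J$ counted has $|J\cap(I_1\cup I_2\cup I_3)|$ bounded and the $I_k$ partition at least $3p$ of the $n\ge 3p$ coordinates, leaving at most $n-3p$ "free" indices; a counting/dimension argument via a configuration as in \ref{lemW2}, using that $\{I_1,I_2,I_3\}\cup\{\text{one }J\}$ plus the singleton $e_i$'s for free indices satisfies a congruence mod $2L$, forces freeness to fail unless the count is small, pinning $|\cF|\le p+3$. \textbf{The main obstacle} I anticipate is precisely this last global count: showing that the at-most-$2$-per-$I_k$-plus-at-most-$1$-per-pair local bounds, which naively give $|\cF|\le 12$, actually collapse to $p+3$, which requires exploiting that the chains of $A$'s inside each $I_k$ cannot all be long simultaneously — equivalently, a single careful application of Proposition \ref{lemW2} to a well-chosen four-element subconfiguration among the $J$'s meeting a common $I_k$, forcing $n=m$ or $m+1$ and hence bounding how many free indices and how many $J$'s can coexist.
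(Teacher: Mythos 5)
There is a genuine gap, and also a false intermediate claim. First, your assertion that any further $J\in\cF$ meets \emph{at most two} of $I_1,I_2,I_3$ is wrong: the paper's set $\cF_{123}$ of elements meeting all three can be nonempty (it can contain up to three elements, occurring for $p=3$ with $n=9$ or $10$; e.g. $I_1=\{1,2,3\}$, $I_2=\{4,5,6\}$, $I_3=\{7,8,9\}$, $J=\{1,4,7\}$). Your proposed contradiction does not materialize because any triple extracted from $\{I_1,I_2,I_3,J\}$ has $W_3=\emptyset$ (the $I_k$ are pairwise disjoint), so neither Proposition \ref{proptrio} nor the weight-$3$ clause of Proposition \ref{lemW1} applies. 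Second, you miss the decisive first step: for every $J\in\cF$, the set $X=J\smallsetminus(I_1\cup I_2\cup I_3)$ is \emph{empty}. This follows from Proposition \ref{lemW1} applied to $\{I_1,I_2,I_3,J\}$: if $X\ne\emptyset$, each of the four sets carries an index of weight one, hence exactly one, forcing $p=3(p-1)+1$, i.e. $p=1$. In particular a $J$ meeting only $I_k$ equals $I_k$, so your ``at most $2$ per $I_k$'' blocks are vacuous and the chain arguments you sketch for them are beside the point.

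The part you yourself flag as ``the main obstacle'' — collapsing the sum of local bounds ($3+2\cdot 3+3=12$ in your accounting) down to $p+3$ — is exactly what your proposal does not supply, and the paper's mechanism for it is different from the global $\bmod\ 2L$ counting you suggest. The paper proves (Lemma \ref{prop123}) that any two elements of $\cF\smallsetminus\{I_1,I_2,I_3\}$ lie in the \emph{same} block $\cF_{ij}$ or $\cF_{123}$, so at most one block is nonempty; one then only needs $\abs{\cF_{ij}}\le p-1$ (the traces $A_i=I\cap I_i$ form a totally ordered chain, or a complementary pair) and $\abs{\cF_{123}}\le 3$ (its elements are pairwise disjoint, then \ref{excompo}), giving $\abs{\cF}\le 3+\max(p-1,3)\le p+3$. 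The case analysis establishing this ``all in one block'' dichotomy, via Corollary \ref{nocycle} with $I_1$ in the role of $I_0$, is the substantive content of the proof and is absent from your plan. As written, the proposal would not close.
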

\begin{proof}
This  will follow from the more precise result \ref{prop123}, for which
we need some more notation.

Let $I_1, I_2,I_3$ be three elements of $\cF$ pairwise disjoint.
For every $I\in \cF$ distinct from the $I_j$ we consider the partition
$I=A_1\cup A_2\cup A_3\cup X$,  where  $ A_j=I\cap I_j$.
Actually, $X$ is empty.
Otherwise, we could apply  Proposition \ref{lemW2} to the subset
$\{I_1,I_2,I_3,I\}\in\cF^4$, whose sets of indices of weight one
$I_j\smallsetminus A_j, j=1,2,3$ and $X$ should have just one element,
and $I$ should have  $p=3(p-1)+1$  elements, i.e. $p=1$, a
contradiction.
We thus have 
$$I=A_1\cup A_2\cup A_3,\qquad  \text{  where  } A_j=I\cap I_j.$$
We introduce the following subsets of $\cF$:

For $i=1,2,3$, $\cF_i$ is the set of $I\in\cF$ with only $A_i$
non-empty. We have just proved that  $\cF_i=\{I_i\}$.

For $1\le i<j\le 3$, $\cF_{ij}$ is the set of $I\in\cF$ 
with only $A_i$ and $A_j$ non-empty. 

Eventually, $\cF_{123}$ is the set of $I\in\cF$ intersecting $I_1$,
$I_2$ and $I_3$. 

\begin{lemma}\label{prop123}
The four  subsets $\cF_{ij}$ and $\cF_{123}$ are empty but one. We have
$\abs{ \cF_{ij}}\le p-1$  and  $\abs{\cF_{123}}\le 3$.
\end{lemma}
{\sl Proof of \ref{prop123}.}\quad We may suppose that $\abs{\cF}\ge 5$.
Let  $I\ne I'$ be two elements of $\cF$ distinct from $I_1,I_2,I_3$.
There exists $i\in \{1,2,3\}$  
such that both $A_i$  and $A'_i$ are non-empty, for instance 
we  suppose $A_1$ and $A'_1$ non-empty, and we are in the situation
described by  Corollary \ref{nocycle} with $I_1$ in the r\^ole of $I_0$.
We have to consider two cases.

{\sl Case 1.} $A'_1\subsetneq A_1$. Then by \ref{nocycle} we have
$A_2\cup A_3\subset A'_2\cup A'_3$, i.e. 
 $A_2\subset A'_2$  and
 $A_3\subset A'_3$. 
As $I$ is distinct from $I_1$, $A_2$ for instance is non-empty, and so
is $A'_2$. Then by \ref{nocycle}, the inclusion $A_2\subset A'_2$
implies now $A'_3\subset A_3$, and thus $A_3= A'_3$. 
Since $I$ and $I'$ are distinct, from \ref{excompo}  we conclude that
$A_3$ and $A'_3$ are empty, i.e. that $I$ and $I'$ both lie in
$\cF_{12}$.
 
{\sl Case 2. $A_1$ and $ A'_1$ satisfy no inclusion.}
Then by  \ref{nocycle} 
$A_2\cup A_3$ and $A'_2\cup A'_3$ are disjoint, i.e 
 $A_2\cap A'_2=A_3\cap A'_3=\emptyset$. 
Then 
$I$ and $I'$ do not belong to distinct $\cF_{ij}$,
as we now prove.
If $(I,I')$ lies in $\cF_{12} \times \cF_{13}$, Proposition \ref{lemW1},
applied to the four elements  $I$, $I'$, $I_2$, $I_3$, gives 
$|I_2\smallsetminus A_2|=|I_3\smallsetminus A'_3|=1$, thus $A_1$ and
$A'_1$ are disjoint singletons, and the same proposition applied to 
 $\{I,I',I_1,I_3\}$ gives, since $p\ge 3$  ( $I_1\supsetneq A_1\cup
 A'_1$) $|A_2|=1$ and thus $|I|=p=1+1$, a contradiction.

We  may assume for instance that $A_2$ and $A'_2$ are both non-empty. Then
permuting $I_1$ and $I_2$ we conclude that $A_1$ and $A'_1$ are
disjoint two. We then have, for $i=1,2,3$, $|A_i|+|A'_i|\le p$.
Since $\sum_i(|A_i|+|A'_i|)=|I|+I'|=2p$, we have two possibilities. 

1)  $|A_1|+|A'_1|=|A_2|+|A'_2|= p$, and thus $A_3=A'_3=\emptyset$: 
$I$ and $I'$ lie in  $\cF_{12}$. Note that in this case,
$\cF_{12}$ reduces to the pair ${I,I'}$, since $I'=(I_1\sm A_1)\cup
(I_2\sm A_2)$ is uniquely determined by $I$.

2) Otherwise, there are at least two sums  $|A_i|+|A'_i|< p$, say 
 for $i=2$ and $i=3$. By use of Proposition \ref{lemW1}
applied to the set $\{I,I',I_2,I_3\}$, we obtain (since $A_1\cap
A'_1=\emptyset$) $|A_1|=|A'_1|=|I_2\smallsetminus (A_2\cup A'_2)|=
|I_3\smallsetminus (A_3\cup A'_3)|=1$; since the third sum
$|A_1|+|A'_1|=2$ is also $<p$, we obtain $|A_i|+|A'_i|=p-1$ for all
$i$. We then have $p=3$, and for all $i$, $A_i$ and $A'_i$ are
disjoint singletons. Thus $I$ and $I'$ both 
lie in $\cF_{123}$. [{\small Note that Proposition \ref{lemW2} applied
  to the star $I,I_1,I_2,I_3$ shows that in this case, $n=9$ or $10$}]

\medskip

We conclude that two distinct elements $I$ and $I'$ of $\cF$ belong
to the same subset $\cF_{ij}$ or $\cF_{123}$, therefore only one of
them is non-empty. Moreover, the elements of $\cF_{123}$ are pairwise
disjoint, thus by \ref{excompo} there are at most three of them.
Eventually, if $\cF_{ij}$ is not empty, it consists either of
a disjoint pair $(I, (I_i\cup I_j)\sm I)$, or of at most $p-1$
elements  $I=A_i\cup A_j$, where the set $\{A_i\}$ is totally ordered
by inclusion. 
This completes the proof of \ref{prop123} and thus of 
Proposition \ref{theo123}.

\bigskip

{\sl We now come back to the proof of Theorem \ref{theoI1I2}}.
Taking into account the result of \ref{theoI0} and \ref{theo123}, we may
 and will assume now  that $\cF$ contains two disjoint elements, say $I_1$
 and $I_2$, such that every $I\in\cF$ intersects at least one of them.

In other terms,  there is a partition $\cF=\cF_1\cup \cF_2\cup \cF_{1,2}$, with

 $\cF_1=\{I\in\cF\quad\mid\quad I\cap I_1\ne \emptyset\nd I\cap I_2=
 \emptyset\},$

 $\cF_2=\{I\in\cF\quad\mid\quad I\cap I_2\ne \emptyset\nd I\cap I_1=
 \emptyset\},$

$\cF_{1,2}=\{I\in\cF\quad\mid\quad I\cap I_1\ne \emptyset\nd I\cap
I_2\ne \emptyset\}$.

Since by Proposition \ref{theoI0} we know that $|\cF_1\cup \cF_{1,2}|\le
p+3$ ($p+2$ if $p=3$), the proof of Theorem \ref{theoI1I2} 
will result from the following proposition.

\begin{prop} \label{propF1F2}
We have $\min(|\cF_1|,|\cF_2)|)\le 3$, where equality holds only
when $n\ge 2p+2$.
\end{prop}

We keep the notation $I=A_1\cup X$ for an element $I\ne I_1$ in
$\cF_1$, where $A_1=I\cap I_1$ and $X=I_1\smallsetminus A_1$, and similarly
$J=B_2\cup Y$ ($B_2=J\cap I_2$, $Y=I_2\smallsetminus B_2$) for an element
$J\ne I_2$ of $\cF_2$.

\begin{lemma}\label{lemF1F2}
Let $I=A_1\cup X$  be an element of $\cF_1\sm \{I_1\}$.
Then there is \emph{ at most one } $J=B_2\cup Y\in\cF_2$ 
such that $Y$ satisfies no inclusion with $X$, and this may occur
only when $\abs{X}=1$ (and obviously $Y$ also is a singleton).
\end{lemma}
{\sl Proof of the lemma}. Let $J=B_2\cup Y\in\cF_2$ be distinct from
$I_2$ such that $Y\not\subset X$ and $Y\not\supset X$. 
With respect to the set $\{I_1,I_2,I,J\}$, the subsets of indices of
weight one in $I_1,I_2,I,J$ are respectively are $I_1\smallsetminus
A_1$,   $I_2\smallsetminus B_2$,  $X\smallsetminus
Y$ and  $Y\smallsetminus X$, all of them non-empty, and by  
Proposition \ref{lemW1} all of them singletons. From
$|A_1|=|B_2|=p-1$, follows $|X|=|Y|=1$.  
Now, let $J'=B'_2\cup Y'$
 be another solution in $\cF_2$, i.e. with $Y'$ singleton distinct
from $X$, and also from $Y$ since $J'\ne J$. The subsets of indices of
weight one in $I_1,I,J,J'$ respectively are $I_1\smallsetminus A_1\ne
\emptyset$, $X$,  $(B_2\smallsetminus B'_2)\cup Y$,  $(B'_2\smallsetminus
B_2)\cup Y'$, the last two subsets with $p-1>1$ elements, which
contradicts Proposition \ref{lemW1}. The solution $J$ is unique.
\qed

\medskip

{\sl Proof of Proposition \ref{propF1F2}.} 
For it we may and will assume that $\cF_1$ contains at least two elements
$I=A_1\cup X$ and $I'=A'_1\cup X'$ distinct from $I_1$. We fix such a
pair $I,I'$ and suppose for instance $|X|\le |X'|$.
We now prove that there is in $\cF_2$  at most one $J=B_2\cup Y$ with
$Y$ satisfying an inclusion with $X$. 

1) First suppose that $A_1$ and $A'_1$  satisfy an inclusion, for instance 
$A'_1\subsetneqq A_1$. Then with respect to the set $\{I,I',J\}$, $I$
and $J$ have indices of weight one (those of $A_1\smallsetminus A'_1$
and $B_2$ at least). Since $I\cap I'\cap J=X\cap Y$ is not empty,
Proposition \ref{proptrio} implies that $I'$ has no index of weight one, i.e.
$X'\subset X\cup Y$. The inclusion between $X$ and $Y$ is thus
$X\subset Y$, and we conclude that $Y$ contains $X$ and $X'$. 
Now, let 
$J'=B'_2\cup Y'$ be another solution with $Y'$ satisfying 
an inclusion with $X$. Then $Y$ and $Y'$ containing $X'$ should
intersect, thus satisfy an inclusion, and 
 $B'_2$ and $B_2$ too.
Hence we might exchange the r\^ole of the pairs $(I,I')$ and $(J,J')$,
and conclude that
 $X$ and $X'$ must contain $Y$ and  $Y'$, thus $X=X'=Y'=Y$, a contradiction.

2) We now suppose that $A_1$ and $A'_1$ satisfy no inclusion. We may have
two types of solutions $J=B_2\cup Y$, with $Y$ satisfying an inclusion
with $X$.

$\bullet$ \emph{ Type I: $Y$  satisfies an inclusion with $X'$ too}.
Then we  must have $Y\supset X\cup
X'$ since  $X$ and $X'$ are disjoint.  Actually, if $Y$ contains
strictly $X\cup X'$, we may apply Proposition \ref{lemW1} to the set
$\{I,I',J,I_2\}$, with sets of indices of weight one
$A_1\smallsetminus A'_1$, $A'_1\smallsetminus A_1$, $I_2\smallsetminus
B_2$ and $Y\smallsetminus X\cup X'$, and conclude $|B_2|=p-1$, thus
$|Y|=1$, a contradiction. Hence we have $Y=X\cup X'$, which determines
entirely  $J$ in $\cF_2$. 

$\bullet$ \emph{ Type II: $Y$  satisfies no inclusion with $X'$ }.
We know by Lemma
\ref{lemF1F2} that such a solution is unique and implies
$|Y|=|X'|=1$, and thus  $|X|=1$ too ($|X|\le |X'|$).  More precisely
since $Y$ and $X$ satisfy an inclusion, we have $Y=X=\{x\}$ and  
$X'=\{x'\},\, x'\ne x$. 

It remains to prove that we cannot have simultaneously in $\cF_2$ 
solutions of types I and II. We then suppose $X=\{x\}$ and  
$X'=\{x'\},\, x'\ne x$, and we consider in $\cF_2$ an element 
$J=B_2\cup Y$ of the first type, i.e. with $Y=\{x,x'\}$, and 
an element of the second type $J'=B'_2\cup Y'$ with $Y'=\{x\} $. 
Since $Y'\subset Y$, we have $B_2\subset B'_2$. We may thus apply 
the part 1) to $I$, $J$ and $J'$ (since $X$ satisfies an  inclusion 
with $Y$), and conclude that $X$ must contain $Y$ and $Y'$,
a contradiction.

We have then proved that in every case there is at most
one element $J=B_2\cup Y$ in $\cF_2$ such that $Y$ satisfies an
inclusion with $X$, and by Lemma \ref{lemF1F2} we obtain
$\abs{\cF_2}\le 1+1+1$.

 In order to complete the proof of \ref{propF1F2},
it remains to observe that if $\cF_2$ contains (apart from $I_2$) two elements 
$J=B_2\cup Y$ and $J'=B'_2\cup Y'$, then $|Y\cup Y'|\ge 2$ (since
$Y\ne Y'$ by \ref{lemF1F2} and \ref{nocycle}) and therefore
 $\abs{\cup_{I\in \cF}I}\ge \abs {I_1} + \abs {I_2}+2=2p+2$.

\end{proof}
\medskip

\section{Families  with cycles of length $3$}

\begin{theorem}\label{theocycle}
Let $\{x_I,I\in \cF\}$ be a set of minimal vectors of the same type
$\abs{I}=p\ge 3$. We suppose that $\cF$ contains a cycle of length
$3$, and that $(p,n)\ne (4,8)$. 
Then 

(1) the dimension $n$ of the lattice satisfies 
$2p+1\le n\le 3p-2$ ;

(2) we have $\abs{\cF}\le n$, and even, if $n=3p-2$ and $p\ge 4$, 
$\abs{\cF}\le p+2$.
\end{theorem}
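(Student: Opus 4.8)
The plan is to fix three elements $I_1,I_2,I_3$ of $\cF$ whose $\sim$-graph (as a family of three vectors) is a $3$-cycle and to read everything off from this triangle. The first move is a rigidity remark: apply Proposition \ref{proptrio} to $\{I_1,I_2,I_3\}$. Since $(p,n)\ne(4,8)$, that proposition says that if $W_3=I_1\cap I_2\cap I_3$ were non-empty the $\sim$-graph of the triple would be a path; being a $3$-cycle, it forces $W_3=\emptyset$. Hence every index of $M:=I_1\cup I_2\cup I_3$ has weight $1$ or $2$, and comparing $|I_1|+|I_2|+|I_3|=3p$ with formula $(1)$ gives $|W_1|+2|W_2|=3p$, so $m:=|M|=|W_1|+|W_2|=3p-|W_2|$; each of the three edges of the cycle means one of $I_1\cap I_2,\ I_1\cap I_3,\ I_2\cap I_3$ contains a weight-$2$ index, so $|W_2|\ge3$ and $m\le 3p-3$.

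Part $(1)$ then falls out. Proposition \ref{lemW2}, applied to the $3$-cycle, gives $n\in\{m,m+1\}$, and $|W_2|=3$ in the case $n=m+1$; hence $n\le m+1\le 3p-2$, with equality $n=3p-2$ only when $n=m+1$, $|M|=n-1$, and the three pairwise intersections $I_1\cap I_2$, $I_1\cap I_3$, $I_2\cap I_3$ are singletons. For the lower bound, $p=|I_1|\le n/2$ by the type normalisation; if $p=n/2$, the convention prescribing $1\in I$ for every type-$n/2$ vector would put $1$ into all three $I_j$, i.e.\ $1\in W_3=\emptyset$, which is absurd; so $p<n/2$ and $n\ge 2p+1$.

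For part $(2)$ I would dissect $\cF$ by the way its members meet the triangle. For $I\in\cF\smallsetminus\{I_1,I_2,I_3\}$ set $A_j=I\cap I_j$, a proper subset of $I_j$ (else $I=I_j$). A one-line count forbids an $I$ meeting only one $I_j$: if $I\cap I_2=I\cap I_3=\emptyset$ then $I\subseteq\big(I_1\smallsetminus(I_2\cup I_3)\big)\cup(\{1,\dots,n\}\smallsetminus M)$, a set of size at most $(p-2)+1<p$ since $|I_1\cap I_2|,|I_1\cap I_3|\ge1$ and $n-m\le1$. Thus $\cF=\{I_1,I_2,I_3\}\sqcup\cF_{12}\sqcup\cF_{13}\sqcup\cF_{23}\sqcup\cF_{123}$, where $\cF_{jk}$ gathers the $I$ meeting exactly $I_j$ and $I_k$, and $\cF_{123}$ the ones meeting all three. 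On each $\cF_{jk}$ the map $I\mapsto A_j$ is injective by Application \ref{excompo}. The key claims are: the sets $A_j$ coming from $\cF_{jk}$ organise into (at most two) inclusion-chains, so $|\cF_{jk}|$ is linear in $p$; the three sets $\cF_{12},\cF_{13},\cF_{23}$ cannot all be large; and $\cF_{123}$ reduces to a bounded number of near-disjoint sets. The chain property comes from Corollary \ref{nocycle}: two members $I,I'\in\cF_{jk}$ with non-comparable $A_j,A_j'$ put us in the ``cycle'' or ``disjoint'' case, and since $I,I'$ avoid the private part of the omitted set $I_i$, the union $I\cup I'\cup I_j$ lies in a set much smaller than $n$, so Proposition \ref{lemW2} applied to the would-be cycle $(I,I',I_j)$ — together with Lemma \ref{lem3I0}, applied to $\cF_{jk}\cup\{I_j\}$, to rule out a laminar configuration of three — collapses this; in the rigid case $n=3p-2$ one even gets the single-chain bound $|\cF_{jk}|\le p-1$. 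That $\cF_{12},\cF_{13},\cF_{23}$ are not simultaneously large is obtained by confronting $I\in\cF_{12}$ with $J\in\cF_{13}$ inside the quadruple $\{I,J,I_2,I_3\}$ and invoking Proposition \ref{lemW1}; $\cF_{123}$ is handled through $\{I,I_1,I_2,I_3\}$ and Application \ref{excompo}. Summing the estimates and feeding in $n=3p-|W_2|$ (or $3p-|W_2|+1$) with $|W_2|=|I_1\cap I_2|+|I_1\cap I_3|+|I_2\cap I_3|$ gives $|\cF|\le n$ in general, and $|\cF|\le p+2$ when $n=3p-2$, $p\ge4$.

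The heart of the matter, and the main obstacle, is this step: establishing that each $\cF_{jk}$ is really governed by a (short) chain, controlling the mutual interaction of the three $\cF_{jk}$'s so that they cannot all be long at once, and above all treating the extremely tight regime $n=3p-2$, where one must keep track of exactly which indices each member of $\cF$ is allowed to use. One also has to separate out the small value $p=3$ and the alternative $n=m$ versus $n=m+1$; once the chain and interaction bounds are in place, combining them with the linear relations among $n$, $p$ and $|W_2|$ is routine.
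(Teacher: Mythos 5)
Part (1) of your argument is correct and is essentially the paper's: $W_3=\emptyset$ by Proposition \ref{proptrio}, $m=3p-\abs{W_2}\le 3p-3$, $n\in\{m,m+1\}$ by Proposition \ref{lemW2}, and $n\ne 2p$ because the normalisation $1\in I$ for type-$\tfrac n2$ vectors would force $1\in W_3$.

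For part (2), however, what you give is a plan rather than a proof, and the plan as stated has a genuine gap. Your decomposition is by \emph{which vertices} $I_j$ of the triangle an element $I$ meets ($\cF_{12},\cF_{13},\cF_{23},\cF_{123}$); the paper instead proves a structural dichotomy (its Lemmas \ref{lemnoAij} and \ref{lemAij=Eij}) showing that, apart from a handful of ``exotic'' elements occurring only for $(p,n)=(3,7)$ or $(4,10)$, every $I\in\cF\sm\{I_1,I_2,I_3\}$ contains \emph{exactly one full edge} $E_{ij}=I_i\cap I_j$, avoids the opposite set $E_k$ of weight-one indices of $I_k$, and avoids the index $m+1$. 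It is this fact that yields the per-class bound $\abs{\cF_k}\le p-\abs{E_{ij}}-1$ (Lemma \ref{lemE12full}), whose sum over the three classes is exactly $3p-\abs{W_2}-3=m-3$, giving $\abs{\cF}\le m\le n$. Your bound ``each $\cF_{jk}$ is governed by at most two inclusion-chains, so $\abs{\cF_{jk}}$ is linear in $p$'' is both unproven and quantitatively insufficient: three classes each of size roughly $p$ plus $\cF_{123}$ would give about $3p$, which exceeds $n$ (note $n$ ranges only from $2p+1$ to $3p-2$); the target $\abs{\cF}\le n$ needs the exact constants, not an $O(p)$ estimate. Moreover your classes mix elements the paper treats uniformly (an $I$ containing $E_{12}$ may or may not meet $I_3$, so it lands in your $\cF_{12}$ or your $\cF_{123}$ depending on data irrelevant to the count), which makes it unclear how your chains would sum to $m-3$. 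Two further omissions: you never identify or bound the exotic elements (which do exist and must be shown incompatible with the others to keep $\abs{\cF}\le n$ at $(3,7)$ and $(4,10)$); and the tight case $n=3p-2$, where the bound improves to $p+2$, requires proving that at most one of the three classes can contain more than one element (the paper's argument via a would-be $5$-cycle $(I,I_1,I',I_3,I_2)$ and Proposition \ref{lemW2}) --- you flag this regime as ``the main obstacle'' but do not resolve it.
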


The section is devoted to the proof of the theorem. 
Let $(I_1,I_2,I_3)$ be in $\cF$ a fixed cycle for the relation
$\sim$. We use for this cycle the notation and rules of Section 2.
In particular $W_k\subset \cup_{h=1,2,3}I_h$ is 
the set of indices of weight $k$, $k=1,2,3\, $,  
and $m=\abs{I_1\cup I_2\cup I_3}$. 

(1) Since $(p,n)\ne (4,8)$  we know by \ref{proptrio}
 that $W_3=I_1\cap I_2\cap I_3$ is empty,
which allows us to prescribe $n\ge 2p+1$.
We thus  have
$$W_2=E_{12}\cup E_{23}\cup  E_{13} 
\quad \mid\quad  E_{ij}=I_i\cap I_j\ne\emptyset$$
since $I_i\sim I_j$. 
From the relations $m=\sum_k\abs{W_k}$ and
$3p=\sum_kk\abs{W_k}$ we obtain 
$$\abs{W_2}=3p-m\,;$$   
hence  $\abs{W_2}\ge 3$ reads $m\le 3p-3$, and the inequality
$n\le 3p-2$ follows from \ref{lemW2} ($n$ is equal to
$m$ or $m+1$).

For every  permutation $(i,j,k)$ of $\{1,2,3\}$,  let
$$E_k=I_k\sm(E_{ki}\cup E_{kj})$$ 
denote the set of
indices of weight $1$ in $I_k$. 
The condition $\abs{I_k}=p$ reads
 $p=\abs{E_k}+\abs{E_{ki}}+\abs{E_{kj}}$, which
implies $\abs{E_k}\le p-2$, and even $\abs{E_k}= p-2$ if $n=m+1$. 
Moreover it proves that $\abs{E_k}-\abs{E_{ij}}$ does not depend on $k$:
$$\Delta=\abs{E_k}-\abs{E_{ij}}=p-\abs{W_2}=m-2p\ge 0\,,$$
where equality holds when $n=m+1$ and thus 
(since $\abs{W_2}=3$ by \ref{lemW2})  
$(p,n)=(3,7)$.  We thus have $\abs{E_k}\ge 1$, with equality if and
only if $(p,n)=(3,7)$.

\medskip

In the following,
$$I=A_1\cup A_2\cup A_3\cup A_{12}\cup A_{23}\cup A_{13}\cup (I\cap
\{m+1\})\,,$$
$$\text{with }\mid A_i=I\cap E_i,\, A_{ij}=I\cap E_{ij}$$
denotes an element of $\cF$ distinct from $I_1,I_2,I_3$.

The discussion below is based on the $A_{ij}$, starting with the case
when they are empty.

\begin{lemma}\label{lemnoAij}
  If $I$ intersects no $E_{ij}$, 
then $n=m+1$ belongs to $I$ and $(p,n)$ is equal to $(3,7)$  or
$(4,10)$. In the first case there is at most one
such $I$, say $I=E_1\cup E_2\cup \{7\}$;
 in the second case there are at most two of them,
say $I=\{a_1, a_2,a_3,10\}$ and $I'=\{a'_1,a'_2,a'_3,10\}$, where
$E_i=\{a_i,a'_i\}$.
\end{lemma}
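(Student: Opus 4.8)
The plan is to analyze an element $I \in \cF$, distinct from $I_1,I_2,I_3$, that meets none of the $E_{ij}$, using the constraint that $I$ has exactly $p$ indices, all of weight $1$ with respect to the cycle $(I_1,I_2,I_3)$ (except possibly the index $m+1$ when $n=m+1$). Recall from part (1) of Theorem \ref{theocycle} that $|E_k| \ge 1$ for each $k$, with equality iff $(p,n)=(3,7)$, and that $|E_k|-|E_{ij}| = \Delta = m-2p \ge 0$, with $\Delta=0$ iff $(p,n)=(3,7)$.

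First I would apply Proposition \ref{lemW2} to the four-vector family $\{I_1,I_2,I_3,I\}$. Since $I$ meets no $E_{ij}$, the $\sim$-graph of this family on $\{I_1,I_2,I_3,I\}$ is still just the triangle on $\{I_1,I_2,I_3\}$ — wait, that is not a configuration covered by \ref{lemW2}. Instead the right move is to invoke \ref{lemW1}: the four sets $I_1,I_2,I_3,I$ each contain an index of weight $1$ (the sets $E_k$ are nonempty, and $I \setminus (I\cap\{m+1\})$ consists of weight-one indices since $I$ avoids all $E_{ij}$ and $W_3=\emptyset$). By \ref{lemW1} this forces the weight-one index in each $I_k$ to be unique, i.e. $|E_k|=1$ for all $k$, and no index has weight $3$ in the new family. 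From $|E_k|=1$ and $\Delta = |E_k|-|E_{ij}| \ge 0$ we get $|E_{ij}| \le 1$ for each pair, hence $|E_{ij}|=1$; then $p = |E_k|+|E_{ki}|+|E_{kj}| = 3$, and $m = 3|E_k|+3|E_{ij}| = 6$. Actually one must be careful: $|E_k|=1$ already forces $(p,n)=(3,7)$ by part (1), so $n=7=m+1$, and the single weight-one index of $I$ that is "extra" must be the index $m+1=7$ (all of $E_1\cup E_2\cup E_3$ is already used, and $|I|=3$ while $|E_1\cup E_2\cup E_3|=3$ would leave no room — so in fact $I$ consists of some of the $E_i$'s together with $7$). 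Counting: $I$ has $3$ indices, one of which is $7$, so $I = \{a,b,7\}$ with $a,b$ weight-one indices drawn from $E_1\cup E_2\cup E_3 = \{a_1,a_2,a_3\}$ (writing $E_i=\{a_i\}$); by \ref{excompo} applied with $I_1$ (say $A_1 = I\cap I_1$), at most one such $I$ exists for a given intersection pattern, and a short check of which two of $a_1,a_2,a_3$ can appear — forced by \ref{lemW1} again on $\{I_1,I_2,I_3,I\}$ so that no $I_k$ loses its weight-one index — pins down $I = E_1\cup E_2\cup\{7\}$ up to relabeling, and shows it is unique. This settles the case $(p,n)=(3,7)$.

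The remaining case is $\Delta > 0$, i.e. $|E_k| \ge 2$, where the argument above via \ref{lemW1} fails because the $E_k$ need not be singletons. Here I would again look at $\{I_1,I_2,I_3,I\}$: the weight-one sets are $E_1,E_2,E_3$ (nonempty) and $I\setminus(I\cap\{m+1\})$ (nonempty, since if $I\subseteq\{m+1\}$ then $|I|=1<p$). If $n=m$, then $I\subseteq W_1\cup W_2\cup W_3$ but $W_3=\emptyset$ and $I$ avoids $W_2$, so $I\subseteq W_1$; but then $\{I_1,I_2,I_3,I\}$ has four sets each with a weight-one index and no weight-$3$ index is possible either — yet now $I$ has $p \ge 2$ weight-one indices while \ref{lemW1} demands the weight-one index in each of the four sets be unique, forcing $|I|=1$, contradiction. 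So $n=m+1$, giving $|E_k|=p-2$ exactly. Then $I$ has the index $n=m+1$ plus $p-1$ weight-one indices spread among $E_1,E_2,E_3$; by \ref{lemW1} (uniqueness of the weight-one index in each $I_k$, applied to $\{I_1,I_2,I_3,I\}$) we need $|E_k \setminus (I\cap E_k)| $ handled so that no $I_k$ acquires two weight-one indices relative to the new family — more precisely, \ref{lemW1} forces $|W_1\cap I_k| \le 1$ in the four-set family, which means $I$ must contain all but one element of each $E_k$; counting $3(p-3)$ such forced indices plus the index $n$ against $|I|=p$ gives $3(p-3)+1 \le p$, i.e. $2p \le 8$, so $p = 3$ or $p=4$, and $p=4$ forces $I$ to contain exactly $p-1=3$ of these, one $a_i$ from each $E_i=\{a_i,a_i'\}$, together with $n=10$. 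The remaining bookkeeping — that there are at most two such $I$ (the two "transversals" $\{a_1,a_2,a_3,10\}$ and $\{a_1',a_2',a_3',10\}$, the other transversals being excluded by \ref{lemW1} or \ref{excompo} against $I_1,I_2,I_3$) — is a finite check. I expect the main obstacle to be exactly this last $(4,10)$ bookkeeping: showing that among the $2^3=8$ transversals through index $10$, all but the two "complementary" ones are killed, which requires carefully re-running \ref{lemW1} on triples like $\{I_1,I_2,I\}$ and using \ref{excompo} to rule out two transversals meeting the same $E_{ij}$ pattern.
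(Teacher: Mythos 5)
There is a genuine gap, and it sits at the heart of the argument: you apply Proposition \ref{lemW1} to the four-set family $\{I_1,I_2,I_3,I\}$ but compute the weights in that family incorrectly. An index of $A_k=I\cap E_k$ lies in both $I$ and $I_k$, so it has weight $2$ in the four-set family, not weight $1$; consequently the weight-one set of $I_k$ there is $E_k\sm A_k$ (possibly empty), not $E_k$, and the weight-one set of $I$ is $I\cap\{m+1\}$ (possibly empty), not $I\sm(I\cap\{m+1\})$. This breaks both branches. In your first case, \ref{lemW1} cannot yield $\abs{E_k}=1$ (at best $\abs{E_k\sm A_k}=1$, and only when its hypothesis --- every member of the family has a weight-one index --- happens to hold, which fails e.g.\ when some $A_k=E_k$ or when $m+1\notin I$). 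In your second case, when $n=m$ the set $I$ has \emph{no} weight-one index in the four-set family, so \ref{lemW1} is inapplicable and your derivation of $m+1\in I$ collapses; the conclusion is true but needs a different proof.

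What is missing is the device the paper actually uses: since $\abs{A_i}\le\abs{E_i}\le p-2$ forces at least two of the $A_i$ to be non-empty, say $A_1,A_2$, the triple $(I,I_1,I_2)$ is a new $\sim$-cycle, and Proposition \ref{lemW2} applied to it --- comparing $m_{12}=\abs{I\cup I_1\cup I_2}$ with $m$ and $n$ --- is what eliminates $\va=0$, forces $\va=1$ and $\abs{A_1}=\abs{A_2}=1$, and combined with $\abs{E_i}=p-2$ pins down $(p,n)=(3,7)$ or $(4,10)$. Finally, the uniqueness counts at the end, which you defer to ``a finite check'' via \ref{lemW1} and \ref{excompo}, actually rest on Proposition \ref{proptrio} applied to triples $\{I,I',I_k\}$ with $I\cap I'\cap I_k\ne\emptyset$ (all three members would have weight-one indices, a contradiction); \ref{excompo} only excludes two solutions having the \emph{same} intersection with a fixed $I_k$, which is not the configuration you need to rule out.
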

\begin{proof}
By assumption $I$ is of the form $I=A_1\cup A_2\cup A_3\cup(I\cap
\{m+1\})$ and  the condition
$\abs{I}=p$ reads $p=\abs{A_1}+\abs{A_2}+\abs{A_3}+\va$, where
$\va=\abs{I\cap \{m+1\}}$. 
From $\abs{A_i}\le \abs {E_i}\le p-2$  it
follows that at most two $A_i$ are non-empty, say $A_1$ and $A_2$. 
Then, $(I,I_1,I_2)$ {\emph is  a cycle}.
Put $m_{12}=\abs{I\cup I_1\cup I_2}$. By Proposition \ref{lemW2}, 
we must have $n=m_{12}$ or $m_{12}+1$, 
and thus $\abs{m-m_{12}}\le 1$, where 
$m_{12}-m=\va -\abs{E_3\smallsetminus A_3}$.

{\sl Case}  $m_{12}=m+1$,  i.e. $n=m+1$, $\va=1$ and $A_3=E_3$.
By \ref{lemW2}, we have $\abs{E_3}=p-2$, and thus 
$p=\abs{A_1}+\abs{A_2}+(p-2)+1\ge p+1$, a contradiction.

{\sl Case}  $m_{12}=m-1$, i.e. 
 $p=\abs{A_1}+\abs{A_2}+\abs{E_3}-1$. But now $n\ge m$ is equal to 
$m_{12}+1$, and by applying \ref{lemW2} to the cycle
$(I,I_1,I_2)$ we obtain $\abs{A_1}=\abs{A_2}=1$ and thus
$\abs{E_3}=p-1$, a contradiction.

{\sl Case}  $m_{12}=m$, i.e. 
 $p=\abs{A_1}+\abs{A_2}+\abs{E_3}$. 

If $\va=0$, $A_3=E_3$ is
 non-empty, and we can interchange $I_2$ and $I_3$; for the cycle
$(I,I_1,I_3)$ we can discard as above the cases $m_{13}=m\pm 1$, and
thus $m_{13}=m$, with again $\va=0$, and thus $A_2=E_2$, and
similarly $A_1=E_1$. We conclude that
$p=\abs{E_1}+\abs{E_2}+\abs{E_3}=
\abs{E_1}+\abs{E_2}+\abs{E_{12}}+\Delta=p+\Delta$
implies $\Delta=0$ and thus $(p,m,n)=(3,6,7)$. 
Then  the graph of  $\{I,I_1,I_2,I_3\}$ is a star of centre $I$ 
with six indices of  weight two, 
 which contradicts  Proposition \ref{lemW2}.

We are left with the case  $\va=1$, $\abs{A_3}=\abs{E_3}-1$.
Since $n=m+1$, we have, by \ref{lemW2}, $n=3p-2$ and $\abs{E_i}=p-2$,
and thus  $\abs {A_3}=p-3$; 
from $n=m_{12}+1$, we obtain $\abs{A_1}=\abs{A_2}=1$.  

If $(p,n)=(3,7)$, then $A_3=\emptyset$
and $I=E_1\cup E_2\cup\{7\}$. Let $I'$ be another
solution of this type, for instance $I'=E_1\cup E_3\cup\{7\}$. Then
we can apply Proposition \ref{proptrio} to the set
$\{I,I',I_1\}$, since $I\cap I'\cap I_1=E_1$ is not empty. But
$I$,$I'$ and $I_1$ have  indices  of weight one (respectively
those of $E_2$, $E_3$ and  $E_{12}$), a contradiction.

If $p\ge 4$, $A_3$ is not empty and we may interchange (as above) 
$I_3$ with $I_1$ or $I_2$, and obtain $\abs{A_3}=1$, which implies
$p=4$, and $n=10$. Let $I=A_1\cup A_2\cup A_3\cup\{10\}$ and 
$I'=A'_1\cup A'_2\cup A'_3\cup\{10\}$ be two distinct solutions, for
instance $A_3\ne A'_3$. If $A_1=A'_1$, consider as 
above the set  $\{I,I',I_1\}$. It has
indices of weight $3$ (those of $A_1$), and also of weight $1$ in $I$ 
($A_3\smallsetminus A'_3=A_3$), in $I'$ ($A'_3$) and in $I_1$ ($E_{12}\cup
E_{13}$), a contradiction with Proposition \ref{proptrio}, since
$(p,n)\ne (4,8)$. We conclude that for all $i$, $A_i$ is distinct from
$A'_i$, i.e. since $E_i$ has two elements, $A_i$ and $A'_i$ are
complementary in $E_i$. 
\end{proof}
\begin{lemma}\label{lemAij=Eij}
Here we suppose that the $A_{ij}=I\cap E_{ij}$ are  not all empty.
Then

(i) $m+1$ does not belong to $I$;

(ii) there exists a pair $(i,j)$ such that $A_{ij}=E_{ij}$, unique except
for $(p,n)=(3,7)$, where $I_0=E_{12}\cup E_{23}\cup E_{13}$ may belong
to $\cF$;

(iii) we have, for $(i,j,k)$ permutation of $\{1,2,3\}$,
$$A_{ij}=E_{ij}\Longleftrightarrow A_k=\emptyset\,;$$ 

(iv) if $A_{12}=E_{12}$ and if $\emptyset\subsetneq A_{13}\subsetneq E_{13}$,
then $(A_1,A_{23})=(E_1,\emptyset)$.
\end{lemma}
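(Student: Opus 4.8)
The plan is to prove the four parts of Lemma \ref{lemAij=Eij} by a careful bookkeeping of weights, relying systematically on Propositions \ref{lemW1}, \ref{lemW2} and \ref{proptrio} applied to well-chosen sub-triples and quadruples drawn from $\{I_1,I_2,I_3,I\}$. Throughout, recall from part (1) of the section (already established) that $W_3=\emptyset$, that $m\le 3p-3$, that $\abs{E_k}=p-\abs{E_{ij}}-\abs{E_{ki}}\ge 1$ with equality only for $(p,n)=(3,7)$, and that $n\in\{m,m+1\}$. Write $I=A_1\cup A_2\cup A_3\cup A_{12}\cup A_{23}\cup A_{13}\cup(I\cap\{m+1\})$ and set $\va=\abs{I\cap\{m+1\}}$, so $p=\sum_i\abs{A_i}+\sum_{i<j}\abs{A_{ij}}+\va$.

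First I would handle (i) and a preliminary inequality. Suppose some $A_{ij}\ne\emptyset$, say $A_{12}\ne\emptyset$; then in the triple $\{I,I_1,I_2\}$ the indices of $A_{12}$ have weight $3$, so by Proposition \ref{proptrio} (valid since $(p,n)\ne(4,8)$) one of $I,I_1,I_2$ carries no weight-one index. For $I_1$ the weight-one set in this triple is $E_1\cup E_{13}$, which is non-empty ($\abs{E_1}\ge1$), and similarly for $I_2$; hence $I$ has no weight-one index in $\{I,I_1,I_2\}$, i.e. $A_3\cup(I\cap\{m+1\})=\emptyset$ and the $\sim$-graph of $\{I,I_1,I_2\}$ is the path $I_3\sim I_1\sim I_2$ — wait, more directly: $I$ contributes nothing of weight one, so in particular $m+1\notin I$, giving (i); and $A_3=\emptyset$. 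Running the same argument with the (at least one) non-empty $A_{ij}$ shows that whenever $A_{ij}\ne\emptyset$ for a given pair, the "opposite" set $A_k$ satisfies $A_k=\emptyset$; this is one implication of (iii). For the converse implication of (iii) and for (ii), I would count: since $m+1\notin I$, $p=\sum_i\abs{A_i}+\sum_{i<j}\abs{A_{ij}}$ with $\abs{A_i}\le\abs{E_i}$ and $\abs{A_{ij}}\le\abs{E_{ij}}$; comparing with $p=\abs{E_k}+\abs{E_{ki}}+\abs{E_{kj}}$ for the index $k$ with $A_k=\emptyset$ forces $\abs{A_{ki}}+\abs{A_{kj}}\ge\abs{E_{ki}}+\abs{E_{kj}}$ after accounting for the deficits in the other $A$'s, and a short case analysis (using $n\le 3p-2$ hence $\abs{W_2}\ge 3$, and \ref{lemW2} to pin down when $n=m+1$) yields that at least one $A_{ij}$ equals the full $E_{ij}$, giving (ii), and that $A_{ij}=E_{ij}$ forces $A_k=\emptyset$, completing (iii).

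For uniqueness in (ii) I would suppose $A_{12}=E_{12}$ and $A_{13}=E_{13}$ both hold. Then $A_2=A_3=\emptyset$ by (iii), so $p=\abs{A_1}+\abs{E_{12}}+\abs{E_{13}}=\abs{A_1}+(p-\abs{E_1})$, whence $\abs{A_1}=\abs{E_1}$, i.e. $A_1=E_1$ and $I=E_1\cup E_{12}\cup E_{13}=I_1$, contradicting $I\ne I_1$ — unless some degeneracy occurs; tracking the exceptional case where $\abs{E_1}=1$ forces $(p,n)=(3,7)$ and then $I_0=E_{12}\cup E_{23}\cup E_{13}$ is the one extra admissible set (all $A_{ij}=E_{ij}$, all $A_i=\emptyset$, $\abs{I_0}=\abs{W_2}=3=p$). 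I would verify separately, using Proposition \ref{proptrio} on $\{I_0,I_i,I_j\}$, that no further such set arises. Finally for (iv): assume $A_{12}=E_{12}$ (so $A_3=\emptyset$) and $\emptyset\subsetneq A_{13}\subsetneq E_{13}$. In the quadruple $\{I,I_1,I_2,I_3\}$ the weight-one set of $I_3$ is $E_3\cup(E_{13}\setminus A_{13})\cup(E_{23}\setminus A_{23})$, non-empty; the weight-one set of $I$ is $A_1\cup A_{23}$ (since $A_{12}$ is now inside $I\cap I_1\cap I_2$, weight $3$... but $W_3$ must be empty, so this very situation must be excluded by \ref{proptrio} applied to $\{I,I_1,I_2\}$). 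I therefore recast (iv): apply \ref{proptrio} to $\{I,I_1,I_3\}$ where $W_3\supset A_{13}\cap$(something); more carefully, in $\{I,I_1,I_3\}$ the common part is $A_1\cup A_{13}\cup\dots$ and $A_{13}\ne\emptyset$ sits in $I\cap I_1\cap I_3$, so this triple has a weight-$3$ index; by \ref{proptrio} one of $I,I_1,I_3$ has no weight-one index, and checking $I_1,I_3$ have some, we get $I$ has none in this triple, i.e. $A_2\cup A_{23}\cup(I\cap\{m+1\})=\emptyset$, hence $A_{23}=\emptyset$ and then the count $p=\abs{A_1}+\abs{E_{12}}+\abs{A_{13}}$ together with $p=\abs{E_1}+\abs{E_{12}}+\abs{E_{13}}$ forces $\abs{A_1}-\abs{E_1}=\abs{A_{13}}-\abs{E_{13}}<0$, i.e. $\abs{A_1}<\abs{E_1}$ — but we want $A_1=E_1$; I would resolve this by instead applying \ref{proptrio} to $\{I,I_1,I_2\}$ to kill $A_1$... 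The main obstacle, and where I expect to spend the most care, is exactly this kind of sign-chasing in (iii) and (iv): choosing the right triple each time so that the weight-$3$ hypothesis of \ref{proptrio} is met and the "no weight-one index" conclusion lands on $I$ rather than on $I_1,I_2,I_3$, and then converting the resulting emptiness of various $A$'s into the clean cardinality identities. Everything else is routine.
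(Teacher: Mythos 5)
Your toolbox is the right one (Propositions \ref{proptrio} and \ref{lemW2} applied to triples drawn from $\{I,I_1,I_2,I_3\}$, plus cardinality counts), which is also the paper's, but your central step fails. In the triple $\{I,I_1,I_2\}$ with $A_{12}\ne\emptyset$, the set of weight-one indices of $I_1$ is not $E_1\cup E_{13}$ but $(E_1\sm A_1)\cup(E_{13}\sm A_{13})$: an index of $I_1$ that also lies in $I$ has weight at least $2$ in this triple. That set can perfectly well be empty (take $A_1=E_1$ and $A_{13}=E_{13}$), so Proposition \ref{proptrio} does not force the ``no weight-one index'' vertex to be $I$; it may be $I_1$ or $I_2$. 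Indeed the implication you extract, $A_{12}\ne\emptyset\Rightarrow A_3=\emptyset$ (and $m+1\notin I$), is strictly stronger than (iii) and is not established: the configuration $I=E_1\cup A_{12}\cup E_{13}\cup A_3$ with $\emptyset\subsetneq A_{12}\subsetneq E_{12}$ and $\abs{A_3}=\abs{E_{12}}-\abs{A_{12}}>0$ is compatible with all four assertions of the lemma, yet has $A_{12}\ne\emptyset$ and $A_3\ne\emptyset$. The paper therefore splits on whether $A_{12}=E_{12}$: only then is $I_1\not\sim I_2$, so the path must be $I_1\sim I\sim I_2$, $I$ is the valency-two vertex with no weight-one index, and (i) together with $A_3=\emptyset$ follows; when $\emptyset\subsetneq A_{12}\subsetneq E_{12}$ the middle vertex is $I_1$ or $I_2$, and reading off its emptiness gives the dichotomy $(A_1,A_2,A_{13},A_{23})=(E_1,\emptyset,E_{13},\emptyset)$ or $(\emptyset,E_2,\emptyset,E_{23})$ --- which is what actually delivers the existence part of (ii) and, after permuting indices, part (iv).

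The other parts also have concrete holes. The converse direction of (iii), $A_k=\emptyset\Rightarrow A_{ij}=E_{ij}$, is not ``a short case analysis'': the subcase $A_{12}=\emptyset$ (with $A_3=\emptyset$ and, say, $A_{13}=E_{13}$) must be excluded by showing that $\{I,I_1,I_2\}$ would be a $\sim$-cycle on only $m'=m-\abs{E_3}$ indices, contradicting Proposition \ref{lemW2}; nothing in your sketch does this. In the uniqueness argument for (ii) you omit $A_{23}$ from the count: with $A_{12}=E_{12}$ and $A_{13}=E_{13}$ one has $I=A_1\cup E_{12}\cup E_{13}\cup A_{23}$ with $A_{23}\ne\emptyset$ (else $I\subset I_1$ forces $I=I_1$), and it is $A_{23}$, pushed to $E_{23}$ via (iv), that produces the exceptional set $W_2$ and the condition $p=\abs{W_2}=3p-m$, i.e. $(p,n)=(3,7)$ --- not a degeneracy at $\abs{E_1}=1$. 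Finally, in your attempt at (iv) the weight-one set of $I$ relative to $\{I,I_1,I_3\}$ is $A_2\cup(I\cap\{m+1\})$, not $A_2\cup A_{23}\cup(I\cap\{m+1\})$ (the indices of $A_{23}\subset I_3$ have weight $2$ there), and you acknowledge that you do not reach $(A_1,A_{23})=(E_1,\emptyset)$; it drops out of the dichotomy above once the middle vertex of the path $\{I,I_1,I_3\}$ is correctly identified as $I_1$ (the alternative, $I_3$ as middle, would force $A_3=E_3\ne\emptyset$, contradicting $A_3=\emptyset$).
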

\begin{proof} We suppose for instance $A_{12}\ne \emptyset$. 
 By Proposition \ref{proptrio}, we know that
 {\emph  the graph of $\{I,I_1,I_2\}$ is a path} 
(since $I\cap I_1\cap
  I_2=A_{12}\ne\emptyset$), and that
  its vertex of valency $2$ has no index of weight $1$.
The sets of indices of weight one in $I$, $I_1$ and $I_2$ are respectively
$(I\cap\{m+1\}) \cup A_3$, $(E_1\sm A_1)\cup
(E_{13}\sm A_{13} $) and $(E_2\sm A_2)\cup
(E_{23}\sm A_{23} $). The sets of indices of weight two
in $I\cap I_1$, $I\cap I_2$ and $I_1\cap I_2$ are respectively
$A_1\cup A_{13}$, $A_2\cup A_{23}$, and $E_{12}\sm
A_{12}$.

First suppose $A_{12}\ne E_{12}$. Then in the path above
 $I_1$ and $ I_2$ are adjacent, and one of them
 has  valency two and thus contains no index 
of weight one, the other one is not adjacent to $I$. Thus
$$\emptyset\subsetneq A_{12}\subsetneq E_{12}
\Longrightarrow (A_1,A_2,A_{13},A_{23})=
\begin{aligned}& (E_1,\emptyset,E_{13},\emptyset)\text{\quad or\quad}\\
                &(\emptyset,E_2,\emptyset,E_{23})
\end{aligned}\,,$$
which  establishes the ``existence part'' of (ii), and (up to exchange
of $2$ and $3$) the item (iv). 

Suppose for instance $A_{12}=E_{12}\,.$
Then, we have a path $I_1\sim I\sim I_2$ (since $I_1$ and $I_2$ are no
more adjacent), $I$ has no index of weight one: $m+1$ does not belong to $I$ as
stated in (i), and $A_3$ is empty, as stated in the part $\Rightarrow$
of(iii). 

Conversely, suppose $A_3=\emptyset$. If $\emptyset\subsetneq
 A_{12}\subsetneq E_{12}$, we obtain $I=E_1\cup E_{13}\cup
 A_{12}\subset I_1$, thus $I=I_1$, a contradiction. If
 $A_{12}=\emptyset$, we may suppose (by (ii))
for instance $A_{13}=E_{13}$, and by the part $\Rightarrow$ of (iii),
 $A_2=\emptyset$: $I=A_1\cup A_{23}\cup E_{13}$, with
$A_1$ and  $A_{23}\ne\emptyset$ (otherwise, $I$ should be a strict
subset or $I_3$ or $I_1$). Then,  the set $\{I,I_1,I_2\}$ is a cycle,
with $m'=\abs{I\cup I_1\cup I_2}=m-\abs{E_3}\le m-1$. By Proposition
\ref{lemW2}, we must have $m'=m-1$ and $n=m'+1$. By \ref{lemW2} again,
the last equality implies $\abs{I\cap I_1}=1$, i.e. $\abs{A_1\cup
  E_{13}}=1$, a contradiction.
Thus, $A_{12}=E_{12}$, as announced.

It remains to discuss the ``unicity'' in (ii).
Suppose $A_{12}=E_{12}$ and $A_{13}=E_{13}$ for instance. We then have 
$A_2=A_3=\emptyset$, and $I=A_1\cup E_{12}\cup E_{13}\cup A_{23}$,
with $A_{23}\ne\emptyset$ (since $I\not\subset I_1$). By (iv), we
conclude $A_{23}=E_{23}$, since $A_{12}$ and $A_{13}$ are non empty.
Thus all $A_i$ are empty and $I$ coincides
with $E_{12}\cup E_{13}\cup E_{23}$ i.e. the set $W_2$ of indices of
weight $2$ in the cycle $(I_1,I_2,I_3)$. Equaling the cardinalities
we obtain $p=\abs{W_2}=3p-m$, thus $m=2p$ and $(p,n)=(3,7)$. 
\end{proof}

\medskip
Apart from  the two ``exotic'' solutions for
$(p,n)=(3,7)$ or $(4,10)$ exhibited in \ref{lemnoAij} and
\ref{lemAij=Eij}, we just have proved that  the set $\cF\sm \{I_1,I_2,I_3\}$ 
is a disjoint union of three components $\cF_1$, $\cF_2$ and $\cF_3$,
where $\cF_i=\{I\in\cF, I\ne I_1,I_2,I_3 \mid\  I\cap
E_i=\emptyset\}$. We now evaluate their cardinality.

\begin{lemma} \label{lemE12full}
 The set $\cF_3=\{I\in\cF, I\ne I_1,I_2\mid (A_3,A_{12})=(\emptyset,E_{12})\}$
contains at most $p-\abs{E_{12}}-1$ elements.
\end{lemma}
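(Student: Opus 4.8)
The plan is to parametrize elements of $\cF_3$ by the part of the index set lying inside $I_1$ (or, equivalently, inside $I_2$), and then show this parametrizing family is totally ordered by inclusion, so that its cardinality is controlled by $|E_{12}|$. Concretely, by Lemma \ref{lemAij=Eij}(iii) every $I\in\cF_3$ has the form $I=A_1\cup A_{13}\cup E_{12}\cup A_2\cup A_{23}$ with $A_3=\emptyset$; writing $\abs{I}=p$ gives $\abs{A_1}+\abs{A_{13}}+\abs{A_2}+\abs{A_{23}}=p-\abs{E_{12}}$, so the ``free part'' of each such $I$ has a fixed total size $q:=p-\abs{E_{12}}$. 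I would record that each $I\in\cF_3$ restricts to a subset $C_I:=I\cap(E_1\cup E_{13})=A_1\cup A_{13}$ of the fixed set $E_1\cup E_{13}$, and that by \ref{excompo} the map $I\mapsto C_I$ is injective on $\cF_3$ (two distinct minimal vectors of type $p$ cannot share the same intersection with $I_1$, unless that intersection is empty or all of $I_1$ — and here $C_I$ determines $I\cap I_1=A_1\cup E_{12}\cup A_{13}$, which is neither $\emptyset$ nor $I_1$ since $A_2\cup A_{23}\neq\emptyset$).

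First I would take two elements $I,I'\in\cF_3$ and apply Corollary \ref{nocycle} (with $I_1$ in the role of $I_0$): since $I,I'$ both meet $I_1$, their traces $A:=I\cap I_1$ and $A':=I'\cap I_1$ either satisfy an inclusion or not. The key point is to rule out the ``no inclusion'' case together with a non-cycle, which by \ref{nocycle}(iii) would force $I_1=A\cup A'$ whenever $A\cap A'\neq\emptyset$; but $A\cap A'\supseteq E_{12}\neq\emptyset$, so $I_1=A\cup A'$. Since also the outside-parts $X=I_1\sm A$ and $X'=I_1\sm A'$ would then be disjoint, and each is contained in $E_1$ (as $A\supseteq E_{12}\cup$ its $E_{13}$-part forces $X\subseteq E_1$), one gets $\abs{X}+\abs{X'}\le\abs{E_1}$. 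The case where $(I,I',I_1)$ is a cycle is, by \ref{nocycle}(i), exactly $A\cap A'=\emptyset$ with $X\cap X'\neq\emptyset$, which is incompatible with $E_{12}\subseteq A\cap A'$; so no cycle occurs. Hence either $A\subseteq A'$ or $A'\subseteq A$, i.e. the family $\{A_I:=I\cap I_1 \mid I\in\cF_3\}$ is totally ordered by inclusion.

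A totally ordered chain of distinct subsets, each containing $E_{12}$ and properly contained in $I_1$, has length at most $\abs{I_1}-\abs{E_{12}}=p-\abs{E_{12}}$; and since the map $I\mapsto A_I$ is injective on $\cF_3$, this gives $\abs{\cF_3}\le p-\abs{E_{12}}$. To sharpen this to $p-\abs{E_{12}}-1$, I would observe that the maximal element of the chain is a proper subset of $I_1$ that strictly contains $E_{12}$ only by as much room as $E_1$ allows on the way down, so the chain $E_{12}\subsetneq\cdots\subsetneq A_{I}\subsetneq I_1$ running through values of $A_I$ cannot realize all $p-\abs{E_{12}}$ intermediate cardinalities while staying consistent with the constraint that the complementary ``free'' part $A_2\cup A_{23}$ (inside $E_2\cup E_{23}$) also be nonempty and fit; more directly, the smallest possible $A_I$ already has size $\abs{E_{12}}+\va$ with $\va\ge1$ (because $A_1\cup A_{13}=A_I\sm E_{12}$ cannot be empty: if it were, then $A_2\cup A_{23}$ would have size $q=p-\abs{E_{12}}\ge1$ and $I\subseteq I_2$, forcing $I=I_2$). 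So the chain of distinct $A_I$-values sits strictly between $E_{12}$ and $I_1$ on both ends, giving at most $(p-1)-\abs{E_{12}}$ steps and hence $\abs{\cF_3}\le p-\abs{E_{12}}-1$.

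The main obstacle I anticipate is the endpoint bookkeeping in the last paragraph: making airtight that $A_I$ can neither equal $E_{12}$ nor equal $I_1$ for $I\in\cF_3$, and that the injective image is a genuine chain missing both endpoints, so that the count is $p-\abs{E_{12}}-1$ and not merely $p-\abs{E_{12}}$. Ruling out $A_I=I_1$ is immediate ($I\neq I_1$ and $\abs{I}=\abs{I_1}=p$ would force $I\subseteq I_1\Rightarrow I=I_1$); ruling out $A_I=E_{12}$ (equivalently $A_1=A_{13}=\emptyset$) needs the observation that then $I=E_{12}\cup A_2\cup A_{23}\subseteq I_2$ hence $I=I_2$, again excluded. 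Once both endpoints are excluded, the chain of distinct subsets of $I_1$ strictly between $E_{12}$ and $I_1$ has at most $p-\abs{E_{12}}-1$ members, and injectivity of $I\mapsto A_I$ finishes the proof.
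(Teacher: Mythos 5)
Your overall strategy---parametrize $\cF_3$ by the traces $I\cap I_1$, show these form a chain lying strictly between $E_{12}$ and $I_1$, and count the chain---is essentially the paper's, and your endpoint bookkeeping (excluding $A_I=E_{12}$ and $A_I=I_1$) and the injectivity via \ref{excompo} are fine. But there is a genuine gap at the heart of the argument: you never actually rule out the case where $A=I\cap I_1$ and $A'=I'\cap I_1$ satisfy no inclusion. You correctly exclude the cycle configuration (impossible since $E_{12}\subseteq A\cap A'$), and you correctly note that \ref{nocycle}(iii) then yields $I_1=A\cup A'$ together with disjointness of the outside parts; but at that point you record the consequence ``$\abs{X}+\abs{X'}\le\abs{E_1}$'' and then jump to ``hence either $A\subseteq A'$ or $A'\subseteq A$'' without any contradiction having been derived. (The intermediate claim is itself off: in \ref{nocycle} the set $X$ is $I\sm I_1=A_2\cup A_{23}$, not $I_1\sm A$, and neither of these is contained in $E_1$---indeed $I_1\sm A$ meets $E_{13}$ because $A_{13}\subsetneq E_{13}$ for every element of $\cF_3$.)

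The configuration $I_1=A\cup A'$ with disjoint outside parts is not self-evidently absurd, and dismantling it is where the real work of the lemma lies. The paper's route: $I_1=A\cup A'$ gives $E_{13}=A_{13}\cup A'_{13}$; since each of $A_{13},A'_{13}$ is a \emph{proper} subset of $E_{13}$ (uniqueness in \ref{lemAij=Eij}(ii)), both are nonempty, so \ref{lemAij=Eij}(iv) forces $A_1=A'_1=E_1$. On the other hand the traces of $I$ and $I'$ on $I_2$ also satisfy no inclusion (their $E_2$-parts are nonempty and disjoint), so the same argument with $I_2$ in place of $I_1$ forces $A_1\cap A'_1=\emptyset$, contradicting $A_1=A'_1=E_1\ne\emptyset$. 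Some argument of this kind is indispensable; without it the total-ordering claim, and hence your count of $p-\abs{E_{12}}-1$, is unproven. Once the ordering is supplied, your counting does give the stated bound (the paper counts via $\abs{E_1}+\abs{E_{13}}-1$, which is the same number).
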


\begin{proof}
Let 
$$I=A_1\cup A_2\cup E_{12}\cup  A_{13}\cup  A_{23}\,,$$
 with $A_1$, $A_2$, $E_{13}\sm A_{13}$ and 
$E_{23}\sm  A_{23}$ non-empty, be an element of $\cF_3$.
Its intersection 
$B_1=A_1\cup E_{12}\cup A_{13}$ with $I_1$ contains $E_{12}$ thus
is not empty. 

We now prove that {\emph when $I$ runs through $\cF_3$ the
sequence $B_1=I\cap I_1$ is totally ordered}. Let 
$I'=A'_1\cup A'_2\cup E_{12}\cup  A'_{13}\cup  A'_{23}$ be another 
element of $\cF_3$. Put $B'_1=I'\cap I_1$, $X=I_1\sm B_1$ and 
$X'=I_1\sm B'_1$. We suppose that $B_1$ and $B'_1$ satisfy no
inclusion. Since they both contain $E_{12}$, Corollary \ref{nocycle}
shows that $I_1=B_1\cup B'_1$ and
that  $X\cap X'=\emptyset$. In particular we obtain
 $E_{13}=A_{13}\cup A'_{13}$ and  $A_2\cap A'_2=\emptyset$. Since 
$A_{13}$ and $A'_{13}$ are distinct from $E_{13}$, the first relation
proves that there are not empty, thus by Lemma \ref{lemAij=Eij}, that
$A_1=A'_1=E_1$. Now we observe that the intersections $A_2\cup
E_{12}\cup A_{23}$ and $A'_2\cup E_{12}\cup A'_{23}$ of $I$ and $I'$
with $I_2$,   satisfy no inclusion, since $A_2$ and $A'_2$ are
non-empty and disjoint. By exchanging $I_1$ and $I_2$ we deduce that 
$A_1$ and $A'_1$, which both coincide with $E_1$, must be disjoint, a
contradiction. 
We conclude (by Corollary \ref{nocycle}) 
that $B_1$ and $B'_1$ satisfy a strict inclusion, for instance
$B'_1\subsetneqq B_1$, i.e.  
$$A'_1\subset A_1\text{\quad and \quad} A'_{13}\subset A_{13}\,,$$
equality holding in at most one inclusion.
In particular, suppose $A'_1=A_1$, then $A_{13}\supsetneqq A'_{13}$ is
not empty, and by the lemma above, $A_1=E_1$. The totally ordered
sequence $(A_1)_{I\in\cF_3}$ contains a strictly increasing sequence 
of non-empty, strict subpaces of $E_1$ (with at most $\abs{E_1}-1$
terms) and of at most  $\abs{E_{13}}$ terms $A_1$ equal to $E_1$
  (associated with a   strictly increasing sequence of strict subspaces
  $A_{13}$ of $E_{13}$).
We have then $\abs{\cF_3 }\le \abs{E_1}+\abs{E_{13}}-1=
p-\abs{E_{12}}-1$ as announced.
\end{proof}

\medskip

Coming back to the proof of Theorem \ref{theocycle},
we conclude that the family $\cF$ contains apart from $I_1$, $I_2$,
$I_3$ at most $3p-3-\abs {W_2}=m-3$ non-exotic terms.
The proof  is complete for $n\le 3p-3$, i.e. $n=m$. 

{\sl Case $n=m+1$, i.e. $n=3p-2$.}
Since $n=m+1$, the $E_{ij}$ are singletons, and their strict subspaces
are empty. The  elements of $\cF_3$ for instance are of the
form $I=A_1\cup A_2\cup E_{12}$, with $A_1$ and $A_2$
non-empty. Actually, we have seen in the above proof that the sequence 
$(A_1)_{I\in\cF_3}$ contains at most $\abs{E_{13}}=1$ term equal to
  $E_1$, so is a strictly totally ordered 
sequence of non-empty subspaces of
  $E_1$, with at most $\abs{E_1}=p-2$ terms. Of course similar remarks 
are valid   for the subspaces $A_2$ of $E_2$.
We  now prove that {\emph if
two families $\cF_i$ are non-empty, one of them at least is a singleton}.
Let $I=A_1\cup A_2\cup E_{12}$ and $I'=A'_1\cup A'_3\cup E_{13}$ be
elements of $\cF_3$ and $\cF_2$ respectively.
First, $A_1$ and $A'_1$ must satisfy an inclusion.
Otherwise, $(I,I_1,I',I_3,I_2)$ should be a cycle for the relation
$\sim$. Indeed, the sets of indices of weight $2$ in $I\cap I_1$,
$I'\cap I_1$, $I'\cap I_3$, $I_3\cap I_2$ and $I\cap I_2$
(respectively $A_1\sm A'_1$, $A'_1\sm A_1$, $A'_3\sm A_3=A'_3$,
$E_{23}\sm (A_{23}\cup A'_{23})=E_{23}$ and $A_2\sm A'_2=A_2$),
should be all non-empty. Now, since
$\abs{I\cup I'\bigcup_j I_j}=\abs{\bigcup_j I_j}=m=n-1$, by
 \ref{lemW2}, the subsets above should all be singletons, 
in particular $A_2$ and $A'_3$, implying  that $A_1$ and $A'_1$ should
contains $p-2$ elements, i.e. both coincide with $E_1$, a
contradiction. 
Therefore we may suppose $A'_1\subset A_1$. The
intersections $B_1=A_1\cup E_{12}$ and  $B'_1=A'_1\cup E_{13}$ of $I$
and $I'$ with $I_1$ satisfy $B_1\cap B'_1=A'_1\ne\emptyset$, and thus,
by Corollary \ref{nocycle}, $E_1=A_1\cup A'_1$, i.e. $A_1=E_1$,
which specifies  uniquely $I=E_1\cup \{a_2\}\cup E_{12}$  
in $\cF_3$: $\abs{\cF_3}=1$ as announced.
 
If $\cF_1$ is empty, we have $\abs{\bigcup \cF_i}\le 1+(p-2)$,
and $\cF$ contains at most $p+2$ non-exotic elements.

Otherwise, let $I"=A"_2\cup A"_3\cup E_{23}$
be an element of $\cF_1$. By exchanging $I"$ with $I'$ or $I$,
we know that $A"_2$ and $A_2$  on the one hand, $A"_3$ and $A'_3$
on the other hand, must satisfy an inclusion, and that
the larger of the subsets coincides with $E_2$ or $E_3$ respectively.
We thus have
$$\abs{A_2}=1\Rightarrow A"_2=E_2\Rightarrow \abs{A"_3}=1
\Rightarrow A_3=E_3\,.$$: 
$\bigcup \cF_i$ contains at most $3$ elements, of the form
$E_1\cup \{a_2\}\cup E_{12}$, $\{a'_1\}\cup E_3\cup E_{13}$ and
$E_2\cup \{a"_3\}\cup E_{23}$, with uniquely determined elements
$a_i, a'_i$ or $a"_i$ in $E_i$.
 
We conclude that $\abs{\bigcup \cF_i}\le \max(3,p-1)$, and thus
that in the case $n=3p-2$, $\cF$ contains $p+2$ (resp. $6$) non-exotic
elements if $p\ge 4$ (resp. $p=3$). 

To complete the proof of the theorem, it remains to discuss the
occurrence of the ``exotic'' elements when 
$(p,n)=(3,7)$ or $(4,10)$. Actually, in both cases, an exotic element,
say $I$, described by Lemma \ref{lemnoAij} is inconsistent with an
element, say $J$, of $\bigcup \cF_j$: there exists $k=1,2,3$ such that
the set $\{I,J,I_k\}$ contradicts  Proposition \ref{proptrio}.
So $\cF$ contains at most $6+1=n$ (resp. $6=p+2$) elements when 
$p=3$ (resp. $p=4$). This completes the proof of Theorem \ref{theocycle}.

\section{Kissing number of a lattice of index $2$ , maximal length }
\medskip
The goal of this section is to prove Theorem \ref{noperf}
by giving an explicit upper bound for the 
number $s$ of pairs $\pm x$ of minimal vectors of the lattice, bound
depending on the dimension $n$ modulo $6$.

\begin{theorem} \label{theokissing}
Let $L$ be a lattice of dimension $n\ge 6$, index $2$ with length $\ell=n$.
Bounds for the half kissing number $s$ of $L$ are given in the
following table.
$$\vbox{\offinterlineskip\halign{#&#&#&#&#&#&#&#&#&#&#&#&#&#&#\cr
&\cc{n \mod 6}&\tvi&\cc{\text{upper bound for }s}\cr
\noalign{\hrule}
\noalign{\hrule}
&\cc{0} &\tvj&\cc{ 19\ \text{ if }n=6\quad ;  (2n^2+24n-45)/9\ \text{ if }n\ge 12}\cr
\noalign{\hrule}
&\cc{1} &\tvj&\cc{24\ \text{ if }n=7\quad ; (2n^2+20n-13)/9 \ \text{ if }n\ge 13}\cr
\noalign{\hrule}
&\cc{2} &\tvj&\cc{ 32\ \text{ if }n=8\quad ;  (2n^2+22n-25)/9\ \text{ if }n\ge 14}\cr
\noalign{\hrule}
&\cc{3} &\tvj&\cc{ 37\ \text{ if }n=9\quad  ; (2n^2+24n-54)/9\ \text{ if }n\ge 15}\cr
\noalign{\hrule}
&\cc{4\ }&\tvj&\cc{ 44\ \text{ if }n=10\quad ; (2n^2+20n-4)/9\  \text{ if }n\ge 16}\cr
\noalign{\hrule}
&\cc{5} &\tvj&\cc{ (2n^2+22 n -34)/9 \quad ( n\ge 11)}\cr
}}
$$
\end{theorem}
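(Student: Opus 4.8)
The plan is to assemble the bound on $s=n+t_0+t_1+\dots+t_{\lfloor n/2\rfloor}$ from the per-type estimates already available, and to optimize the way the total type-count is distributed among the possible types. First I would record the pieces: $t_0=1$ (only $e$ itself); $t_1\le 4$ by Proposition \ref{proptype1}; $t_1+t_2\le 9$ by Corollary \ref{cort12}; and for each $p\ge 3$ the bound on $t_p$ coming from Theorems \ref{theoI1I2} and \ref{theocycle}, together with Proposition \ref{4,8} in the exceptional case $(p,n)=(4,8)$. The key observation is that a minimal vector of type $p$ lives among the $x_I$ with $|I|=p$, so $t_p$ is the number of index sets of a fixed cardinality in a family $\cF$; thus $t_p\le p+6$ once $n\ge 2p+2$ (and the smaller bounds $p+5$, $p+1$, etc., in the boundary ranges $n=2p+1$, $n=2p$), while when $\cF$ contains a $3$-cycle one instead has $t_p\le n$, sharpened to $t_p\le p+2$ at the top end $n=3p-2$. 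Note that for a given $n$ only the types $p$ with $3\le p\le\lfloor n/2\rfloor$ contribute via these theorems, and the constraint $2p\le n$ versus $n\le 3p-2$ (i.e. $p\ge (n+2)/3$) splits the range of $p$ into a ``cycle-free zone'' (small $p$, bound $\sim p+6$) and a ``cycle-possible zone'' (large $p$, bound $\sim n$ or $p+2$).

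Second, I would carry out the summation. Writing $T=\sum_{p\ge 3}t_p$, the dominant contribution is $\sum_p (p+6)$ over $p$ from $3$ to roughly $\lfloor n/2\rfloor$, which is quadratic in $n$ with leading coefficient $1/8$ — but that is already below the perfection bound $n(n+1)/2$, so the real work is in squeezing out the sharper constant $2n^2/9$ claimed in the abstract. For that I expect one has to be careful in the window where cycles are possible: for $p$ with $(n+2)/3\le p\le \lfloor n/2\rfloor$ one uses $\min(p+6,\,n)$, and more importantly one cannot simultaneously have many types attaining their maximum, because the same congruence/freeness arguments of Section \ref{secintro}'s machinery (Proposition \ref{lemW2} especially) link vectors of different types. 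I would isolate, for each residue class $n\bmod 6$, exactly which $p$ fall in which zone, plug in $t_1+t_2\le 9$ for the bottom two types, $t_p\le p+6$ (or the boundary value) in the middle, and $t_p\le p+2$ or $t_p\le n$ at the top, and then simplify; the residue class enters only through the endpoints $\lfloor n/2\rfloor$ and $\lceil (n+2)/3\rceil$ and through which boundary cases $n=2p$, $n=2p+1$, $n=2p+2$, $n=3p-2$, $n=3p-3$ actually occur. The six rows of the table are then six essentially identical computations with different floor/ceiling values, and the small-$n$ entries ($n=6,\dots,10$) are obtained by the same bookkeeping done by hand (here the $t_1+t_2\le 9$ refinement and the $(4,8)$ exception matter most).

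Third, I would handle the low-dimensional rows ($n=6,7,8,9,10$) separately and explicitly, since there the asymptotic formulas either do not apply or are not sharp: for these one lists the admissible types $p\in\{3,4,5\}$, invokes Corollary \ref{cort12} for $t_1+t_2$, Theorem \ref{theoI1I2}/\ref{theocycle} for $t_3,t_4,t_5$ (with $(p,n)=(3,7)$, $(4,10)$, and the discarded $(4,8)$ needing their special bounds $\le 7$, $\le p+6$, $\le 6$ respectively), and adds $n+1$ for the $e_i$ and $e$. Finally I would check in each row that the resulting bound is strictly less than $n(n+1)/2$ — this is the payoff that proves Theorem \ref{noperf}, and it is immediate once the table is established because $2n^2/9<n^2/2$ with room to spare for $n\ge 6$.

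The main obstacle I anticipate is not any single deep step but the combinatorial optimization in the middle zone: making sure the sum $\sum_p \min(p+6, n, \text{boundary values})$ is bounded by the stated quadratic with the correct linear and constant terms for \emph{every} residue class, rather than just asymptotically. In particular one must be vigilant about double-counting or, conversely, about whether two consecutive types can both be near-extremal — if the arguments of Sections \ref{secintro}'s lemmas force a trade-off (e.g. a large $t_p$ via a $3$-cycle constraining $t_{p'}$ for neighbouring $p'$ through the dimension bound $2p+1\le n\le 3p-2$), then exploiting that trade-off is exactly what gets the constant down to $2/9$; if not, one simply sums the individual bounds and the constant may come out a touch weaker than $2n^2/9$ for small residues, which is why the table lists case-by-case formulas rather than a single clean bound. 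Either way the verification is routine arithmetic once the zones and boundary cases are correctly enumerated.
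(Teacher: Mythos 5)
Your overall skeleton (sum the per-type bounds $t_p$, split the range of $p$ at $p\approx (n+2)/3$ according to whether a $3$-cycle is possible, reduce to a residue-class computation mod $6$, and treat $n\le 10$ by hand) is exactly the paper's, but there is a concrete error in how you combine the two theorems for a type $p$ in the ``cycle-possible zone''. You propose to use $\min(p+6,\,n)$ there. That is not justified: Theorem \ref{theoI1I2} bounds $t_p$ by roughly $p+6$ only under the hypothesis that the family of index sets contains no $3$-cycle, and Theorem \ref{theocycle} bounds it by $n$ only under the hypothesis that it does contain one; since a given family satisfies exactly one of the two hypotheses and you do not know which, the only available combined bound is $\max(T_1,T_2)$, which for $n\ge 11$ and $p$ in the window $\lceil (n+3)/3\rceil\le p\le\lfloor (n-1)/2\rfloor$ equals $T_2=n$, i.e.\ the \emph{worse} of the two. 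This also corrects your reading of where the constant $2/9$ comes from: it is not a sharpening below the naive $\sum (p+6)\sim n^2/8$ obtained by exploiting trade-offs between types --- the paper uses no cross-type argument in the general case --- but the opposite: $2n^2/9>n^2/8$, and the extra $7n^2/72$ is exactly the penalty $\Sigma_2=\sum_{p\in\mathcal P}(n-6-p)$ incurred by having to fall back on the bound $n$ in the cycle window. Carried out with $\min$ as written, your computation would produce numbers strictly smaller than the table's, and those numbers would not be proven.

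A second, smaller gap concerns $n=6$: the straightforward sum gives $7+9+4=20$, not the stated $19$. The paper needs the extra observation that $t_3=4$ forces (via Propositions \ref{lemW1} and \ref{lemW2}) the type-$2$ graph to have no vertex of valency $3$, whence $t_1+t_2\le 8$ by Corollary \ref{cort12}; this is the one place where near-extremal values of two different types are genuinely played against each other. Your generic remark that the small cases are ``bookkeeping'' does not surface this refinement. The remaining rows ($n=7,8,9,10$, with the $(4,8)$ exception $t_4\le 6$, and the six closed formulas for $n\ge 11$) are indeed the arithmetic you describe, once $\max$ replaces $\min$.
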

\begin{proof}
 To compute the number $s=\frac{\abs{S(L)}}2$ 
of pairs of minimal vectors of the lattice
 $L$, we use the following description 
$$S(L)=S(L_0)\cup \S_0\cup S_1\cup S_2\cup\dots \cup S_{\lfloor \frac
  n2\rfloor}\,,$$
where $S(L_0)$ stands for the set of minimal vectors of the lattice
$L_0=\langle e_1,e_2,\dots, e_n\rangle$,  and 
$S_p$ for the set of pairs $\pm x$ where $x$ is a minimal vectors of type $p$.
Let $t_p=\frac{\abs{S_p}}2$ denote the number of such pairs. 
Since $S(L_0)=\{\pm e_1,\pm e_2,\dots \pm e_n\}$ and $S_0=\{\pm e\}$,
we obtain
$$s=n+1+\sum_{p=1}^{\lfloor\frac n2\rfloor} t_p\,$$
where we shall use of the estimations of the $t_p$ given in the sections
above, and the sharper one obtained for $t_1+t_2$ in \ref{cort12}:
$$s\le n+10+\sum_{p=3}^{\lfloor\frac n2\rfloor} t_p\,.$$
For $p\ge 3$, let $T_1$ and $T_2$  denote the bounds for $t_p$ given
by Theorems \ref{theoI1I2} and \ref{theocycle}.
If $S_p$  may contain a cycle of length $3$, i.e.,by \ref{theocycle}, 
if $\frac{n+2}3\le p\le\frac{n-1}2$, 
 we obtain for
$t_p$ the estimation $t_p\le \max(T_1,T_2)$. Otherwise, $t_p\le T_1$.
 Now suppose $p=\frac{n+2}3$. If $(p,n)=(3,7)$, $T_1$  and $T_2$
coincide with $n=7$ ; if $p\ge 4$, $T_2=p+2<p+5\le T_1$, and again
$\max (T_1,T_2)=T_1$. The bound $T_2$
is to take into account for  the integers $p$ such that
$\frac{n+2}3< p\le\frac{n-1}2$, i.e. for the elements of
$$\cP=\{p_1,p_1+1,\dots,p_k\},\ \text{ with }\
p_1=\lceil\frac{n+3}3\rceil,\ p_k=\lfloor\frac{n-1}2\rfloor\,.$$
Actually, $\cP$ is empty for $n=6$, $n=8$ and $n=10$, it contains 
the type $p=3$ for $n=7$ only (and then $T_1=T_2=7$), the type $p=4$ for
$n=9$ (and then $T_1=T_2=9$).
Thus, for $p\in \cP$ and $n\le 10$, we have $t_p\le T_1=\max(T_1,T_2)$,
while for $p\in\cP$ and $n\ge 11$, we have $t_p\le T_2=\max(T_1,T_2)$
(since then $T_2=n\ge \frac{n-1}2+6\ge p+6\ge T_1$).

For $n\le 10$, we  sum up the bounds  given by \ref{theoI1I2}
\ref{4,8} and \ref{cort12}:

$n=6$: $s\le 7+9+4=20$ (bound to be improved below);

$n=7$: $s\le 8+9+7=24$;

$n=8$: $s\le 9+9+8+6=32$;

$n=9$: $s\le 10+9+8+9=36$;

$n=10$: $s\le 11+9+8+10+6=44$.

From now on, we suppose $n\ge 11$.

Let $\Sigma_1$ denote the sum of the bounds $T_1$ given by
\ref{theoI1I2} for $t_p,\ 3\le p\le
\lfloor\frac n2\rfloor$:
$\Sigma_1=8+\sum_{p=4}^{\lfloor\frac n2\rfloor}
(p+6)-\va=\lfloor\frac n2\rfloor(\frac{\lfloor\frac n2\rfloor
  +13}2)-16-\va\,,$
with $\va=1$ if $n$ is odd, $\va=5$ if $n$ is even:
$$\Sigma_1=\frac{n^2+24n-161}8 \text{ if $n$ is odd},\quad
\Sigma_1=\frac{n^2+26n-168}8   \text{ if $n$ is even }
$$
For $p\in\cP$ and $n\ge 11$ (thus $p\ge 5$), we must replace
the bound $T_1$ by the bound $T_2=n$. Let $\Sigma_2$ denote the
sum of these correcting terms $T_2-T_1=n-(p+6)$ (resp. $n-(p+5)$
for $p\ne \frac{n-1}2$ (resp. $=\frac{n-1}2$). We have
$$\begin{aligned}
\Sigma_2=&\sum_{p\in\cP} (n-6-p)+\va, \text{ with }\va=1
\text{ if $n$ is odd}\\
=&k(n-p_1-6)-1-2-\cdots-(k-1)+\va, \text{ with }k=(p_k-p_1+1)\\
=&(p_k-p_1+1)(n-\frac{p_1+p_k}2-6)+\va\,.
\end{aligned}
$$

One  easily checks the following expressions of this correcting term,
depending  on $n$ modulo $6$. 

\hbox{}\hskip3.3cm
\vbox{
\begin{tabular}{c|c}
$n$&$72 \Sigma_2$\\
\hline
$\equiv 0$&$7n^2-114n+432$\\

$\equiv 1$&$7n^2-128n+625$\\

$\equiv 2$&$7n^2-130n+592$\\

$\equiv 3$&$7n^2-96n+297$\\

$\equiv 4$&$7n^2-146n+76$\\

$\equiv5$&$7n^2-112n+457$

\end{tabular}}

We now use the inequality
$s\le n+1+9+\Sigma_1 +\Sigma_2$
to obtain the table of Theorem \ref{theokissing} for $n\ge 7$.
Of course the bounds for $s=n+\sum t_p$ obtained by bounding separately the
$t_p$ are not optimal. This is the case when $(p,n)=(3,6)$: the maximal value 
$4$ of $t_3$  is inconsistent with the
maximal value $9$ of $t_1+t_2$,  which leads to the bound 
$s\le 6+1+9+3=19$ instead of $20$.

[{\small Suppose $t_3=4$. The four sets
$I$ of type $3$ are, up to permutation, 
$I_1=\{1,2,3\}$, $I_2=\{1,3,4\}$, $I_3=\{1,4,5\}$,  $I_4=\{1,5,2\}$;
then, no index $i\in\{1,2,\dots,6\}$ has valency $3$ for the relation
``$i\equiv j$ if $e-e_i-e_j$ is minimal'', as we now prove. There are
$3$ cases to consider according as the weight of $i$ (with respect to 
the $I_j$) is equal to  $4$, $2$ or $0$.
First, $i=1$ has at most valency $2$: Proposition \ref{lemW1}
prevents  $1\equiv 6$, and proves that $1\equiv 2$ is inconsistent with
$1\equiv 3$ or $1\equiv 5$. For  $i=2$, Proposition \ref{lemW1} proves
that $2\equiv 3$ is inconsistent with $2\equiv 6$, and by Proposition
\ref{lemW2} we see that $2\equiv 5$ is not possible, and that $2\equiv 3$ is
 inconsistent with $2\equiv 1$ (for instance the sets $\{2,1\}$,
 $\{2,3\}$, 
and  $I_2=\{1,3,4\}$ form a cycle of length $3$ and $m=4$ indices,
impossible for $n=6$). The same argument implies that $6\equiv 2$ is
inconsistent with $6\equiv 3$ (and $6\equiv 5$).
Thus, by Corollary \ref{cort12}, we have $t_1+t_2\le 8$. }]
\end{proof}
The difference between  $\frac{n(n+1)}2$ and the bound  for $s$
 given in \ref{theokissing}  takes the values
$$2,4,4,9,11,16,19,26,30,36,44,\dots$$
for  $n=6,7,8,9,10,11,12,13,14,15,16,\dots$,
is always positive and monotone increasing, and asymptotic to
$5n^2/18$ as $n\to\infty$. This completes the proof of  Theorem \ref{noperf}.

\end{document}